\newtheorem{theorem}{Theorem}[section]
\newtheorem{definition}[theorem]{Definition}
\newtheorem{proposition}[theorem]{Proposition}
\newtheorem{lemma}[theorem]{Lemma}
\newtheorem{remark}[theorem]{Remark}
\numberwithin{equation}{section}
\renewcommand{\gg}{\gamma}
\newcommand{\rest}{\restriction}
\newcommand{\la}{\langle}
\newcommand{\ra}{\rangle}
\newcommand{\cp}{{\rm cp }}
\newcommand{\lh}{{\rm lh}}
\def\k{\kappa}
\def\a{\alpha}
\def\b{\beta}
\def\d{\delta}
\def\l{\lambda}
\renewcommand{\models}{\vDash}
\newcommand{\powerset}{{\cal P}}
\def\P{{\mathcal{P} }}
\def\W{{\mathcal{W} }}
\def\Q{{\mathcal{ Q}}}
\def\K{{\mathcal{ K}}}
\def\R{{\mathcal R}}
\def\M{{\mathcal{M}}}
\def\N{{\mathcal{N}}}
\def\F{{\mathcal{F}}}
\def\T {{\mathcal{T}}}
\def\U{{\mathcal{U}}}
\def\S{{\mathcal{S}}}
\def\F{{\mathcal{F}}}
\def\VT{{\vec{\mathcal{T}}}}
\newcommand{\rthm}[1]{Theorem~\ref{#1}}
\newcommand{\rdef}[1]{Definition~\ref{#1}}
\newcommand{\rfig}[1]{Figure~\ref{#1}}
\begin{document}
\title{HOD in Natural Models of $\textsf{AD}^+$}
\date{\today}
\author{Nam Trang\\
Department of Mathematical Sciences\\
Carnegie Mellon University\\
namtrang@andrew.cmu.edu.}
\maketitle
\begin{abstract}
The goal of this paper is to compute the full HOD of models of $\textsf{AD}^+$ of the form $L(\powerset(\mathbb{R}))$ below $``\textsf{AD}_\mathbb{R} +\Theta$ is regular". As part of this computation, we give a computation of HOD$|\Theta$ left open in \cite{ATHM} for $\Theta$ a successor in the Solovay sequence.
\end{abstract}
\thispagestyle{empty}
\normalsize

Throughout this paper, unless stated otherwise, we assume $V=L(\powerset(\mathbb{R}))+ \textsf{AD}^+$ + no $\textsf{AD}^+$ models $M$ containing $\mathbb{R}\cup \textrm{OR}$ satisfying $``\textsf{AD}_\mathbb{R} + \Theta$ is regular"\footnote{Under our hypothesis, ``Strong Mouse Capturing" ($\textsf{SMC}$) holds. This notion will be introduced in Section 1.}. We call this assumption (\textasteriskcentered). Under (\textasteriskcentered), we analyze full \textrm{HOD}, extending the analysis in \cite{ATHM}. Our smallness assumption is made because of the fact that for our computation, we'll rely heavily on the theory of hod mice, which is developed in \cite{ATHM} for models satisfying the assumption.
\\
\indent To put this work in a proper context, we recall a bit of history on the computation of \textrm{HOD}. In $L(\mathbb{R})$ under $\textsf{AD}$\footnote{It's known that if $L(\mathbb{R})\vDash \textsf{AD}$ then $L(\mathbb{R})\vDash \textsf{AD}^+$.}, Harrington and Kechris show that \textrm{HOD} $\vDash \textsf{CH}$. Let $\kappa = \omega_1^{L(\mathbb{R})}$. Solovay shows that \textrm{HOD} $\vDash \kappa$ is measurable and Becker shows $\kappa$ is the least measurable in \textrm{HOD}. These were shown using descriptive set theory. Then Steel in \cite{steel2010outline} or \cite{steel03} using inner model theory shows $V^{\textrm{HOD}}_\Theta$ is a fine-structural mouse, which in particular implies $V^{\textrm{HOD}}_\Theta\vDash \textsf{GCH}$. Woodin (see \cite{WoodinHod}), building on Steel's work, completes the full \textrm{HOD} analysis in $L(\mathbb{R})$ and shows \textrm{HOD} $\vDash \textsf{GCH}$ and furthermore shows that the full \textrm{HOD} of $L(\mathbb{R})$ is a hybrid mouse that contains some information about a certain iteration strategy of its initial segments. A key fact used in the computation of \textrm{HOD} in $L(\mathbb{R})$ is that if $L(\mathbb{R})\vDash \textsf{AD}$ then $L(\mathbb{R})\vDash \textsf{MC}\footnote{\textsf{MC} stands for Mouse Capturing, which is the statement that if $x,y\in \mathbb{R}$, then $x\in OD(y) \Leftrightarrow x$ is in a mouse over $y$.}$. It's natural to ask whether analogous results hold in the context of $\textsf{AD}^+ + V=L(\powerset(\mathbb{R}))$ as the \textrm{HOD} computation is an integral part of the structural analysis of $\textsf{AD}^+$ models and plays an important role in applications such as the core model induction. Woodin has shown that under this assumption \textrm{HOD} $\vDash \textsf{CH}$. Recently, Grigor Sargsyan in \cite{ATHM}, assuming (\textasteriskcentered), proves Strong Mouse Capturing (S\textsf{MC}) (a generalization of \textsf{MC}) and computes $V^{\textrm{HOD}}_\Theta$ for $\Theta$ being limit in the Solovay sequence and $V^{\textrm{HOD}}_{\theta_\alpha}$ for $\Theta = \theta_{\alpha+1}$ in a similar sense as above under the assumption (\textasteriskcentered). 
\\
\indent This paper extends Sargsyan's work to the computation of full \textrm{HOD} under (\textasteriskcentered). There are two main cases. We show that if $\Theta$ is $\theta_0$ or is a successor in the Solovay sequence, \textrm{HOD} is of the form $L[\mathcal{M}_\infty][\Sigma_\infty]$, where $\mathcal{M}_\infty$ is a fine structural premouse (hybrid premouse if $\Theta > \theta_0$) extending \textrm{HOD}$|\Theta$. The definition of $\M_\infty$ will be spelled out in detail during the course of the paper. $\Sigma_\infty$ is a fragment of the strategy for $\mathcal{M}_\infty|\Theta$ on (finte stacks of) normal trees in $\mathcal{M}_\infty$. For clarity, we devote the entire Section 2 to the computation of \textrm{HOD} for $\textsf{AD}^+$ models satisfying $\Theta = \theta_0$. In Section 3, we bring in the machinery of hod mice developed in \cite{ATHM} and combine it with techniques of Section 2 to compute \textrm{HOD} for $\textsf{AD}^+$ models satisfying $\Theta = \theta_{\alpha+1}$ for some $\alpha$. Though in addition to (\textasteriskcentered), we need an additional assumption; this assumption is explained in Section 3. The case $\Theta$ is a limit in the Solovay sequence, i.e. $\Theta = \theta_\alpha$ for some limit $\alpha$, is dealt with in Section 4. There the $\textrm{HOD}$ computation is split into two cases depending on whether or not $\textrm{HOD} \vDash \textrm{cof}(\Theta)$ is measurable. 
\\
\indent This work is done when the author is a graduate student at UC Berkeley under the supervision of Professor John Steel. The author would like to thank him for suggesting this topic, his patience, and numerous helpful advice during the course of this project. The extent to which this paper is in debt to Grigor Sargsyan's work on hod mice will be apparent in Chapters 3 and 4. The author would also like to thank him for numerous suggestions and corrections on an older version of this paper.
\section{Backgrounds}
\subsection{Basic facts about $\textsf{AD}^+$ and hod mice}
\noindent \index{\textsf{AD}$^+$} We start with the definition of Woodin's theory of $\textsf{AD}^+$. In this paper, we identify $\mathbb{R}$ with $\omega^{\omega}$. We use $\Theta$ to denote the sup of ordinals $\alpha$ such that there is a surjection $\pi: \mathbb{R} \rightarrow \alpha$.
\begin{definition}
\label{AD+}
$\textsf{AD}^+$ is the theory $\textsf{ZF} + \textsf{AD} + \textsf{DC}_{\mathbb{R}}$ and 
\begin{enumerate}
\item for every set of reals $A$, there are a set of ordinals $S$ and a formula $\varphi$ such that $x\in A \Leftrightarrow L[S,x] \vDash \varphi[S,x]$. $(S,\varphi)$ is called an $\infty$-Borel code\index{$\infty$-Borel code} for $A$;
\item for every $\lambda < \Theta$, for every continuous $\pi: \lambda^\omega \rightarrow \omega^{\omega}$, for every $A \subseteq \mathbb{R}$, the set $\pi^{-1}[A]$ is determined.
\end{enumerate}
\end{definition}
\noindent \textsf{AD}$^+$ is arguably the right structural strengthening of \textsf{AD}. In fact, \textsf{AD}$^+$ is equivalent to ``$\textsf{AD}\ +\ $the set of Suslin cardinals is closed" (see \cite{ketchersid2011more}). Another, perhaps more useful, equivalence of $\textsf{AD}^+$ is ``$\textsf{AD} + \Sigma_1$ statements reflect to Suslin-co-Suslin" (see \cite{steelderived} for a more precise statement).
\begin{definition}[\textsf{AD}$^+$]
\label{Solovaysequence}
The \textbf{Solovay sequence} is the sequence $\langle\theta_\alpha \ | \ \alpha \leq \Omega\rangle$ where
\begin{enumerate}
\item $\theta_0$ is the sup of ordinals $\beta$ such that there is an $OD$ surjection from $\mathbb{R}$ onto $\beta$;
\item if $\alpha>0$ is limit, then $\theta_\alpha = \sup\{\theta_\beta \ | \ \beta<\alpha\}$;
\item if $\alpha =\beta + 1$ and $\theta_\beta < \Theta$ (i.e. $\beta < \Omega$), fixing a set $A\subseteq \mathbb{R}$ of Wadge rank $\theta_\beta$, $\theta_\alpha$ is the sup of ordinals $\gamma$ such that there is an $OD(A)$ surjection from $\mathbb{R}$ onto $\gamma$, i.e. $\theta_\alpha = \theta_A$.
\end{enumerate}
\end{definition}
Note that the definition of $\theta_\alpha$ for $\alpha = \beta+1$ in Definition \ref{Solovaysequence} does not depend on the choice of $A$. We recall some basic notions from descriptive set theory.
\\
\indent Suppose $A\subseteq \mathbb{R}$ and $(N, \Sigma)$ is such that $N$ is a transitive model of ``$\textsf{\textsf{ZF}C}-Replacement$" and $\Sigma$ is an $(\omega_1, \omega_1)$-iteration strategy or just $\omega_1$-iteration strategy for $N$. We use $o(N)$, $\textrm{OR}^N$, $\textrm{ORD}^N$ interchangably to denote the ordinal height of $N$. Suppose that $\d$ is countable in $V$ but is an  uncountable cardinal of $N$ and suppose that $T, U\in N$ are trees on $\omega\times (\d^+)^N$. We say $(T,U)$ \textit{locally Suslin captures  $A$ at $\delta$} over $N$ if for any $\a\leq\d$ and for $N$-generic $g\subseteq Coll(\omega, \a)$,
\begin{center}
  $A\cap N[g]=p[T]^{N[g]} = \mathbb{R}^{N[g]} \backslash p[U]^{N[g]}$.
\end{center}
We also say that $N$ locally Suslin captures $A$ at $\d$.
We say that $N$ locally captures $A$ if $N$ locally captures $A$ at any uncountable cardinal of $N$. We say $(N, \Sigma)$ \textit{Suslin captures}\index{Suslin capturing} $A$ at $\delta$, or $(N, \d, \Sigma)$  \textit{Suslin captures}\index{Suslin capturing} $A$, if there are trees $T, U\in N$ on $\omega\times (\d^+)^N$ such that whenever $i: N\rightarrow M$ comes from an iteration via $\Sigma$, $(i(T), i(U))$ locally Suslin captures $A$ over $M$ at $i(\d)$. In this case we also say that $(N, \d, \Sigma, T, U)$ Suslin captures $A$. We say $(N, \Sigma)$ Suslin captures $A$ if for every countable $\d$ which is an uncountable cardinal of $N$, $(N, \Sigma)$ Suslin captures $A$ at $\d$. When $\d$ is Woodin in $N$, one can perform genericity iterations on $N$ to make various objects generic over an iterate of $N$. This is where the concept of Suslin capturing becomes interesting and useful. We'll exploit this fact on several occasions.

We say that $\Gamma$ is a \textbf{good pointclass}\index{good pointclass} if it is closed under recursive preimages, closed under $\exists^{\mathbb{R}}$, is $\omega$-parametrized, and has the scale property. Furthermore, if $\Gamma$ is closed under $\forall^\mathbb{R}$, then we say that $\Gamma$ is \textbf{inductive-like}\index{inductive-like pointclass}.

\begin{theorem}[Woodin, Theorem 10.3 of \cite{DMATM}]\label{n*x} Assume $ \textsf{AD}^+$ and suppose $\Gamma$ is an inductive-like pointclass and is not the last inductive-like pointclass. There is then a function $F$ defined on $\mathbb{R}$ such that for a Turing cone of $x$, $F(x)=\la \N^*_x, \M_x, \d_x, \Sigma_x\ra$ such that
\begin{enumerate}
\item $\N^*_x|\d_x=\M_x| \d_x$,
\item $\N^*_x\models ``\textsf{ZF} + \d_x$ is the only Woodin cardinal",
\item $\Sigma_x$ is the unique iteration strategy of $\M_x$,
\item $\N^*_x= L(\M_x, \Lambda)$ where $\Lambda$ is the restriction of $\Sigma_x$ to stacks $\VT\in \M_x$ that have finite length and are based on $\M_x\rest \d_x$,
\item $(\N^*_x, \Sigma_x)$ Suslin captures $\Gamma$,
\item for any $\a<\d_x$ and for any $\N^*_x$-generic $g\subseteq Coll(\omega, \a)$, $(\N^*_x[g], \Sigma_x)$ Suslin captures $Code((\Sigma_x)_{\M_x\rest\a})$ and its complement at $\d_x^+$.
\end{enumerate}
\end{theorem}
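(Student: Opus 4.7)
The plan is to build $\M_x$ by a fully-backgrounded $K^c$-style construction relative to Suslin representations of a universal $\Gamma$ set, take $\mathcal{N}^*_x$ to be the closure of $\M_x$ under the fragment $\Lambda$ of its strategy acting on trees based on $\M_x\rest\d_x$, and then verify the Suslin capturing clauses by genericity iterations. Because $\Gamma$ is inductive-like and has the scale property, we fix a universal $\Gamma$ set $A$ with a $\Gamma$-scale. Since $\Gamma$ is \emph{not} the last inductive-like pointclass, the dual pointclass $\check\Gamma$ is also scaled, so both $A$ and $\mathbb{R}\setminus A$ admit Moschovakis trees $T,U$ on $\omega\times\kappa$ with $A=p[T]$ and $\mathbb{R}\setminus A=p[U]$ for some $\kappa<\Theta$. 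These trees are the raw material for the background condition.

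For each real $x$, I would run a backgrounded $L[\vec E]$-construction relative to $x$ inside $\mathrm{HOD}_{T,U,x}$, requiring that each extender placed on the sequence be certified by a $V$-extender whose ultrapower map cohered with $(T,U)$. The resulting model, if the construction reaches a Woodin cardinal, is declared to be $\M_x$ with Woodin cardinal $\d_x$; the background condition automatically delivers, inside $\M_x$, trees capturing $A$ at every cardinal below $\d_x$, which is (6) before passing to $\N^*_x$. Clause (1) is then arranged by fiat by cutting $\N^*_x$ off at the first level projecting to or below $\d_x$ above $\M_x|\d_x$, and clause (2) follows because the construction places no extenders with critical point above $\d_x$.

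The main obstacle is clause (3) together with showing that the construction actually produces a Woodin cardinal for a Turing cone of $x$. For a cone of $x$, if the construction stalled below any Woodin level one could, using Woodin's derived-model style arguments, read off from the resulting ``lower part'' mouse a scale on a set properly beyond $\Gamma$ that is $\mathbf{OD}(x)$, whence its Wadge rank would be bounded by the next inductive-like pointclass supplied by the non-finality hypothesis; pushing this scale back to $V$ via a Turing-cone / Solovay-measure argument yields the contradiction that $\Gamma$ itself was the last inductive-like pointclass. Uniqueness of $\Sigma_x$ (clause (3)) is then extracted from a branch-condensation/Dodd--Jensen argument: any two $\omega_1$-strategies would compare to a common iterate, and Dodd--Jensen forces them to agree on the comparison, hence on all countable trees by the usual absoluteness.

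Finally, set $\Lambda$ to be $\Sigma_x$ restricted to finite stacks of normal trees in $\M_x$ based on $\M_x\rest\d_x$, and $\N^*_x=L(\M_x,\Lambda)$; since $\Lambda$ is coded by a set of ordinals definable from $\M_x$, clause (4) is immediate, and (2) holds in $\N^*_x$ because $L(\M_x,\Lambda)$ adds no new extenders. For (5) and the generic-extension version (6), I would use that $(T,U)$ locally capture $A$ (resp.\ the code of $(\Sigma_x)_{\M_x\rest\a}$ after a collapse) over every $\Sigma_x$-iterate of $\M_x$: any $\Sigma_x$-iteration map sends $(T,U)$ to a pair with the same local capturing property, so by genericity iterations (using that $\d_x$ is Woodin in $\N^*_x$) one shows that on any $\N^*_x$-generic extension by $\mathrm{Coll}(\omega,\a)$ the image trees compute $A\cap\N^*_x[g]$, which is exactly Suslin capturing at $\d_x$ respectively $\d_x^+$.
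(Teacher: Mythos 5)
The paper does not prove Theorem~\ref{n*x} at all: it is quoted verbatim as ``Woodin, Theorem 10.3 of \cite{DMATM}'' and used as a black box throughout, so there is no in-paper argument to compare against. What you have written is therefore a reconstruction of Steel's/Woodin's proof in the Derived Model Theorem notes, and while the broad plan (backgrounded construction inside a coarse structure built from Suslin representations, $\N^*_x=L(\M_x,\Lambda)$, genericity iterations to verify capturing, a cone argument that the construction reaches a Woodin, with non-finality of $\Gamma$ providing the slack) is indeed the right shape, there are several substantive slips.

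First, the claim that ``the dual pointclass $\check\Gamma$ is also scaled'' is false: for an inductive-like $\Gamma$ with the scale property, $\check\Gamma$ does not have the scale property (think of $\textup{coIND}$ or $\Pi^2_1$). What the non-finality hypothesis actually gives is a strictly larger inductive-like $\Gamma'$ with $\check\Gamma\subseteq\Delta_{\Gamma'}$; since $\Gamma'$ has the scale property, $\mathbb{R}\setminus A$ is Suslin, and \emph{that} is where the co-tree $U$ comes from. The conclusion you want is true, but your reason for it is wrong and should not be left as is. Second, you say the background condition ``delivers \ldots trees capturing $A$ at every cardinal below $\d_x$, which is (6)''; but clause (6) is not about $A$ or $\Gamma$ at all, it is about Suslin capturing the \emph{code of the strategy fragment} $(\Sigma_x)_{\M_x\rest\a}$ over $\N^*_x[g]$. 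This requires knowing the descriptive complexity of the short-tree/$Q$-structure-guided strategy of the lower levels and is a genuinely separate verification, not a corollary of the background condition (which only handles (5)). Third, uniqueness of $\Sigma_x$ in clause (3) is not extracted from Dodd--Jensen in the way you describe: Dodd--Jensen gives minimality of copy maps along a fixed strategy, not that two distinct $(\omega,\omega_1+1)$-strategies must coincide. What actually forces uniqueness is that $\M_x$ is set up so that the correct branch at each limit stage is determined by a $Q$-structure in $Lp^\Gamma(\M(\T))$, and this $Q$-structure is unique. Finally, the ``stalled construction reads off a scale beyond $\Gamma$'' step is the real heart of the theorem and is only gestured at; as written it would not survive contact with a reader who did not already know the argument.

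The paper does not prove Theorem~\ref{n*x} at all: it is quoted verbatim as ``Woodin, Theorem 10.3 of \cite{DMATM}'' and used as a black box throughout, so there is no in-paper argument to compare against. Your outline is a reconstruction of the proof in Steel's derived model theorem notes, and the broad plan --- backgrounded construction inside a coarse structure built from Suslin representations of a universal $\Gamma$ set, setting $\N^*_x=L(\M_x,\Lambda)$, genericity iterations to verify Suslin capturing, and a cone argument that the construction reaches a Woodin cardinal using the non-finality of $\Gamma$ --- is the right shape.

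However, there are several substantive slips. First, the claim that ``the dual pointclass $\check\Gamma$ is also scaled'' is false: for an inductive-like $\Gamma$ with the scale property, $\check\Gamma$ generally does \emph{not} have the scale property (think of $\textup{coIND}$ or $\Pi^2_1$). What the non-finality hypothesis actually gives is a strictly larger inductive-like $\Gamma'$ with $\check\Gamma\subseteq\utilde{\Delta}_{\Gamma'}$; since $\Gamma'$ has the scale property, $\mathbb{R}\setminus A$ is Suslin, and \emph{that} is where the co-tree $U$ comes from. You reach the right conclusion for the wrong reason. Second, you assert that the background condition ``delivers \ldots trees capturing $A$ at every cardinal below $\d_x$, which is (6)''; but clause (6) is not about $A$ or $\Gamma$ at all --- it concerns Suslin capturing the code of the \emph{strategy fragment} $(\Sigma_x)_{\M_x\rest\a}$ over $\N^*_x[g]$. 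That requires a separate argument about the descriptive complexity of the $Q$-structure-guided strategy of the lower levels of $\M_x$, and is not a corollary of the background condition (which only handles clause (5)). Third, uniqueness of $\Sigma_x$ in clause (3) is not obtained from Dodd--Jensen in the way you describe: Dodd--Jensen gives minimality of copy maps along a fixed strategy, not that two distinct $(\omega,\omega_1+1)$-strategies must coincide. Uniqueness here is really because the correct branch at each limit stage is pinned down by a $Q$-structure in $Lp^{\Gamma}(\M(\T))$, which is unique. Finally, the ``stalled construction reads off a scale beyond $\Gamma$'' step is the technical heart of the theorem, resting on mouse-set and envelope machinery, and as gestured at here it would not carry a reader who did not already know the argument.
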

\begin{theorem}[Woodin, unpublished but see \cite{steelderived}]\label{fundamental result of ad+} Assume $ \textsf{AD}^++V= L(\powerset(\mathbb{R}))$. Suppose $A$ is a set of reals such that there is a Suslin cardinal in the interval $(w(A), \theta_A)$. Then
\begin{enumerate}
\item The pointclass $\undertilde{\Sigma^2_1}(A)$ has the scale property.
\item $M_{\utilde{\Delta}^2_1(A)}\prec_{\Sigma_1} L(\powerset(\mathbb{R}))$.
\item $L_{\Theta}(\powerset(\mathbb{R}))\prec_{\Sigma_1} L(\powerset(\mathbb{R}))$.
\end{enumerate}
\end{theorem}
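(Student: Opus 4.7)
The plan is to prove the three items roughly in sequence, since the scale property (1) is the main technical input and yields (2) and (3) as reflection consequences. The Suslin-cardinal hypothesis on the interval $(w(A),\theta_A)$ supplies a scaled pointclass $\Gamma$ whose Suslin cardinal $\kappa$ satisfies $w(A)<\kappa<\theta_A$; such a $\Gamma$ exists because under $\textsf{AD}^+$ the class of Suslin cardinals is closed and every Suslin cardinal is the scale cardinal of some scaled pointclass. In particular $A,\neg A\in\Gamma$, so both admit trees on $\omega\times\kappa$. I would then write any $\undertilde{\Sigma^2_1}(A)$ set in the form $\{x:\exists S\,\varphi(x,A,S)\}$ using $\infty$-Borel codes from Definition \ref{AD+}, replace the quantifier over sets of reals by a quantifier over Suslin representations provided by $\kappa$, and run the Moschovakis envelope/periodicity construction against the tree in $\Gamma$ to manufacture norms. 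The output is an $\undertilde{\Sigma^2_1}(A)$-scale whose norms take ordinal values bounded by $\Theta$.

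For (2), the scale property from (1) yields uniformization for $\undertilde{\Sigma^2_1}(A)$. Given a $\Sigma_1$ formula $\phi$ and a parameter $p\in M_{\utilde{\Delta}^2_1(A)}$ with $L(\powerset(\mathbb{R}))\models\phi[p]$, extract a set of reals $B$ of least Wadge rank that codes a $\Sigma_0$-witness for $\phi$. Minimality forces both $B$ and its complement to be $\undertilde{\Sigma^2_1}(A)$, hence $B$ is $\utilde{\Delta}^2_1(A)$, which places the reflected witness inside $M_{\utilde{\Delta}^2_1(A)}$. For (3), given a $\Sigma_1$ formula $\phi$ and $p\in L_\Theta(\powerset(\mathbb{R}))$, choose a set of reals $A_0$ of Wadge rank large enough that $p$ is definable from $A_0$ and parameters in $\mathbb{R}\cup\textrm{OR}$, and such that some Suslin cardinal lies in $(w(A_0),\theta_{A_0})$; such $A_0$ exists by the closure and unboundedness of the Suslin cardinals below $\Theta$. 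Apply (2) to $A_0$ to place a witness in $M_{\utilde{\Delta}^2_1(A_0)}\subseteq L_\Theta(\powerset(\mathbb{R}))$.

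The main obstacle is item (1), the construction of scales on $\undertilde{\Sigma^2_1}(A)$ sets. The difficulty is that the second-order existential quantifier in the definition of $\undertilde{\Sigma^2_1}$ ranges over arbitrary sets of reals, so the Moschovakis periodicity theorem does not apply out of the box. The role of the Suslin cardinal hypothesis is precisely to bound the complexity of witnesses at $\kappa$, so that this quantifier can be absorbed into a game quantifier on trees lying in $\Gamma$. Setting up this absorption correctly, and verifying that the resulting norms remain $\undertilde{\Sigma^2_1}(A)$-definable rather than creeping up one level in complexity, is the delicate point of the whole argument.
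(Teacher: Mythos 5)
The paper does not prove this theorem; it is stated as a cited result of Woodin's, with the reader referred to \cite{steelderived}. So there is no ``paper proof'' against which to compare your sketch step by step; I can only assess the sketch on its own terms, and there I see real problems.

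The most concrete gap is in your derivation of (3) from (2). You want to choose $A_0$ of Wadge rank large enough to capture the parameter $p$ and such that a Suslin cardinal lies in $(w(A_0),\theta_{A_0})$, justified by ``closure and unboundedness of the Suslin cardinals below $\Theta$.'' But Suslin cardinals are not in general unbounded below $\Theta$: under $\textsf{AD}^++V=L(\powerset(\mathbb{R}))$ they are unbounded in $\Theta$ precisely when $\textsf{AD}_\mathbb{R}$ holds, and in the case $\Theta=\theta_{\alpha+1}$ the largest Suslin cardinal is $\theta_\alpha<\Theta$. That successor case is exactly the setting of Section~3 of this paper, where the theorem is invoked. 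For a parameter $p$ of Wadge rank above $\theta_\alpha$ there is no admissible $A_0$, so (3) cannot be obtained as a corollary of (2); it needs a separate argument (a direct condensation/hull argument for $L_\Theta(\powerset(\mathbb{R}))$ using the Coding Lemma rather than the Suslin-cardinal hypothesis, which is absent from (3)).

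For (1), your description of ``replacing the quantifier over sets of reals by a quantifier over Suslin representations provided by $\kappa$'' is not an argument; it is a restatement of what needs to be proven. There is no a priori reason the witness $B$ to an existential second-order statement need be $\kappa$-Suslin, and the delicate step you flag (that the norms stay $\undertilde{\Sigma^2_1}(A)$) is not the only delicate step: the localization of the witness itself requires the envelope/companion machinery or the derived model theorem, neither of which appears in your sketch. Similarly in (2), ``minimality forces both $B$ and its complement to be $\undertilde{\Sigma^2_1}(A)$'' is asserted but not justified; a set of least Wadge rank with a given $\Sigma^2_1(A)$ property need not be $\utilde{\Delta}^2_1(A)$ on minimality grounds alone. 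The actual placement of a witness inside $\utilde{\Delta}^2_1(A)$ uses the scale property and the attendant boundedness of $\Sigma^2_1(A)$ prewellorderings, which is doing the real work here and should be stated explicitly.
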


We quote another theorem of Woodin, which will be key in our HOD analysis.
\begin{theorem}[Woodin, see \cite{koellner2010large}]
\label{succThetaWoodin}
Assume $\textsf{AD}^+$. Let $\langle\theta_\alpha \ | \ \alpha \leq \Omega\rangle$ be the Solovay sequence. Suppose $\alpha = 0$ or $\alpha = \beta+1$ for some $\beta < \Omega$. Then $HOD \vDash \theta_\alpha$ is Woodin.
\end{theorem}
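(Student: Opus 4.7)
The plan is to follow Woodin's original argument. I will describe the case $\alpha=0$ in some detail; the case $\alpha=\beta+1$ is handled by the same argument after relativizing to a set $A^{*}\subseteq\mathbb{R}$ of Wadge rank $\theta_\beta$, replacing $\text{OD}$ by $\text{OD}(A^{*})$ and $\text{HOD}$ by $\text{HOD}_{A^{*}}$ throughout, and using that $\theta_{\beta+1}=\theta_{A^{*}}$ is by definition the sup of $\text{OD}(A^{*})$-surjections from $\mathbb{R}$. As the first step, since every element of $\text{HOD}$ is $\text{OD}$, it suffices (by the Mitchell--Steel characterization of Woodinness) to show that for every $\text{OD}$ set $A\subseteq\theta_0$ and every $\lambda<\theta_0$, there is an $\text{OD}$ extender $E$ with $\text{crit}(E)=\kappa<\lambda$, $\text{lh}(E)>\lambda$, $j_E(A)\cap\lambda=A\cap\lambda$, and $V^{\text{HOD}}_\lambda\subseteq\text{Ult}(\text{HOD},E)$. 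The Moschovakis Coding Lemma, applied to any $\text{OD}$ prewellorder of $\mathbb{R}$ of length $\geq\lambda$ (which exists since $\lambda<\theta_0$), will be used freely below to pass between bounded subsets of $\theta_0$ and $\text{OD}$ sets of reals.

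Second, I build the extender $E$ from determinacy. Fix an $\text{OD}$ set $A\subseteq\theta_0$ and $\lambda<\theta_0$; choose $\kappa<\lambda$ to be a suitably closed $\text{OD}$ cardinal of $\text{HOD}$. For each finite $a\subseteq\lambda$ and each $\text{OD}$ set $X\subseteq[\kappa]^{|a|}$, declare $X\in U_a$ iff Player II wins an integer game whose payoff is read off uniformly from $\text{OD}$ codes for $a$, $\kappa$, $A$, and $X$. Determinacy yields that $U_a$ is an $\text{OD}$ ultrafilter on the $\text{OD}$ subsets of $[\kappa]^{|a|}$; uniformity of the definition in $a$ gives compatibility $U_b = U_a\!\upharpoonright\!b$ for $b\subseteq a$; and countable completeness follows from $\textsf{DC}_{\mathbb{R}}$ via the usual trick of packaging a countable sequence of $\text{OD}$ measure-one sets as a single real that a winning strategy for Player II can absorb. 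The resulting sequence $E=\langle U_a:a\in[\lambda]^{<\omega}\rangle$ is $\text{OD}$, and therefore lies in $\text{HOD}$.

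Third, I verify the strongness clauses. That $j_E(A)\cap\lambda=A\cap\lambda$ is a direct unwinding of the payoff, which I rig so that ordinals $<\lambda$ are ``measured" by $E$ exactly as by $A$. The closure $V^{\text{HOD}}_\lambda\subseteq\text{Ult}(\text{HOD},E)$ is more delicate: it follows because every element of $V^{\text{HOD}}_\lambda$ has an $\text{OD}$ code below $\theta_0$, and the ultrapower sees every $\text{OD}$ subset of the cubes $[\kappa]^{|a|}$ for $a\subseteq\lambda$, since we quantified over all such in the definition of $U_a$.

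The principal obstacle is in the second step, namely the precise design of the game so that the $U_a$ are countably complete ultrafilters on the $\text{OD}$ subsets of $[\kappa]^{|a|}$ that cohere into an extender of length $>\lambda$. The cleanest classical route uses the Moschovakis periodicity theorems (scales on pointclasses cofinal below $\theta_\alpha$) so that winning strategies and measure-one sets can always be taken $\text{OD}$. A modern alternative bypasses the direct game construction entirely: use Theorem \ref{n*x} to extract from an $\infty$-Borel code for $A$ a pair $(\mathcal{N}^{*}_x,\Sigma_x)$ Suslin-capturing the relevant pointclass, read off extenders from the sequence of the $\mathcal{N}^{*}_x|\delta_x$, and push them forward into $\text{HOD}$ by means of the Suslin-capturing data.
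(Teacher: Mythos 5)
The paper does not prove this statement: it is quoted as a theorem of Woodin's, cited to Koellner--Woodin \cite{koellner2010large}, and used downstream as a black box. There is therefore no proof in the paper to compare your attempt against.

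Taken on its own terms, your sketch has the right overall shape (build $\textrm{OD}$ extenders from games, relativize for the successor step), but the mathematical content is missing or misplaced. Your step two --- ``declare $X\in U_a$ iff Player II wins an integer game whose payoff is read off uniformly from $\textrm{OD}$ codes for $a$, $\kappa$, $A$, and $X$'' --- is essentially a restatement of Woodin's theorem rather than a step of its proof: the real work is designing the payoff so that the game is determined (this is not automatic under $\textsf{AD}$ alone; one needs the payoff in a pointclass with scales, which is where periodicity and the structure of the pointclasses below $\theta_\alpha$ actually enter) and then proving coherence, countable completeness, and the strongness clauses, none of which you address. For $\alpha=\beta+1$, ``replacing $\textrm{HOD}$ by $\textrm{HOD}_{A^*}$ throughout'' would only show $\theta_{\beta+1}$ is Woodin in $\textrm{HOD}_{A^*}$; since $V_{\theta_{\beta+1}}^{\textrm{HOD}}$ need not equal $V_{\theta_{\beta+1}}^{\textrm{HOD}_{A^*}}$, one must argue separately that the witnessing extenders are $\textrm{OD}$ and not merely $\textrm{OD}(A^*)$, i.e.\ make the construction invariant in $A^*$, and this is a genuine step you omit. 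Finally, the ``modern alternative'' via \rthm{n*x} does not go through as sketched: $\N^*_x$ depends on the real $x$, its extender sequence is not $\textrm{OD}$, and converting extender information carried by such coarse mice into extenders on $\textrm{HOD}$ is precisely the hard content of the direct-limit HOD analysis in the body of this paper, which already presupposes the present theorem rather than providing a shortcut to it.
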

Next, we prove the following theorem of Woodin's which roughly states that \textrm{HOD} is coded into a subset of $\Theta$.
\begin{theorem}[Woodin]
\label{WoodinVopenka}
Assume $\textsf{AD}^+ + V = L(\powerset(\mathbb{R}))$. Then \textrm{HOD} $= L[P]$ for some $P \subseteq \Theta$ in \textrm{HOD}.
\end{theorem}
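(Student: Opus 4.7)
The plan is a Vopěnka-style coding. I will define a set $P\subseteq\Theta$ in $\textrm{HOD}$ that encodes the OD-truth of $V$ at parameters drawn from $\Theta$ together with OD sets of reals, and then verify $\textrm{HOD}=L[P]$.

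First I would enumerate the OD subsets of $\mathbb{R}$. Under $\textsf{AD}^+$, every OD set of reals has Wadge rank $<\Theta$, and for each Wadge degree an OD tiebreaker on definitions picks a canonical representative; this assembles into an OD surjection $\pi\colon\Theta\twoheadrightarrow\{A\subseteq\mathbb{R}\mid A\text{ is OD}\}$, which belongs to $\textrm{HOD}$ because it is OD.

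Next, fix an enumeration $\langle\phi_n\rangle_{n<\omega}$ of formulas in the language of set theory and define, working in $V$,
\[
P\;=\;\bigl\{\,\langle n,\vec\alpha,\vec\beta\rangle\;:\;n<\omega,\ \vec\alpha,\vec\beta\in\Theta^{<\omega},\ V\vDash\phi_n(\vec\alpha,\pi(\vec\beta))\,\bigr\},
\]
coded suitably as a subset of $\Theta$. Since $P$ is OD it lies in $\textrm{HOD}$, which immediately yields $L[P]\subseteq\textrm{HOD}$. For the reverse containment, take $X\in\textrm{HOD}$; then $X$ is OD in $V$. Since $V=L(\powerset(\mathbb{R}))$, the statement ``$y\in X$'' is $\Sigma_1$-expressible over $(L(\powerset(\mathbb{R})),\in)$ from $\powerset(\mathbb{R})$ together with ordinal parameters, and because $X$ is OD the relevant set-of-reals parameters may be chosen OD, hence in $\textrm{rge}(\pi)$. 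By the reflection $L_\Theta(\powerset(\mathbb{R}))\prec_{\Sigma_1}L(\powerset(\mathbb{R}))$ from Theorem \ref{fundamental result of ad+}, the ordinal parameters may also be pushed below $\Theta$, so ``$y\in X$'' is decided by a clause of $P$. An induction on the rank of $X$, coding each hereditarily OD element by its canonical OD defining tuple and applying the same reflection, then places $X$ inside $L[P]$.

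The hard part will be the reduction of arbitrary OD ordinal parameters to ones strictly below $\Theta$. Naively $X$ might be OD from some $\alpha\geq\Theta$; but any such $\alpha$ is itself OD, so iterating $\Sigma_1$-reflection and absorbing the definition of $\alpha$ into the theory already captured by $P$ reduces us to parameters $<\Theta$. Making this absorption precise, together with the rank induction that constructs every hereditarily OD set inside $L[P]$, is the technical heart of the argument; the key conceptual inputs are $V=L(\powerset(\mathbb{R}))$ (to get $\Sigma_1$-definability from $\powerset(\mathbb{R})$ and ordinals) and the reflection $L_\Theta(\powerset(\mathbb{R}))\prec_{\Sigma_1}L(\powerset(\mathbb{R}))$.
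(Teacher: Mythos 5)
Your proposal takes a genuinely different route from the paper. The paper runs an honest \emph{forcing} argument: it builds a Vop\v{e}nka-style poset $\mathbb{Q}\in\textrm{HOD}$ of size $\Theta$ (conditions are OD subsets of $\powerset(\alpha_0)\times\cdots\times\powerset(\alpha_n)$ for $\vec\alpha\in\Theta^{<\omega}$), defines in $\textrm{HOD}$ a $\mathbb{Q}$-name $\tau$ for an $\omega$-enumeration of $\cup_{\gamma<\Theta}\powerset(\gamma)$, shows that every $V$-generic for an auxiliary poset $\mathbb{P}$ induces a $\mathbb{Q}$-generic $G$ over $\textrm{HOD}$ with $\powerset(\mathbb{R})^V$ (hence $V$ itself, since $V=L(\powerset(\mathbb{R}))$ and every set of reals has an $\infty$-Borel code which is a bounded subset of $\Theta$) recoverable in $L[\mathbb{Q},\tau][G]$, and then uses the sandwich $L[\mathbb{Q},\tau]\subseteq\textrm{HOD}\subseteq L[\mathbb{Q},\tau][G]$ together with the fact that every $\mathbb{Q}$-condition is realized by some $V$-generic to conclude equality. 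You instead try to directly encode OD-truth of $V$ at parameters from $\Theta^{<\omega}$ (via the surjection $\pi$) into a single set $P\subseteq\Theta$.

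The gap in your argument is precisely the step you flag as the ``technical heart,'' and I don't think your proposed fix closes it. A set $X\in\textrm{HOD}$ is OD from some ordinal $\beta$, and nothing forces $\beta<\Theta$. Your set $P$ only records $V\vDash\phi_n(\vec\alpha,\pi(\vec\beta))$ for $\vec\alpha,\vec\beta\in\Theta^{<\omega}$, so ``$\delta\in X$'' for $X$ OD from a parameter $\geq\Theta$ (or for $\delta\geq\Theta$) is simply not a clause of $P$. Your absorption idea --- ``any such $\alpha$ is itself OD, so iterating $\Sigma_1$-reflection\ldots{} reduces us to parameters $<\Theta$'' --- does not work: the reflection $L_\Theta(\powerset(\mathbb{R}))\prec_{\Sigma_1}L(\powerset(\mathbb{R}))$ from Theorem \ref{fundamental result of ad+} is only $\Sigma_1$ and already requires all parameters to lie in $L_\Theta(\powerset(\mathbb{R}))$, so it gives no leverage to push an ordinal parameter from above $\Theta$ to below it; and the ``iteration'' of absorbing definitions has no termination guarantee, since an ordinal $\alpha\geq\Theta$ need not be OD from any ordinal $<\alpha$ (every ordinal is trivially OD from itself, which is why ``$\alpha$ is OD'' carries no information here). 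The paper's forcing argument sidesteps this entirely: because each condition $q\in\mathbb{Q}$ is met by some $V$-induced generic $G_g$, the value of ``$\dot V\vDash\phi(\check\alpha,\check\beta)$'' is decided by the empty condition for \emph{all} ordinals $\alpha,\beta$, and the forcing relation is computable inside $L[\mathbb{Q},\tau]$, which contains all ordinals. To repair your approach you would essentially have to reintroduce a forcing relation over $L[P]$ whose decidedness by $\emptyset$ is guaranteed by a density argument in $V$ --- at which point you have recreated the paper's Vop\v{e}nka argument.
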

\begin{proof}
First, let
\begin{equation*}
\mathbb{P} = \{(\vec{\alpha}, \vec{a}) \ | \ \vec{\alpha} = \langle\alpha_0,\alpha_1,...,\alpha_n\rangle \in \Theta^{<\omega}, \vec{a} = \langle a_0,a_1,...,a_n\rangle, \forall i \leq n (a_i \subseteq \alpha_i)\}.
\end{equation*}
$\mathbb{P}$ is a poset with the (obvious) order by extension. If $g$ is a  $\mathbb{P}$-generic over $V$ then $g$ induces an enumeration of order type $\omega$ of $(\Theta, \cup_{\gamma<\Theta} \powerset(\gamma))$. Now let
\begin{equation*}
\mathbb{Q}^* = \{(\vec{\alpha}, A) \ | \ \vec{\alpha} = \langle\alpha_0,\alpha_1,...,\alpha_n\rangle \in \Theta^{<\omega}, A \subseteq \powerset(\alpha_0)\times \powerset(\alpha_1)\times...\times\powerset(\alpha_n), A \in OD\}.
\end{equation*}
The ordering on $\mathbb{Q}^*$ is defined as follows: 
\begin{eqnarray*}
(\vec{\alpha},A) \leq (\vec{\beta},B) \Leftrightarrow \forall i < \textrm{dom}(\vec{\alpha}) \vec{\alpha}(i) = \vec{\beta}(i), B|(\powerset(\vec{\alpha}(0))\times...\times \powerset(\vec{\alpha}(\textrm{dom}(\alpha)-1)))\footnote{Suppose $m\leq n$ and $B\subseteq \powerset(\alpha_0)\times\dots \times \powerset(\alpha_n)$. Then $B|\powerset(\alpha_0)\times \dots \times \powerset(\alpha_m)$ is the set of $t\rest (m+1)$ for $t\in B$.} \subseteq A.
\end{eqnarray*}
There is a poset $\mathbb{Q} \in \textrm{HOD} \cap \powerset(\Theta)$ that is isomorphic to $\mathbb{Q}^*$ via an OD map $\pi$. For our convenience, whenever $p \in \mathbb{Q}$, we will write $p^*$ for $\pi(p)$. Furthermore, we can define $\pi$ so that elements of $\mathbb{Q}$ have the form $(\vec{\alpha}, A)$ whenever $p^* = (\vec{\alpha}, A^*)$. In other words, we can think of $\pi$ as a bijection of $\Theta$ and the set of $OD$ subsets of $\powerset(\alpha_0)\times\powerset(\alpha_1)\times...\times\powerset(\alpha_n)$ for $\alpha_0,\alpha_1,...,\alpha_n < \Theta$. For notational simplicity, if $p^* = (\vec{\alpha}, A^*)$, we write $o(p^*)$ for $\vec{\alpha}$ and $s(p^*)$ for $A^*$.
\\
\\
\noindent \textbf{Claim.} \textit{ Let $g$ be $\mathbb{P}$-generic over $V$. Then $g$ induces a $\mathbb{Q}$-generic $G_g$ over \textrm{HOD}. In fact, for any condition $q\in \mathbb{Q}$, we can find a $\mathbb{P}$-generic $g$ over $V$ such that $q\in G_g$ and $G_g$ is a $\mathbb{Q}$-generic over \textrm{HOD}.}
\begin{proof}
As mentioned above, $g$ induces a generic enumeration $f$ of $(\Theta, \cup_{\gamma<\Theta}\powerset(\gamma))$ of order type $\omega$. Furthermore, for each $n < \omega$, $f(n)_0 < \Theta$ and $f(n)_1 \subseteq f(n)_0$. Let
\begin{equation*}
G = \cup_{n<\omega}\{(\langle f(0)_0,...,f(n)_0\rangle, A) \in \mathbb{Q} \ | \ \langle f(0)_1,...,f(n)_1\rangle \in A^*\}.
\end{equation*}
We claim that $G$ is $\mathbb{Q}$-generic over $\textrm{HOD}$. To see this, let $D \subseteq \mathbb{Q}$, $D\in \textrm{HOD}$ be a dense set. Let $p = f|(n+1)$ for some $n$. It's enough to find a $q=(\langle\alpha_0,...,\alpha_m\rangle,\langle a_0,...,a_m\rangle) \in \mathbb{P}$ extending $p$ such that $D_q \cap D \neq \emptyset$ where
\begin{equation*}
D_q = \{ (\langle\alpha_0,...,\alpha_m\rangle,A) \ | \ \langle a_0,...,a_m\rangle \in A^*\}.
\end{equation*}
If no such $q$ exists, let $r = (\langle f(0)_0,...,f(n)_0\rangle,B)$, where
\begin{equation*}
b\in B^* \Leftrightarrow \forall t \in D \forall c(b^\smallfrown c \notin s(t)).
\end{equation*}
Then $r$ is a condition in $\mathbb{Q}$ with no extension in $D$. Contradiction.
\end{proof}
For each $\alpha<\Theta$, $n < \omega$, and $\langle\alpha_0,...,\alpha_n\rangle$, let $A_{\alpha,\langle\alpha_0,...,\alpha_n\rangle} = (\langle\alpha_0,...,\alpha_n\rangle,A)$ such that $\forall a\in A^*(\alpha \in a(n))$. We can then define a canonical term in $\textrm{HOD}$ for a generic enumeration of $\cup_{\gamma<\Theta}\powerset(\gamma)$. For each $n < \omega$, let $\sigma_n = \{(p,\check{\alpha}) \ | \ p \in \mathbb{Q}, p \leq A_{\alpha,\langle\alpha_0,...,\alpha_n\rangle}$ for some $\langle\alpha_0,...,\alpha_n\rangle \in \Theta^{n+1}$$\}$; let $\tau = \{ (A, \sigma_n) \ | \ n<\omega, A \in \mathbb{Q}\}$. Then it's easy to see that whenever $G$ is $\mathbb{P}$-generic over $\textrm{HOD}$ induced by a $\mathbb{P}$-generic over $V$, $\tau_G$ enumerates $\cup_{\gamma<\Theta}\powerset(\gamma)$ in order type $\omega$. This means we can recover $\powerset(\mathbb{R})^V$ in the model $L[\mathbb{Q},\tau][G]$ by $\textsf{AD}^+$ (here we only use the fact that every set of reals has an $\infty$-Borel code which is a bounded subset of $\Theta$).
\\
\indent To sum up, we have $L[\mathbb{Q},\tau] \subseteq \textrm{HOD} \subseteq L[\mathbb{Q},\tau][G]$ for some $\mathbb{Q}$-generic $G$ over $\textrm{HOD}$. By a standard argument, this implies that $L[\mathbb{Q},\tau] = \textrm{HOD}$.
\end{proof}
We summarize some definitions and facts about hod mice that will be used in our computation. For basic definitions and notations that we omit, see \cite{ATHM}. The formal definition of a hod premouse $\P$\index{hod premouse} is given in Definition 2.12 of \cite{ATHM}. Let us mention some basic first-order properties of $\P$. There are an ordinal $\lambda^\P$ and sequences $\langle(\P(\alpha),\Sigma^\P_\alpha) \ | \ \alpha < \lambda^\P\rangle$ and $\langle \delta^\P_\alpha \ | \ \alpha \leq \lambda^\P  \rangle$ such that 
\begin{enumerate}
\item $\langle \delta^\P_\alpha \ | \ \alpha \leq \lambda^\P  \rangle$ is increasing and continuous and if $\alpha$ is a successor ordinal then $\P \vDash \delta^\P_\alpha$ is Woodin;
\item $\P(0) = Lp_\omega(\P|\delta_0)^\P$; for $\alpha < \lambda^\P$, $\P(\alpha+1) = (Lp_\omega^{\Sigma^\P_\alpha}(\P|\delta_\alpha))^\P$; for limit $\alpha\leq \lambda^\P$, $\P(\alpha) = (Lp_\omega^{\oplus_{\beta<\alpha}\Sigma^\P_\beta}(\P|\delta_\alpha))^\P$;
\item $\P \vDash \Sigma^\P_\alpha$ is a $(\omega,o(\P),o(\P))$\footnote{This just means $\Sigma^\P_\alpha$ acts on all stacks of $\omega$-maximal, normal trees in $\P$.}-strategy for $\P(\alpha)$ with hull condensation;
\item if $\alpha < \beta < \lambda^\P$ then $\Sigma^\P_\beta$ extends $\Sigma^\P_\alpha$.
\end{enumerate}
We will write $\delta^\P$ for $\delta^\P_{\lambda^\P}$ and $\Sigma^\P=\oplus_{\beta<\lambda^\P}\Sigma^\P_{\beta+1}$.

\begin{definition}\label{hod pair}
$(\P, \Sigma)$ is a hod pair\index{hod pair} if $\P$ is a countable hod premouse and $\Sigma$ is a $(\omega,\omega_1,\omega_1)$ iteration strategy for $\P$ with hull condensation such that $\Sigma^\P\subseteq\Sigma$ and this fact is preserved by $\Sigma$-iterations.
\end{definition}
Hod pairs typically arise in $\textsf{AD}^+$-models, where $\omega_1$-iterability implies $\omega_1+1$-iterability. In practice, we work with hod pairs $(\P,\Sigma)$ such that $\Sigma$ also has branch condensation. 
\begin{theorem}[Sargsyan]
\label{branch condensation's consequences}
Suppose $(\P, \Sigma)$ is a hod pair such that $\Sigma$ has branch condensation. Then $\Sigma$ is pullback consistent, positional and commuting.
\end{theorem}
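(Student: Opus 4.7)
The plan is to establish the three properties in sequence---pullback consistency, then positionality, then commutativity---using branch condensation as the central tool and standard copying/lifting arguments to set it up. Each step reduces to showing that two branches of the same tree, chosen by $\Sigma$ or by a pulled-back/lifted version of $\Sigma$, must agree; branch condensation is precisely what makes this automatic once an appropriate hull embedding between the relevant trees is exhibited.

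First I would prove pullback consistency. Let $i: \P \to \Q$ be an iteration map coming from a stack $\VT$ via $\Sigma$, and let $\VU$ be a $\Sigma$-stack on $\P$. Write $i\VU$ for the copy of $\VU$ to a stack on $\Q$; by hull condensation of $\Sigma$, $i\VU$ is via $\Sigma_{\Q,\VT}$, the tail of $\Sigma$ after $\VT$. Proceed by induction on the length of $\VU$. At a limit stage, assume the extender and branch choices agree up to that stage, and let $b = \Sigma(\VU)$ while $c = \Sigma(\VT{}^\frown i\VU)$. The branch embeddings, the copy maps, and $i$ fit together to give a hull embedding from $\VU{}^\frown b$ into $(\VT{}^\frown i\VU){}^\frown c$, so branch condensation applied to $\Sigma$ forces the branches to match, i.e., $c$ is the image of $b$ under the copy. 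This is precisely what it means for $i^*\Sigma_{\Q,\VT} = \Sigma_\P$.

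Next, positionality. Suppose $\VT_0$ and $\VT_1$ are $\Sigma$-stacks on $\P$ with common last model $\Q$ and iteration maps $i_0, i_1$, and $\VU$ is a stack on $\Q$. I want $\Sigma(\VT_0{}^\frown \VU) = \Sigma(\VT_1{}^\frown \VU)$. Again induct on the length of $\VU$; at a limit stage let $b_0, b_1$ be the two candidate branches. Since the last model in either case is the same $\Q$ and $\VU$ is literally identical, the identity on $\VU$ together with $i_0$ and $i_1$ assembles, on each side, into a hull embedding witnessing that $b_0$ and $b_1$ are each hull-images of the other; branch condensation then forces $b_0 = b_1$. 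Equivalently, one can deduce positionality from pullback consistency by noting that both $i_0^*\Sigma_{\Q,\VT_0}$ and $i_1^*\Sigma_{\Q,\VT_1}$ equal $\Sigma_\P$, and then pushing forward via either $i_0$ or $i_1$ to conclude the two tail strategies on $\Q$ coincide.

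Finally, commutativity follows cleanly from the first two. Given $i: \P \to \Q$ from a $\Sigma$-stack $\VT$ and $j: \Q \to \R$ from a $\Sigma_\Q$-stack $\VU$, positionality says $\Sigma_\Q$ is unambiguous, so the concatenation $\VT{}^\frown \VU$ is a well-defined $\Sigma$-stack with resulting embedding $k: \P \to \R$; pullback consistency together with a routine induction on the extenders and branches used in $\VU$ shows $k = j \circ i$. The main obstacle throughout is organizing the copy and hull-embedding data carefully enough that branch condensation can be invoked cleanly at each limit stage without losing the agreement properties needed for the induction; this bookkeeping is the point at which hull condensation (built into the definition of hod pair) and branch condensation (the extra hypothesis) together yield the three desired global consequences.
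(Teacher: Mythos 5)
The paper does not actually contain a proof of this theorem; it simply cites Sargsyan's \cite{ATHM} and moves on, so there is no in-paper argument to compare against. Judged on its own terms, your sketch gets the general shape right---all three properties are indeed proved by arranging that branch condensation can be invoked at limit stages---but the sketch conflates two different hypotheses and, more seriously, has a circularity in positionality and trivializes commuting.

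Branch condensation is not about hull embeddings of trees; it requires producing a realization, that is, a $\Sigma$-iteration map $j:\P\to\R$ together with $\sigma:\M^{\VU}_b\to\R$ with $j=\sigma\circ i^{\VU}_b\circ i^{\VT}$. For pullback consistency, the copy maps alone do not hand you such a $\sigma$ with the required commutativity (the copying diagram goes the wrong way: one gets $\pi_b\circ i^{\VU}_b = i^{i\VU}_b\circ i$, and a left inverse to $\pi_b$ is not available), so saying ``the copy maps fit together to give a hull embedding, so branch condensation forces the branches to match'' is not yet an argument. For positionality, the natural way to build a realization for $\VT_0{}^\frown\VU$ that certifies $b_1$ would take $\R=\M^{\VU}_{b_1}$, $j=i^{\VU}_{b_1}\circ i^{\VT_1}$, $\sigma=\mathrm{id}$, but then $j=\sigma\circ i^{\VU}_{b_1}\circ i^{\VT_0}$ requires $i^{\VT_0}=i^{\VT_1}$, which is precisely (a case of) commuting---the thing you prove last. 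The alternative route via ``push forward the pullback'' has the same problem: $i_0^{*}\Sigma_{\Q,\VT_0}=\Sigma_{\P}=i_1^{*}\Sigma_{\Q,\VT_1}$ are pullbacks along two different maps, and pushing forward recovers the strategy only on copied trees, not on arbitrary stacks on $\Q$. Finally, your commuting step establishes only $i^{\VT{}^\frown\VU}=i^{\VU}\circ i^{\VT}$, which is the definitional composition rule for stacks; the actual content of commuting is that iteration maps to a common iterate do not depend on which $\Sigma$-stack is used to reach it. The piece your outline is missing---and the one the proof in \cite{ATHM} leans on throughout---is a Dodd-Jensen-style comparison: one further iterates the relevant models by the appropriate tail strategies to a common iterate, and the realization maps needed to invoke branch condensation are produced by that comparison. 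Without that machinery, none of the three arguments closes.
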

The proof of \rthm{branch condensation's consequences} can be found in \cite{ATHM}. Such hod pairs are particularly important for our computation as they are points in the direct limit system giving rise to \textrm{HOD}. For hod pairs $(\M_\Sigma, \Sigma)$, if $\Sigma$ is a strategy with branch condensation and $\VT$ is a stack on $\M_\Sigma$ with last model $\N$, $\Sigma_{\N, \VT}$ is independent of $\VT$. Therefore, later on we will omit the subscript $\VT$ from $\Sigma_{N, \VT}$ whenever $\Sigma$ is a strategy with branch condensation and $\M_\Sigma$ is a hod mouse.

\begin{definition} Suppose $\P$ and $\Q$ are two hod premice. Then $\P\trianglelefteq_{hod}\Q$\index{$\trianglelefteq_{hod}$} if there is $\a\leq\l^\Q$ such that $\P=\Q(\a)$.
\end{definition}

If $\P$ and $\Q$ are hod premice such that $\P\trianglelefteq_{hod}\Q$ then we say $\P$ is a hod initial segment of $\Q$\index{hod initial segment}. If $(\P, \Sigma)$ is a hod pair, and $\Q\trianglelefteq_{hod} \P$, say $\Q = \P(\alpha)$, then we let $\Sigma_\Q$ be the strategy of $\Q$ given by $\Sigma$. Note that $\Sigma_\Q\cap \P = \Sigma^\P_\alpha \in \P$. \\
\indent All hod pairs $(\P, \Sigma)$ have the property that $\Sigma$ has hull condensation and therefore, mice relative to $\Sigma$ make sense. To state the Strong Mouse Capturing we need to introduce the notion of $\Gamma$-fullness preservation. We fix some reasonable coding (we call Code) of $(\omega,\omega_1,\omega_1)$-strategies by sets of reals. Suppose $(\P,\Sigma)$ is a hod pair. Let $I(\P,\Sigma)$\index{$I(\P,\Sigma)$} be the set $(\Q,\Sigma_\Q,\vec{\mathcal{T}})$ such that $\vec{\mathcal{T}}$ is according to $\Sigma$ such that $i^{\vec{\mathcal{T}}}$ exists and $\Q$ is the end model of $\vec{\mathcal{T}}$ and $\Sigma_\Q$ is the $\vec{\mathcal{T}}$-tail of $\Sigma$. Let $B(\P,\Sigma)$\index{$B(\P,\Sigma)$} be the set $(\Q,\Sigma_\Q,\vec{\mathcal{T}})$ such that there is some $\R$ such that $\Q = \R(\alpha)$, $\Sigma_\Q=\Sigma_{\R(\alpha)}$ for some $\alpha<\lambda^\R$ and $(\R,\Sigma_\R,\vec{\mathcal{T}})\in I(\P,\Sigma)$.
\begin{definition} 
\label{Gammafpr}
Suppose $\Sigma$ is an iteration strategy with hull-condensation, $a$ is a countable transitive set such that $\M_\Sigma\in a$\footnote{$\M_\Sigma$ is the structure that $\Sigma$-iterates.} and $\Gamma$ is a pointclass closed under boolean operations and continuous images and preimages. Then $Lp^{\Gamma, \Sigma}_{\omega_1}(a)=\cup_{\a<\omega_1} Lp^{\Gamma, \Sigma}_\a(a)$ where
\begin{enumerate}
\item $Lp^{\Gamma, \Sigma}_0(a)=a\cup \{a\}$
\item $Lp^{\Gamma,\Sigma}_{\a+1}(a)=\cup\{ \M :\M$ is a sound $\Sigma$-mouse\index{$\Sigma$-mouse} over $Lp^{\Gamma, \Sigma}_{\a}(a)$\footnote{By this we mean $\M$ has a unique $(\omega,\omega_1+1)$-iteration strategy $\Lambda$ above $Lp^{\Gamma, \Sigma}_{\a}(a)$ such that whenever $\N$ is a $\Lambda$-iterate of $\M$, then $\N$ is a $\Sigma$-premouse.} projecting to $Lp^{\Gamma, \Sigma}_{\a}(a)$ and having an iteration strategy in $\Gamma\}$.
\item $Lp^{\Gamma,\Sigma}_\l(a)=\cup_{\a<\l}Lp^{\Gamma,\Sigma}_\a(a)$ for limit $\lambda$.
\end{enumerate}
We let $Lp^{\Gamma,\Sigma}(a)=Lp^{\Gamma,\Sigma}_1(a)$\index{$Lp^{\Gamma,\Sigma}(a)$}.
\end{definition}
\begin{definition}[$\Gamma$-Fullness preservation]\label{gammafp}
\index{$\Gamma$-fullness preservation}Suppose $(\P, \Sigma)$ is a hod pair and $\Gamma$ is a pointclass closed under boolean operations and continuous images and preimages. Then $\Sigma$ is a $\Gamma$-fullness preserving\index{fullness preservation} if whenever $(\VT, \Q)\in I(\P, \Sigma)$, $\a+1\leq \l^\Q$ and $\eta>\d_\a$ is a strong cutpoint of $\Q(\a+1)$, then
\begin{center}
$\Q| (\eta^+)^{\Q(\a+1)}=Lp^{\Gamma, \Sigma_{\Q(\a), \VT}}(\Q| \eta)$.
\end{center}
and
\begin{center}
$\Q|(\d_\a^+)^\Q=Lp^{\Gamma, \oplus_{\b<\a}\Sigma_{\Q(\b+1), \VT}}(\Q|\delta^\Q_\alpha)$.
\end{center}
\end{definition}
When $\Gamma = \powerset(\mathbb{R})$, we simply say fullness preservation; in this case, we also write $Lp$ ($Lp^{\Sigma}$) instead of $Lp^\Gamma$ ($Lp^{\Gamma,\Sigma}$). A stronger notion of $\Gamma$-fullness preservation is super $\Gamma$-fullness preservation. Similarly, when $\Gamma = \powerset(\mathbb{R})$, we simply say super fullness preservation.
\begin{definition}[Super $\Gamma$-fullness preserving]
\label{SuperGammafpr}\index{super $\gamma$-fullness preserving}
Suppose $(\P,\Sigma)$ is a hod pair and $\Gamma$ is a pointclass closed under boolean operations and continuous images and preimages. $\Sigma$ is super $\Gamma$-fullness preserving if it is $\Gamma$-fullness preserving and whenever $(\mathcal{\vec{T}},\Q)\in I(\P,\Sigma)$, $\alpha < \lambda^\Q$ and $x\in HC$ is generic over $\Q$, then
\begin{center}
$Lp^{\Gamma,\Sigma_{\Q(\alpha)}}(x) = \{\M \ | \ \Q[x]\vDash ``\M \textrm{ is a sound } \Sigma_{\Q(\alpha)}\textrm{-mouse over } x \textrm{ and } \rho_{\omega}(\M) = x"\}$.
\end{center}
Moreover, for such an $\M$ as above, letting $\Lambda$ be the unique strategy for $\M$, then for any cardinal $\kappa$ of $\Q[x]$, $\Lambda\rest H_{\kappa}^{\Q[x]} \in \Q[x]$.
\end{definition}
Hod mice that go into the direct limit system that gives rise to HOD have strategies that are super fullness preserving. Here is the statement of the strong mouse capturing.
\begin{definition}[The Strong Mouse Capturing]\label{smc}
\index{Strong Mouse Capturing, SMC}The Strong Mouse Capturing (S\textsf{MC}) is the statement: Suppose $(\P, \Sigma)$ is a hod pair such that $\Sigma$ has branch condensation and is $\Gamma$-fullness preserving for some $\Gamma$. Then for any $x, y \in \mathbb{R}$, $x \in OD_\Sigma(y)$ iff $x$ is in some $\Sigma$-mouse over $\la \P, y\ra$.
\end{definition}

When $(\P, \Sigma)=\emptyset$ in the statement of \rdef{smc} we get the ordinary Mouse Capturing (\textsf{MC}). The Strong Mouse Set Conjecture (\textsf{SMSC}) just conjectures that \textsf{SMC} holds below a superstrong.
\begin{definition}[Strong Mouse Set Conjecture]\label{smsc} \index{Strong Mouse Set Conjecture, SMSC}Assume $\textsf{AD}^+$ and that there is no mouse with a superstrong cardinal. Then $S\textsf{MC}$ holds.
\end{definition}
Recall that by results of \cite{ATHM}, \textsf{SMSC} holds assuming (\textasteriskcentered). To prove that hod pairs exist in $\textsf{AD}^+$ models, we typically do a hod pair construction. For the details of this construction, see Definitions 2.1.8 and 2.2.5 in \cite{ATHM}. 
We recall the $\Gamma$-hod pair construction from \cite{ATHM} which is crucial for our \textrm{HOD} analysis. Suppose $\Gamma$ is a pointclass closed under complements and under continuous preimages. Suppose also that $\lambda^\P$ is limit. We let
\begin{center}
\index{$\Gamma(\P,\Sigma)$} $\Gamma(\P,\Sigma) = \{A \ | \ \exists (\Q,\Sigma_\Q,\vec{\mathcal{T}})\in B(\P,\Sigma) \ A <_w\footnote{Wadge reducible to}\index{$<_w$} Code(\Sigma_\Q)\}$.
\end{center}
\begin{equation*}
HP^\Gamma\index{HP$^\Gamma$} = \{(\P,\Lambda) \ | \ (\P,\Lambda) \textrm{ is a hod pair and } Code(\Lambda) \in \Gamma\},
\end{equation*}
and
\begin{eqnarray*}
Mice^\Gamma\index{Mice$^\Gamma$} = \{(a,\Lambda,\M) &&\ | \ a\in HC, \ a \textrm{ is self-wellordered transitive, } \Lambda \textrm{ is an iteration } \\ && \textrm{strategy } \textrm{ such that } (\M_\Lambda,\Lambda) \in HP^\Gamma, \ \M_\Lambda \in a, \ \textrm{and } \M \trianglelefteq Lp^{\Gamma,\Lambda}(a)\}.
\end{eqnarray*}
If $\Gamma = \powerset(\mathbb{R})$, we let $HP = HP^\Gamma$ and $Mice = Mice^\Gamma$. Suppose $(\M_\Sigma,\Sigma) \in HP^\Gamma$. Let
\begin{equation*}
Mice^\Gamma_\Sigma = \{ (a,\M) \ | \ (a,\Sigma,\M) \in Mice^\Gamma\}. 
\end{equation*}
\begin{definition}[$\Gamma$-hod pair construction]
\label{gamma hod pair construction}
\index{$\Gamma$-hod pair construction}Let $\Gamma$ be an inductive-like pointclass and $A_\Gamma$ be a universal $\Gamma$-set. Suppose $(M,\delta,\Sigma)$ is such that $M\vDash \textsf{\textsf{ZF}C}$ - Replacement, $(M,\delta)$ is countable, $\delta$ is an uncountable cardinal in $M$, $\Sigma$ is an $(\omega_1,\omega_1)$-iteration strategy for $M$, $\Sigma \cap (L_1(V_\delta))^M \in M$. Suppose $M$ locally Suslin captures $A_\Gamma$. Then the $\Gamma$-hod pair construction of $M$ below $\delta$ is a sequence $\langle\langle\N_\xi^\beta \ | \ \xi < \delta\rangle, \P_\beta,\Sigma_\beta,\delta_\beta \ | \ \beta \leq \Omega\rangle$ that satisfies the following properties.
\begin{enumerate}
\item $M \Vdash_{Col(\omega,<\delta)}$ ``for all $\beta<\Omega, (\P_\beta,\Sigma_\beta)$ is a hod pair such that $\Sigma_\beta \in \Gamma$"\footnote{This means there is a strategy $\Psi$ for $\P_\beta$ extending $\Sigma_\beta$ such that $Code(\Psi)\in \Gamma$ and $\Psi$ is locally Suslin captured by $M$ (at $\delta$).};
\item $\langle\N_\xi^0 \ | \ \xi < \delta\rangle$ are the models of the $L[\vec{E}]$-construction of $V_\delta^M$ and $\langle\N_\xi^{\beta} \ | \ \xi < \delta\rangle$ are the models of the $L[\vec{E},\Sigma_\beta]$-construction of $V_\delta^M$. $\delta_0$ is the least $\gamma$ such that $o(\N_\gamma^0) = \gamma$ and $Lp^\Gamma(\N^0_\gamma) \vDash``\gamma$ is Woodin" and $\delta_{\beta+1}$ is the least $\gamma$ such that $o(\N_\gamma^{\beta+1}) = \gamma$ and $Lp^{\Gamma,\Sigma_\beta}(\N^{\beta+1}_\gamma) \vDash``\gamma$ is Woodin".
\item $\P_0 = Lp_{\omega}^\Gamma(\N_{\delta_0}^0)$ and $\Sigma_0$ is the canonical strategy of $\P_0$ induced by $\Sigma$.
\item Suppose $\delta_{\beta+1}$ exists, $\N^{\beta+1}_{\delta_{\beta+1}}$ doesn't project across $\delta_\beta$. Furthermore, if $\beta = 0$ or is successor and $\N^{\beta+1}_{\delta_{\beta+1}} \vDash ``\delta_\beta$ is Woodin" and if $\beta$ is limit then $(\delta_\beta^+)^{\P_\beta} = (\delta^+_\beta)^{\N_{\delta_{\beta+1}}^{\beta+1}}$,
then $\P_{\beta+1} = Lp_{\omega}^{\Gamma,\Sigma_\beta}(\N_{\delta_{\beta+1}}^{\beta+1})$ and $\Sigma_{\beta+1}$ is the canonical strategy $\P_{\beta+1}$ induced by $\Sigma$.
\item For limit ordinals $\beta$, letting $\P_\beta^* = \cup_{\gamma<\beta}\P_\gamma, \ \Sigma_\beta^* = \varoplus_{\gamma<\beta}\Sigma_\gamma$, and $\delta_\beta$ = sup$_{\gamma<\beta}\delta_\gamma$, if $\delta_\beta < \delta$ then let $\langle\N^{*,\beta}_\xi \ | \ \xi < \delta\rangle$ be the models of the $L[\vec{E},\Sigma_\beta^*]$-construction of $V_\delta^M.$ If there isn't any $\gamma$ such that $o(\N^{*,\beta}_\gamma) = \gamma$ and $Lp^{\Gamma,\Sigma_\beta^*}(\N^{*,\beta}_\gamma) \vDash ``\gamma$ is Woodin" then we let $\P_\beta$ be undefined. Otherwise, let $\gamma$ be the least such that $o(\N^{*,\beta}_\gamma) = \gamma$ and  $Lp^{\Gamma,\Sigma_\beta^*}(\N^{*,\beta}_\gamma) \vDash ``\gamma$ is Woodin." If $\N^{*,\beta}_\gamma$ doesn't project across $\delta_\beta$ then $\P_\beta = \N^{*,\beta}_\gamma|(\delta_\beta^{+\omega})^{\N^{*,\beta}_\gamma}$, and $\Sigma_\beta$ is the canonical iteration strategy for $\P_\beta$ induced by $\Sigma$. Otherwise, let $\P_\beta$ be undefined.
\end{enumerate}
\end{definition}
\subsection{A definition of $K^\Sigma(\mathbb{R})$}
\begin{definition}
\label{model}\index{model operator}
Let $\mathcal{L}_0$ be the language of set theory expanded by unary predicate
symbols $\dot{E}, \dot{B}, \dot{S}$, and constant symbols $\dot{l}$ and
$\dot{a}$. Let $a$ be a given transitive set. A \textbf{model with paramemter a}
is an $\mathcal{L}_0$-structure of the form
\begin{center}
$\mathcal{M} = (M; \in, E, B, \mathcal{S}, l, a)$
\end{center}
such that $M$ is a transtive rud-closed set containing $a$, the structure
$\mathcal{M}$ is amenable, $\dot{a}^\M = a$, $\mathcal{S}$ is a sequence of
models with paramemter $a$ such that letting $S_\xi$ be the universe of
$\mathcal{S}_\xi$
\begin{itemize}
\item $\dot{S}^{\mathcal{S}_\xi} = \S\rest \xi$ for all $\xi\in
\textrm{dom}(\S)$ and $\dot{S}^{\S_\xi}\in S_\xi$ if $\xi$ is a successor
ordinal;
\item $S_\xi = \cup_{\alpha<\xi}S_\alpha$ for all limit $\xi \in
\textrm{dom}(\S)$;
\item if $\textrm{dom}(\S)$ is a limit ordinal then $M = \cup_{\alpha\in
\textrm{dom}(\S)} S_\alpha$ and $l=0$, and
\item if $\textrm{dom}(\S)$ is a successor ordinal, then $\textrm{dom}(\S) =
l$.
\end{itemize}
\end{definition}
The above definition is due to Steel and comes from \cite{wilson2012contributions}. Typically, the predicate $\dot{E}$ codes the top extender of the model;
$\dot{S}$ records the sequence of models being built so far. Next, we write down
some notations regarding the above definition.
\begin{definition}
\label{someNotations}
Let $\M$ be the model with parameter $a$. Then $|\M|$ denotes the universe of
$\M$. We let $l(\M) = dom(\dot{S}^\M)$\index{$l(\M)$} denote the \textbf{length of} $\M$ and
set $\M|\xi = 	\dot{S}^\M_\xi$ for all $\xi <l(\M)$. We set $\M|l(\M) = \M$. We
also let $\rho(\M)\leq l(\M)$ be the least such that there is some $A\subseteq
M$ definable (from parameters in $M$) over $\M$ such that $A\cap
|\M|\rho(\M)|\notin M$.
\end{definition}
\indent Suppose $J$ is a mouse operator that condenses well and relivizes well (in the sense of
\cite{CMI}). The definition of $\M_1^{J,\sharp}$ (more generally,
the definition of a $J$-premouse over a self-wellorderable set) has been given
in \cite{CMI} and \cite{wilson2012contributions}. Here we only re-stratify its
levels so as to suit our purposes.
%and determines itself on generic extensions\footnote{There is a
%formula $\phi$ in the language of $J$-premice such that for any
%$J$-premouse $N$ and for any $g$ generic over $N$ ($g$ may be
%trivial), $J\rest N[g]$ is defined over $N[g]$ via $\phi$.}.
\begin{definition}
\label{modelOpSoa}\index{$F_J(\M)$}
Let $\M$ be a model with parameter $a$, where $a$ is self-wellorderable. Suppose
$J$ is an iteration strategy for a mouse $\P$ coded in $a$. Let $A$ be a set of
ordinals coding the cofinal branch of $\mathcal{T}$ according to $J$, where $\T$
is the least (in the canonical well-ordering of $\M$) such that $J(\T)\notin
|\M|$ if such a tree exists; otherwise, let $A=\emptyset$. In the case $A\neq
\emptyset$, let $A^* = \{o(\M)+\alpha \ | \ \alpha\in A\}$ and $\xi$ be 
\begin{enumerate}
\item the least such that $\mathcal{J}_\xi(\M)[A^*]$ is a $\Q$-structure of
$\M|\rho(\M)$ if such a $\xi$ exists; or, 
\item $\xi$ is the least such that $\mathcal{J}_\xi(\M)[A^*]$ defines a set not
amenable to $\M|\rho(\M)$ if such a $\xi$ exists; or else,
\item $\xi = \textrm{sup}(A^*)$.
\end{enumerate}
For $\alpha\leq\xi$, we define $\M_\alpha$. For $\alpha=0$, let $\M_0 = \M$. For
$0 < \alpha < \xi$, suppose $\M_\alpha$ has been defined, we let 
\begin{center}
$\M_{\alpha+1} = (|\mathcal{J}(\M_\alpha)[A^*]|; \in, \emptyset,
A^*\cap|\mathcal{J}(\M_\alpha)[A^*]|,\dot{S}^\smallfrown \M_\alpha, l(\M_\alpha) +
1, a)$.
\end{center}
%***What's J(M_\alpha)[A]? What about just indexing the M_\alpha's at limit
%ordinals, so you just automatically index at the height of the structure?
For limit $\alpha$, let $\M_\alpha = \cup_{\beta<\alpha}\M_\beta$. We then let 
$F_J(\M) = \M_\xi$. In the case $A=\emptyset$, we let
\begin{center}
$F_J(\M) = (|\mathcal{J}(\M)|;\in,\emptyset,\emptyset,\dot{S}^\smallfrown
\M,l(\M)+1,a)$.
\end{center}
In the case $J$ is a (hybrid) first-order mouse operator\footnote{This means
there is a (hybrid) mouse operator $J'$ that condenses well
%and determines itself on generic extensions
such that there is a formula $\psi$ in the language
of $J'$-premice and some parameter $a$ such that for every $x\in
\textrm{dom}(J)$, $J(x)$ is the least $\M\lhd Lp^{J'}(x)$ that satisfies
$\psi[x,a]$.}, we let $J^*(\M)$ be the least level of $J(\M)$ that is a
$\Q$-structure or defines a set not amenable to $\M|\rho(\M)$ if it exists;
otherwise, $J^*(\M) = J(\M)$. We then define $F_J(\M)$ as follows. Let $\M_0 =
\M$. Suppose for $\alpha$ such that $\omega\alpha < o(J^*(\M))$, we've defined
$\M||\alpha$ and maintained that $|\M||\alpha| = |J^*(\M)||\alpha|$, let
$\M_{\alpha+1} = (|J^*(\M)||(\alpha+1)|;\in, \emptyset, \emptyset,
\dot{S}^\smallfrown \M_\alpha, l(\M_\alpha)+1,a)$, where $\dot{S} =
\dot{S}^{\M_\alpha}$. If $\alpha$ is limit and $J^*(\M)||\alpha$ is passive, let
$\M_\alpha = \cup_{\beta<\alpha} \M_\beta$; otherwise, let $\M_\alpha =
(\cup_{\beta<\alpha} |\M_\beta|;\in, E, \emptyset,
\cup_{\beta<\alpha}\dot{S}^{\M_\beta}, \textrm{sup}_{\beta<\alpha} l(\M_\beta),
a)$, where $E$ is $F_\alpha^{J^*(\M)}$. Finally,
 \begin{center}
$F_J(\M) = M_{\gamma}$, where $\omega\gamma = o(J^*(\M))$.
\end{center}
%***J^*(\M) didn't come explicitly into the def of F_J(\M), and in the case
%thatJ^*(\M)=J(\M), B wasn't defined
\end{definition}

The rest of the definition of a $J$-premouse over a self-wellorderable set $a$
is as in \cite{wilson2012contributions}. We now wish to extend this definition
to non self-wellorderable sets $a$, and in particular to $\mathbb{R}$. For this, we
need to assume that the following absoluteness property holds of the operator
$J$. As shown in \cite{trang2012scales}, if $J$ is a mouse strategy operator for a nice enough
strategy, then it does hold.

\begin{definition}
 We say $J$ \textbf{determines itself on generic extensions (relative to
$\N=\M_1^{J,\sharp}$)}\index{determines itself on generic extensions} iff there are formulas $\varphi,\psi$ in the language
of $J$-premice such that for any correct, non-dropping iterate $\P$ of
$\N$, via a countable iteration tree, any $\P$-cardinal $\delta$, any
$\gamma\in$ OR such that $\P|\gamma\models\varphi$\textup{+}``$\delta$ is
Woodin'', and any $g$ which is set-generic over $\P|\gamma$, then
$(\P|\gamma)[g]$
is closed under $J$ and $J\rest\P[g]$ is defined over $(\P|\gamma)[g]$ by
$\psi$. We say such a pair $(\varphi,\psi)$ \emph{generically determines $J$}.
\end{definition}
The model operators that we encounter in the core model induction condense well, relativize well, and determine themselves on generic extensions. 
\begin{definition}
We say a (hod) premouse $\M$ is \textbf{reasonable} iff under $\textsf{ZF}+\textsf{AD}$, $\M$
satisfies the
first-order properties which are consequences of
$(\omega,\omega_1,\omega_1)$-iterability, or under $\textsf{ZFC}$, $\M$ satisfies the first-order
properties which are consequences of $(\omega,\omega_1,\omega_1+1)$-iterability.
\end{definition}

%\begin{lemma}[Stationarity of
%\protect{$L\left[\es\right]$} construction]\label{lem:back_stat}
%Let
%$P$ be a reasonable $k$-sound (hod) mouse, and let $\Sigma$ be a
%$(k,\OR)$-iteration
%strategy for $P$. Let $\CC=\left<N_\alpha\right>_{\alpha\leq\theta}$ be the
%models of a (fully backgrounded) $L[\es]$-construction, such that for each
%active $N_{\alpha+1}=(N_\alpha,F)$, there is an extender $E^*$ such that
%$F\rest\nu(F)\sub E^*$, $\car(P)<\crit(E^*)$ and $i_{E^*}(\Sigma)\rest
%V_{\nu(F)}\sub\Sigma$, and if $P$ is non-tame, then $i_{E^*}(\Sigma)\rest
%V_{\nu(F)+1}\sub\Sigma$. Then either (a) for each $\alpha\leq\theta$, we have
%$N_\alpha\ins P'$ for some $\Sigma$-iterate $P'$ of $P$; or (b) there is
%$\alpha\leq\theta$ and a tree $\Tt$ via $\Sigma$, of successor length,
%$N_\alpha=\fin^\Tt$ and $b^\Tt$ does not drop in model.\end{lemma}

%\begin{remark} Here $\Sigma$ need only actually be a $(k,\theta)$-iteration
%strategy, where $\theta$ is large enough that the statement makes
%sense.\end{remark}
The following lemma comes from \cite{trang2012scales}.
\begin{lemma}
\label{GenericInt}
Let $(\P,\Sigma)$ be such that either (a)
$\P$ is a reasonable premouse and $\Sigma$ is the unique normal OR-iteration
strategy for $\P$; or (b) $\P$ is a reasonable hod premouse, $(\P,\Sigma)$ is a
hod pair which is fullness preserving and has branch condensation. Assume that $\M_1^\Sigma$ exists
and is fully iterable. Then $\Sigma$ determines itself on generic
extensions.\end{lemma}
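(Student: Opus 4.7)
The plan is to produce explicit formulas $\varphi, \psi$ in the language of $\Sigma$-premice witnessing the generic determination. Take $\varphi(v)$ to express: ``$v$ is a sound $\Sigma$-premouse, $\delta$ is the largest Woodin cardinal of $v$, and above $\delta$ the extender sequence of $v$ is that of an active $\M_1^{\Sigma,\sharp}$-like structure (in particular a measurable cardinal of the full model sits above $\delta$, coding enough of $\Sigma$).'' Since by hypothesis $\Sigma$ condenses and relativizes well, membership in the $\Sigma$-premouse hierarchy is first-order over the structure, so $\varphi$ is first-order. The non-dropping iterates $\P$ of $\N = \M_1^{\Sigma,\sharp}$ and their initial segments $\P|\gamma$ at the appropriate Woodin level are precisely the backgrounds to which $\varphi$ will apply.

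Given such a $\P|\gamma$ and $g$ set-generic over $\P|\gamma$, and a limit-length normal tree $\T \in \P[g]$ on $\M_\Sigma$, I would compute $\Sigma(\T)$ inside $(\P|\gamma)[g]$ as follows. Using the Woodin $\delta$, form the extender algebra $\mathbb{B}_\delta \in \P|\gamma$. Inside $(\P|\gamma)[g]$, perform a linear genericity iteration $i:\P|\gamma \to \P^*$ below $\delta$ making $\T$ generic over $\P^*$ for $\mathbb{B}_{i(\delta)}$; this iteration uses only extenders on the sequence of $\P|\gamma$ and is carried out internally. Now, inside $\P^*[\T]$, look for a cofinal wellfounded branch $b$ of $\T$ and an initial segment $\Q \trianglelefteq \P^*$ above $\M(\T)$ such that $\Q$ is the $\Sigma_{\M(\T)}$-$\Q$-structure for $\delta(\T)$ (the minimal level which either projects below $\delta(\T)$ or defines the failure of $\delta(\T)$ being Woodin, computed in the $L[\vec E,\Sigma]$-sense), and such that $\Q(b,\T) = \Q$. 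Let $\psi(\T, b)$ assert that $b$ is this branch; absoluteness of the extender algebra makes $\psi$ first-order over $(\P|\gamma)[g]$.

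To verify that the $b$ produced by $\psi$ equals $\Sigma(\T)$, I would use a standard $\Q$-structure comparison. In case (a), $\Sigma$ is the unique normal OR-strategy for $\M_\Sigma$, so by the methods of \cite{CMI} it is $\Q$-structure guided, and the branch chosen in $\P^*$ is the only branch whose $\Q$-structure is iterable; hence it agrees with $\Sigma(\T)$. In case (b), $\Sigma$ has branch condensation and is fullness preserving, so by \rthm{branch condensation's consequences} it is pullback-consistent and positional; if $b' \neq b$ were a competing branch with $\Q$-structure $\Q'$, then comparing $\Q$ with $\Q'$ inside $\P^*$ (using iterability of $\N$) would yield a hull contradicting branch condensation of $\Sigma$ or, alternatively, produce a premouse violating fullness preservation of one of the candidate branches.

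The main obstacle is ensuring that the $\Q$-structure $\Q$ used in the definition of $\psi$ is genuinely present as an initial segment of $\P^*$ (rather than only of some larger external model), particularly in case (b), where $\Q$ must itself be a $\Sigma$-premouse. This is where the role of the measurable cardinal of $\N$ above the Woodin is crucial: passing to $\P^*$ preserves enough measurable structure above $\delta^* := i(\delta)$ so that the $L[\vec E,\Sigma]$-construction of $\P^*$ above $\M(\T)$ runs long enough to build a $\Q$-structure for $\delta(\T)$ before projecting across. This step uses the hypothesis that $\M_1^\Sigma$ exists and is fully iterable, which guarantees iterability (and hence correctness) of the $\Q$-structures constructed above $\M(\T)$ in every relevant iterate. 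Once this is in place, $\psi$ defines $\Sigma\rest\P[g]$ over $(\P|\gamma)[g]$, completing the proof.
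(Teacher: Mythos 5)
The paper does not give its own proof of this lemma; it attributes it to \cite{trang2012scales}, so there is nothing in-paper to compare your argument against directly. Evaluated on its own terms, your high-level plan --- genericity iterate $\P|\gamma$ inside $(\P|\gamma)[g]$ to make the input tree generic, then recover $\Sigma(\T)$ from a $\Q$-structure --- is the standard route and is surely what \cite{trang2012scales} does, but two of your steps are glossed over in a way that leaves real gaps.

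First, you ask for an initial segment $\Q\trianglelefteq\P^*$ that is simultaneously ``the $\Sigma_{\M(\T)}$-$\Q$-structure for $\delta(\T)$.'' These cannot be the same object: a level of $\P^*$ sits over $\P^*$'s base $a$, while $\Q(b,\T)$ is a $\Sigma_{\M(\T)}$-premouse over $\M(\T)$, and $\M(\T)$ is not in general a level of $\P^*$. The missing device is an $S$-construction (P-construction): once $\M(\T)$ is generic at $i(\delta)$ over $\P^*$, levels of $\P^*$ above $i(\delta)$ translate, inside $\P^*[\M(\T)]$, into $\Sigma_{\M(\T)}$-premice over $\M(\T)$, and $\Q(b,\T)$ is the translation of one of those levels. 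Your remark about ``the $L[\vec{E},\Sigma]$-construction of $\P^*$ above $\M(\T)$'' gestures at this, but without the translation mechanism the formula $\psi$ as you describe it does not even type-check. Second, you assert that the genericity iteration $i:\P|\gamma\to\P^*$ ``is carried out internally,'' but that is exactly the point that needs an argument: to run the iteration in $(\P|\gamma)[g]$ you must choose the correct branches at limit stages, i.e.\ the restriction of $\P|\gamma$'s strategy to trees below $\delta$ must itself be definable over $(\P|\gamma)[g]$. That restriction is again $\Q$-structure guided, and those $\Q$-structures are again $\Sigma$-mice obtained by $S$-translation from levels of the branch models, so there is a bootstrapping that has to be set up and closed. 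This is the place where full iterability of $\M_1^\Sigma$, together with condensation and relativization of $\Sigma$, is actually used --- not, as you suggest, merely to guarantee iterability of a $\Q$-structure at the very last step. Addressing these two points should turn your outline into a proof.
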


Let $M$ be a transitive model of
some fragment of set theory. Let $\dot{G}$ be the canonical $Col(\omega,M)$-name
for the generic $G\subseteq Col(\omega,M)$ and $\dot{x}_{\dot{G}}$ be the
canonical name for the real coding $\{(n,m) \ | \ G(n) \in G(m)\}$, where we
identify $G$ with the surjective function from $\omega$ onto $M$ that $G$
produces. Let $\Lambda$ be the strategy for $\N=\M_1^{J,\sharp}$. Using the
terminology of \cite{ATHM}, we say a tree $\mathcal{T}$ on
$\N$ via $\Lambda$ is the tree for making \textit{$M$ generically generic}
if the following holds:
%***Is \N=M_1^J^#?
\begin{enumerate}
\item $\mathcal{T}\rest (o(M)+1)$ is a linear iteration tree obtained by
iterating the first total measure of $\M$ and its images $o(M) + 1$ times.
\item For $\alpha \geq o(M) + 1$, $E^\mathcal{T}_\alpha$ is the extender with
least index in $\M^\mathcal{T}_\alpha$ such that there is a condition $p\in
Col(\omega,M)$ such that $p \Vdash \dot{x}_{\dot{G}}$ does not satisfy an axiom
involving $E^\mathcal{T}_\alpha$ from the extender algebra $\mathbb{B}_\delta$,
where $\delta$ is the Woodin cardinal of $\mathcal{M}^\mathcal{T}_\alpha$.
\end{enumerate}
We denote such a tree $\T_{M}$\index{$\T_{M}$}. Note that $\T_M\in V$, $\T$ is nowhere
dropping, and $lh(\mathcal{T}_{M}) < |M|^+$. Also note that $\T_M$ does
not include the last branch. Given a formula $\varphi$, let
$\T_M^\varphi=\T_M\rest\lambda$\index{$\T_M^\varphi$}, where $\lambda$ is least such that either
$\lambda=\lh(\T_M)$ or $\lambda$ is a limit ordinal and there is
$\P\trianglelefteq Q(\T_M\rest\lambda)$ such that
$M(\T_M\rest\lambda)\trianglelefteq\P$ and $\P\models\varphi$. Now suppose there is
$\P\lhd\N$ such that $\N|\delta^\N\trianglelefteq\P$ and $\P\models\varphi$. Let
$\lambda\leq\lh(\T_M^\varphi)$ be a limit. If
$\lambda<\lh(\T_M^\varphi)$ let
$Q^\varphi(\T_M\rest\lambda)=Q(M(\T_M\rest\lambda))$. Otherwise let
$Q^\varphi(\T_M\rest\lambda)=\P$, where $\P$ is least such that
$M(\T_M\rest\lambda)\trianglelefteq\P\trianglelefteq M^{\T_M}_{\Lambda(\T_M\rest\lambda)}$ and
$\P\models\varphi$. We should mention that in order for the definition of $\mathcal{T}_M$ to make sense, $\Lambda$ and $\Sigma$ need to be $(\omega,|M|^++1)$-iterable.

We're ready to define $J$-premice over an arbitrary transitive set $a$. The idea that to define a $\Sigma$-premouse (over an arbitrary set), it suffices to tell the model branches of trees that make certain levels of the model generically generic comes from \cite{ATHM}, where it's used to reorganize hod mice in such a way that $S$-constructions work.

\begin{definition}
\label{modelOpGenerala2}
Suppose $a$ is a transitive set coding $\M_1^{J,\sharp}$. Suppose $(\varphi,\psi)$ generically determines $J$. Let $\Lambda$ be the
strategy for $\M_1^{J,\sharp}$. We define $F_J^*(a)$ \index{$F_J^*(a)$}to be a level of a model $\M$ with parameter $a$ with the following properties. There is $\alpha < l(\M)$ such that $\M|\alpha\vDash \textsf{ZF}$. Let $\alpha$ be the least such and let $\xi$ be the largest cardinal of $\M|\alpha = \mathcal{J}_\alpha(a)$. Let $\lambda\leq\lh(\T_{\M|\alpha}^\varphi)$ be a
limit. Let
\begin{center}
$\P_{\alpha,\lambda}=Q^\varphi(\T_{\M|\alpha}\rest\lambda)$.
\end{center}
Let $B\subseteq o(\P_{\alpha,\lambda})$ be the standard set
coding $\P_{\alpha,\lambda}$. Let $\omega\gamma=o(\P_{\alpha,\lambda})$. Let for $\beta < l(\M)$,
\begin{center}
$A_\beta=\{o(\M|\beta)+\eta \ | \ \eta\in B\}\times\{(\alpha,\lambda)\}$. 
\end{center}
and define 
\begin{center}
$F_{J,\alpha,\lambda}(\M|\beta)=\mathcal{J}_\gamma^{A_\beta}(\M|\beta)$ 
\end{center}
if no levels of $\mathcal{J}_\gamma^A(\M|\beta)$ is a $\Q$-structure for $(\M|\beta)|\rho(\M|\beta)$ or projects across $\rho(\M|\beta)$; otherwise, let $F_{J,\alpha,\lambda}(\M|\beta)=\mathcal{J}(\M|\beta)$.\footnote{Technically, $F_{J,\alpha,\lambda}(\M|\beta)$ is stratified as a model over $a$ but we suppress the structure for brevity. See Definition \ref{modelOpSoa} for the stratification.}. 

Suppose $\M|\beta$ has been defined and there is a $\lambda$ such that $\P_{\alpha,\lambda}$ is defined, $\T^\varphi_{\M|\alpha}\rest\lambda\in \M|\beta$, but for no $\beta' < l(\M|\beta)$, $F_{J,\alpha,\lambda}(\M|\beta') \neq \mathcal{J}(\M|\beta')$, we let then $\M|\xi^* = F_{J,\alpha,\lambda}(\M|\beta)$, where $\xi^* = l(F_{J,\alpha,\lambda}(\M|\beta))$ for the least such $\lambda$.

We say that $\T^\varphi_{\M|\alpha}|\lambda$ is \textbf{taken care of in $\M$} if there is a $\beta < l(\M)$ such that $F_{J,\alpha,\lambda}(\M|\beta)$\\$\lhd \M$ and  $F_{J,\alpha,\lambda}(\M|\beta)\neq \mathcal{J}(\M|\beta)$. So $\M$ is the least such that for every limit $\lambda \leq lh(\T^\varphi_{\M|\alpha})$, $\T^\varphi_{\M|\alpha}\rest \lambda$ is taken care of in $\M$.

Finally, let $F_J^*(a) = \M$ if no levels of $\M$ projects across $\xi$. Otherwise, let $F_J^*(a) = \M|\beta$, where $\beta$ is the least such that $\rho_\omega(\M|\beta) < \xi$.

\end{definition}

\begin{definition}[Potential $J$-premouse over $a$]
\label{PotJPremice2}\index{potential $J$-premouse}
Let $a$ be a transitive structure such that $a$ contains a real coding $\N$. We
say that $\M$ is a \textbf{potential $J$-premouse over $a$} iff $\M$ is a model
with parameter $a$, and there is an ordinal $\lambda$ and a increasing, closed 
sequence $\left<\eta_\alpha\right>_{\alpha\leq\lambda}$ of ordinals,
such that for each $\alpha\leq\lambda$, we have:
\begin{enumerate}[(a)]
\item if $a$ is not a self-wellordered set, then $\eta_0=1$ and $\M|1=a$; otherwise, either $\lambda=0$ and $\M = \M|\eta_0\trianglelefteq \M_1^{J,\sharp}$ or else $\M|\eta_0 = \M_1^{J,\sharp}$ (in the sense of Definition \ref{modelOpSoa}),
\item $\eta_\alpha\leq l(\M)$,
\item if $\alpha+1<\lambda$, then $\M|\eta_{\alpha+1}=F^*_J(\M|\eta_\alpha)$,
\item if $\alpha+1=\lambda$, then $\M\trianglelefteq F^*_J(\M|\eta_\alpha)$,\footnote{We will
also
use $\M_\eta$ to 
denote $\M|\eta$.}
\item $\eta_\lambda=l(\M)$,
\item if $\eta=\eta_\alpha$ and $\dot{E}^{\M|\eta}\neq\emptyset$ (and therefore
$\alpha$ is a limit) then $\dot{E}^{\M|\eta}$ codes an extender $E$ that coheres
$\M|\eta$ and satisfies
the obvious modifications of the premouse axioms (in the sense of Definition
2.2.1 of \cite{wilson2012contributions}) and $E$ is
$a\times \gamma$-complete for all $\gamma < \textrm{crt}(E)$\footnote{This means
whenever $\langle X_x \ | \ x\in a\times \gamma\rangle\in \M|\lambda$ is such
that $X_x\in E_b$ for each $x\in a\times \gamma$, where $b$ is a finite subset
of $lh(E)$, then $\cap_{x\in a}X_x \in E_b$}.
\end{enumerate}
\end{definition}

We define projecta, standard parameters, solidity, soundness, cores as in
section 2.2 of \cite{wilson2012contributions}.
\begin{definition}
\label{JPremouse}\index{$J$-premouse}
Suppose $\M$ is a potential $J$-premouse over $a$. Then we say that $\M$ is a
$J$-premouse over $a$ if for all $\lambda < l(\M)$, $\M|\lambda$ is
$\omega$-sound.
\end{definition}

\begin{definition}
\label{Active}
Suppose $\M$ is a $J$-premouse over $a$. We say that $\M$ is active if
$\dot{E}^\M \neq \emptyset$ or $\dot{B}^\M \neq \emptyset$. Otherwise, we say
that $\M$ is passive.
\end{definition}
\begin{definition}[$J$-mouse]
\label{JMouse}\index{$J$-mouse}
Let $\M, a$ be as in Definition \ref{JPremouse}. We say that $\N$ is a $J$-mouse
over $a$ if $\rho_{\omega}(\N) = a$ and whenever $\N^*$ is a countable
transitive $J$-premouse over some $a^*$ and there is an elementary embedding
$\pi: \N^*\rightarrow \N$ such that $\pi(a^*) = a$, then $\N^*$ is
$(\omega,\omega_1+1)$-iterable\footnote{Sometimes we need more than just $\omega_1+1$-iterability.} and whenever $\R$ is an iterate of $\N^*$ via its unique
iteration strategy, $\R$ is a $J$-premouse over $a^*$.
\end{definition}
Suppose $\M$ is a $J$-premouse over $a$. We say that $\M$ is \textit{$J$-complete} if $\M$ is closed under the operator $F_J^*$. The following lemma is also from \cite{trang2012scales}.
\begin{lemma}
\label{ClosedUnderSigma}
Suppose $\M$ is a $J$-premouse over $a$ and $\M$ is $J$-complete. Then $\M$ is closed under $J$;
furthermore, for any set generic extension $g$ of $\N$, $\N[g]$ is closed under
$J$ and in fact, $J$ is uniformly definable over $N[g]$ (i.e. there is a
$\mathcal{L}_0$-formula $\phi$ that defines $J$ over any generic extension of
$N$).
\end{lemma}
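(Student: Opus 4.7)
The plan is to exploit the fact that $(\varphi,\psi)$ generically determines $J$ together with the way $F^*_J$ bakes in the $Q$-structures along genericity iterations. Given $x\in \M$, I would first fix $\alpha<l(\M)$ least such that $\M|\alpha\models \textsf{ZF}$ and $x\in \M|\alpha$; since $\M$ is $J$-complete, the level $F^*_J(\M|\alpha)$ is an initial segment of $\M$, and by \rdef{modelOpGenerala2} this level contains, coded into its predicates, the $Q$-structures $\P_{\alpha,\lambda}=Q^\varphi(\T_{\M|\alpha}\restriction\lambda)$ for every limit $\lambda\leq\lh(\T^\varphi_{\M|\alpha})$. Hence inside $\M$ we can read off every branch of $\T^\varphi_{\M|\alpha}$, and by $Q$-structure guidance recover $\Lambda\restriction \T^\varphi_{\M|\alpha}$ itself from these $Q$-structures in a uniform way.

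Next I would use this to make $\M|\alpha$ generically generic over a correct non-dropping iterate $\P$ of $\N=\M_1^{J,\sharp}$. The tree $\T_{\M|\alpha}$ is constructed purely from the extender algebra conditions in the successive models of the iteration and from the branches supplied by $\Lambda$; because the branch choices are exactly what the $Q$-structures in $F^*_J(\M|\alpha)$ encode, the final iterate $\P$ and the generic $g\subseteq \mathrm{Coll}(\omega,\M|\alpha)$ making $\dot{x}_{\dot G}$ code $\M|\alpha$ all live definably over $\M$. Now by the hypothesis that $(\varphi,\psi)$ generically determines $J$, one has that $(\P|\gamma)[g]$ is closed under $J$ for a suitable $\gamma$ with $\P|\gamma\models\varphi+\text{``}\delta\text{ is Woodin''}$, and $J\restriction (\P|\gamma)[g]$ is defined over $(\P|\gamma)[g]$ by $\psi$. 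Since $x\in (\P|\gamma)[g]$, this computes $J(x)$ and places it in $\M$.

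For a set-generic extension $\M[h]$, the argument is the same with one extra wrinkle: any $y\in \M[h]$ sits inside some $\M|\alpha[h]$ for $\alpha$ as above with the forcing in $\M|\alpha$, and the iteration tree $\T_{\M|\alpha[h]}$ can be formed over the same $\P$ because $\P$ already absorbs $\M|\alpha$ and hence $\M|\alpha[h]$ generically. The $Q$-structures guiding $\T^\varphi_{\M|\alpha[h]}$ are still precisely the ones coded into $F^*_J(\M|\alpha)\trianglelefteq \M\subseteq \M[h]$, so $\psi$ defines $J$ over the resulting generic extension of $\P|\gamma$; uniformity of the whole description in $\alpha$ and the forcing then yields a single $\mathcal{L}_0$-formula $\phi$ defining $J$ over every set-generic extension of $\M$.

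The main obstacle is the bookkeeping: verifying that the $Q$-structures packaged into the levels $F^*_J(\M|\beta)$ (as in \rdef{modelOpGenerala2}) really do reconstitute the whole branch strategy $\Lambda\restriction \T^\varphi_{\M|\alpha[h]}$ needed for the genericity iteration, and not merely the part used to define one specific $F^*_J$-step. This comes down to the $Q$-structure property for trees on $\N$ above its Woodin, together with the fact that $J$-completeness forces \emph{every} limit $\lambda\leq \lh(\T^\varphi_{\M|\alpha})$ to be taken care of in $\M$, so no branch choice along the iteration is missing from $\M$. Once that is in hand, generic absoluteness of $\varphi$, $\psi$, and the extender algebra forcing gives both assertions at once.
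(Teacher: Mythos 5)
The paper itself gives no proof of this lemma; it simply cites \cite{trang2012scales}, so there is nothing in the body of the paper to compare your argument against. Judged on its own merits, your sketch has the right skeleton for the first assertion: $J$-completeness places $F^*_J(\M|\alpha)\trianglelefteq\M$, which (by the way \rdef{modelOpGenerala2} packages the $\P_{\alpha,\lambda}=Q^\varphi(\T_{\M|\alpha}\rest\lambda)$ into predicates of levels of $\M$) lets $\M$ reconstruct the entire genericity-iteration tree $\T^\varphi_{\M|\alpha}$ together with its branches, hence its last model $\P$. One point you should not gloss over: the generic $g$ does \emph{not} ``live definably over $\M$'' in general, and $(\P|\gamma)[g]$ is not a subset of $\M$; the reason $J(x)$ nevertheless lands in $\M$ is that $J(x)$ is independent of the particular generic (it is an actual operator value), so it can be extracted from the $\mathbb{B}_\delta$-forcing relation over $\P|\gamma$ together with $\psi$ — and that forcing relation is definable over $\P|\gamma\in\M$. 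This is exactly the step your phrase ``this computes $J(x)$ and places it in $\M$'' elides.

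The genuine gap is in the generic-extension clause. You propose to run the argument with a \emph{new} tree $\T_{\M|\alpha[h]}$ and assert that the $Q$-structures guiding it are ``precisely the ones coded into $F^*_J(\M|\alpha)$.'' That is not true: $\T_{\M|\alpha[h]}$ is a different tree from $\T_{\M|\alpha}$ (it makes a different model extender-algebra generic, so the extender choices and hence the branches diverge), and nothing in $J$-completeness of $\M$ supplies $Q$-structures for it — supplying those would amount to assuming $\M[h]$ is already $J$-complete, which is in effect what is to be shown. The correct move is to build \emph{no} new tree at all: keep the iterate $\P$ coming from $\T_{\M|\alpha}$, whose branches you already have in $\M$. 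Since the generic-determination hypothesis is stated for an arbitrary set-generic $g$ over $\P|\gamma$, and $(\P|\gamma)[g][h]$ (for $h$ small enough that its forcing is coded into $\M|\alpha$) is again a set-generic extension of $\P|\gamma$, $\psi$ directly computes $J(y)$ for $y\in\M[h]\subseteq(\P|\gamma)[g][h]$ with no further iteration. The uniform $\mathcal{L}_0$-formula then comes for free, since the same $(\varphi,\psi)$ and the same forcing-relation calculation apply at every level $\alpha$ and every small forcing.
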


If $a$ in Definition \ref{JMouse} is $H_{\omega_1}$, then we define
$Lp^J(\mathbb{R})$ to be the union of all $J$-mice $\N$ over $a$\footnote{We'll
be also saying $J$-premouse over $\mathbb{R}$ when $a = H_{\omega_1}$}. In core
model induction applications, we typically have a pair $(\P,\Sigma)$ where $\P$
is either a hod premouse and $\Sigma$ is $\P$'s $(\omega,\omega_1,\mathfrak{c}^++1)$-iteration strategy with branch
condensation and is fullness preserving (relative to mice in some pointclass) or
$\P$ is a sound (hybrid) premouse projecting to some countable set $a$ and
$\Sigma$ is the unique (normal) strategy for $\P$. Lemma \ref{GenericInt} shows that $\Sigma$
condenses well and determines itself on generic extension in the sense defined
above\footnote{Technically, the statement of Lemma \ref{GenericInt} assumes full $(\omega,\textrm{OR})$-iterability but the proof of the lemma is local enough that this holds.}. We then define $Lp^\Sigma(\mathbb{R})$\footnote{In this paper, we use $Lp^\Sigma(\mathbb{R})$ and $K^\Sigma(\mathbb{R})$ interchangably.} as above. 

We mention a theorem of Sargsyan and Steel that will be important for our computation. See \cite{DMATM} for a proof of the case $\Theta=\theta_0$ of the theorem, where $\Sigma = \emptyset$ and $K^\Sigma(\mathbb{R}) = K(\mathbb{R})$.
\begin{theorem}[Sargsyan, Steel]\label{main theorem}
Assume $\textsf{AD}^++\textsf{SMC} + \Theta=\theta_{\alpha+1}$ or $\Theta=\theta_0$. Suppose $(\P, \Sigma)$ is a hod pair such that $\Sigma$ has branch condensation and is fullness preserving and $K^\Sigma(\mathbb{R})$ is defined. Suppose also that $\M_\infty(\P,\Sigma)|\theta_\alpha= HOD|\theta_\alpha$, where $\M_\infty(\P,\Sigma)$ is the direct limit of all $\Sigma$-iterates of $\Sigma$. Then
\begin{center}
$\{ A\subseteq \mathbb{R} : A\in OD_{\Sigma}(y) \textrm{ for some real y} \}=\powerset(\mathbb{R})\cap K^\Sigma(\mathbb{R})$.
\end{center}
\end{theorem}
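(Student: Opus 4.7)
The plan is to prove both inclusions separately. For the inclusion $\supseteq$, fix $A\in K^\Sigma(\mathbb{R})\cap\powerset(\mathbb{R})$. Then $A$ appears in some $\Sigma$-premouse $\mathcal{N}\trianglelefteq K^\Sigma(\mathbb{R})$ over $\mathbb{R}$. Since, by Definitions~\ref{modelOpSoa}--\ref{PotJPremice2} together with Lemma~\ref{ClosedUnderSigma}, the stratification of $\mathcal{N}$ is canonically determined from $\Sigma$ and $\mathbb{R}$, the initial segment $\mathcal{N}$ is $\textrm{OD}_\Sigma$ (modulo its ordinal height, which is an ordinal parameter). A standard reflection argument then shows that $A$, being definable over $\mathcal{N}$ from finitely many parameters, is $\textrm{OD}_\Sigma(y)$ for some real $y$ coding the real parameters used to define $A$ over $\mathcal{N}$.

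For the harder inclusion $\subseteq$, fix $A\in\textrm{OD}_\Sigma(y)$ for some $y\in\mathbb{R}$. The plan is to produce an $\infty$-Borel code for $A$ inside $K^\Sigma(\mathbb{R})$, and then invoke clause~(1) of Definition~\ref{AD+} inside the $\textsf{AD}^+$-model $K^\Sigma(\mathbb{R})$ (the latter obtained via standard core model induction, using that $K^\Sigma(\mathbb{R})$ is defined) to conclude $A\in K^\Sigma(\mathbb{R})$. First, by the Vopenka-style argument of Theorem~\ref{WoodinVopenka} relativized to $\Sigma$ and $y$, select an $\infty$-Borel code $(S,\varphi)$ for $A$ with $S$ a bounded subset of $\Theta$ lying in $\textrm{HOD}_{\Sigma,y}$. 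Second, use the hypothesis $\mathcal{M}_\infty(\P,\Sigma)|\theta_\alpha=\textrm{HOD}|\theta_\alpha$ together with the realization of $\mathcal{M}_\infty(\P,\Sigma)$ as the direct limit of $\Sigma$-iterates of $\P$ developed in \cite{ATHM}, to place $\textrm{HOD}|\theta_\alpha$ as a hod premouse constructible inside $K^\Sigma(\mathbb{R})$; this relies on branch condensation and fullness preservation of $\Sigma$ (Theorem~\ref{branch condensation's consequences}). Third, in the successor case $\Theta=\theta_{\alpha+1}$, invoke \textsf{SMC}: any $\textrm{OD}_{\Sigma,y}$ bounded subset of $\Theta$ is captured by a $\Sigma_{\P(\alpha)}$-mouse over $\langle \P(\alpha),z\rangle$ for some real $z$ coding $y$ together with an appropriate tail of the direct limit system. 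Combining these steps yields $S\in K^\Sigma(\mathbb{R})$, and hence $A\in K^\Sigma(\mathbb{R})$.

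The main obstacle will be step two, where one must verify that $\textrm{HOD}|\theta_\alpha$, identified with $\mathcal{M}_\infty(\P,\Sigma)|\theta_\alpha$, is correctly computed inside $K^\Sigma(\mathbb{R})$, in the sense that bounded $\textrm{OD}_{\Sigma,y}$ subsets of $\theta_\alpha$ from $V$ also belong to $K^\Sigma(\mathbb{R})$. This amounts to exhibiting cofinally many $\Sigma$-iterates of $\P$ inside $K^\Sigma(\mathbb{R})$ whose direct limit realizes $\mathcal{M}_\infty(\P,\Sigma)|\theta_\alpha$; here one uses that $K^\Sigma(\mathbb{R})$ is closed under the $\Sigma$-operator (Lemma~\ref{ClosedUnderSigma}) and that fullness preservation of $\Sigma$ allows the levels of $\mathcal{M}_\infty(\P,\Sigma)|\theta_\alpha$ to be identified with internally-computed $Lp^{\Gamma,\Sigma}$-stacks as in Definition~\ref{Gammafpr}. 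In the successor case, the additional delicate step is capturing the top Woodin $\theta_\alpha$ via \textsf{SMC} applied to $\Sigma_{\P(\alpha)}$; this is where the hypothesis $\Theta=\theta_{\alpha+1}$ (or the degenerate case $\Theta=\theta_0$, where $\P=\emptyset$ and $\Sigma=\emptyset$, and \textsf{SMC} reduces to ordinary \textsf{MC}) is fully used.
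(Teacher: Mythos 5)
The paper states this result without proof, attributing it to Sargsyan and Steel and referring to \cite{DMATM} only for the $\Theta=\theta_0$ case, so there is no in-paper argument against which to compare your proposal; judging it on its own terms, the easy $\supseteq$ inclusion is fine.

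For the $\subseteq$ inclusion, the template (find an $\infty$-Borel code $S\subseteq\eta<\Theta$ for $A$ that is $\textrm{OD}_\Sigma(y)$, get $S$ into $K^\Sigma(\mathbb{R})$, then evaluate the code there) is a reasonable skeleton, but the crucial middle step has a genuine gap. You invoke \textsf{SMC} to capture an $\textrm{OD}_{\Sigma,y}$ \emph{bounded subset of $\Theta$} by a $\Sigma_{\P(\alpha)}$-mouse; but \textsf{SMC} as given in Definition~\ref{smc} governs only \emph{reals}: $x\in OD_\Sigma(y)$ iff $x$ lies in some $\Sigma$-mouse over $\langle\P,y\rangle$. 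There is no analogue for arbitrary bounded subsets of $\Theta$ available to quote here --- getting such a set of ordinals into a $\Sigma$-mouse is essentially the content of the theorem you are trying to prove, not a tool you can borrow. Compounding this, your step two only places $\textrm{HOD}|\theta_\alpha$ into $K^\Sigma(\mathbb{R})$, whereas $S$ can sit anywhere in $[\theta_\alpha,\Theta)=[\theta_\alpha,\theta_{\alpha+1})$, which is exactly the region about which the hypothesis $\M_\infty(\P,\Sigma)|\theta_\alpha=\textrm{HOD}|\theta_\alpha$ says nothing; and what you actually need to control is $\textrm{HOD}_{\Sigma,y}$, not $\textrm{HOD}$. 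The machinery the paper assembles around this theorem --- the $\Sigma$-relativized Prikry forcing, the realization of $K^\Sigma(\mathbb{R})$ as a new derived model of $\Q_\infty^+$, and the reflection argument of Proposition~\ref{ASatAD+} --- all point to a derived-model/reflection route for results of this kind rather than an $\infty$-Borel code argument. Your outline would need a genuinely new ingredient both to bridge the $[\theta_\alpha,\Theta)$ gap and to replace the misapplied \textsf{SMC} step.
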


\subsection{A Prikry forcing}
Let $(\P,\Sigma)$ be a hod pair such that $\Sigma$ has branch condensation and $K^\Sigma(\mathbb{R})$ is defined. We briefly describe a notion of Prikry forcing that will be useful in our HOD computation. The forcing $\mathbb{P}$ described here is defined in $K^\Sigma(\mathbb{R})$ and is a modification of the forcing defined in Section 6.6 of \cite{steel2012hod}. All facts about this forcing are proved similarly as those in Section 6.6 of \cite{steel2012hod} so we omit all proofs.

First, let $T$ be the tree of a $\Sigma^2_1(\Sigma)$ scale on a universal $\Sigma^2_1$ set $U$. Write $\P_x$ for the $\Sigma$-premouse coded by the real $x$. Let $a$ be countable transitive, $x \in \mathbb{R}$ such that $a$ is coded by a real recursive in $x$. A normal iteration tree $\U$ on a 0-suitable $\Sigma$-premouse $\Q$ (see Definition \ref{suitability}, where $(\Q,\Sigma)$ is defined to be $0$-suitable) is short if for all limit $\xi \leq lh(\U)$, $Lp^\Sigma(\M(\U|\xi)) \vDash \delta(\U|\xi)$ is not Woodin. Otherwise, we say that $\U$ is maximal. We say that a 0-suitable $\P_z$ is short-tree iterable by $\Lambda$ if for any short tree $\T$ on $\P_z$, $b = \Lambda(\T)$ is such that $\M^{\T}_b$ is 0-suitable, and $b$ has a $Q$-structure $\Q$ such that $\Q \trianglelefteq \M^{\T}_b$. Put
\begin{center}
$\F^x_a$ = $\left\{ \P_z \ | \ z \leq_T x, \P_z \textrm{ is a short-tree iterable $0$-suitable $\Sigma$-premouse over } a\right\}$
\end{center}
For each $a$, for $x$ in the cone in the previous claim, working in $L[T,x]$, we can simultaneously compare all $\P_z \in \F^x_a$ (using their short-tree iteration strategy) while doing the genericity iterations to make all $y$ such that $y \leq_T x$ generic over the common part of the final model $\Q^{x,-}_a$. This process (hence $\Q^{x,-}_a$) depends only on the Turing degree of $x$. Put
\begin{center}
$\Q^x_a = Lp^\Sigma_\omega(\Q^{x,-}_a)$, and $\delta^x_a = o(\Q^{x,-}_a)$.
\end{center}
By the above discussion, $\Q^x_a, \delta^x_a$ depend only on the Turing degree of $x$. Here are some properties obtained from the above process.
\begin{enumerate}
\item $\F^x_a \neq \emptyset$ for $x$ of sufficiently large degree;
\item $\Q^{x,-}_a$ is full (no levels of $\Q^x_a$ project strictly below $\delta^x_a$);
\item $\Q^x_a \vDash \delta^x_a$ is Woodin;
\item $\powerset{(a)} \cap \Q^x_a = \powerset{(a)} \cap OD_T(a \cup \{a\})$ and $\powerset{(\delta^x_a)} \cap \Q^x_a = \powerset{(\delta^x_a)} \cap OD_T(Q^{x,-}_a \cup \{Q^{x,-}_a\})$;
\item $\delta^x_a = \omega_1^{L[T,x]}$.
\end{enumerate}
Now for an increasing sequence $\vec{d} = \langle d_0, ..., d_n\rangle$ of Turing degrees, and $a$ countable transitive, set
\begin{center}
$\Q_0(a) = \Q^{d_0}_a$ and $\Q_{i+1}(a) = \Q^{d_{i+1}}_{\Q_i(a)}$ for $i < n$
\end{center}

We assume from here on that the degrees $d_{i+1}$'s are such that $\Q^{d_{i+1}}_{\Q_i(a)}$ are defined. For $\vec{d}$ as above, write $\Q^{\vec{d}}_i(a) = \Q_i(a)$ even though $\Q_i(a)$ only depends on $\vec{d}|(i+1)$. Let $\mu$ be the cone measure on the Turing degrees. We can then define our Prikry forcing $\mathbb{P}$ (over $L(T,\mathbb{R})$) as follows. A condition $(p,S) \in \mathbb{P}$ just in case $p = \langle\Q^{\vec{d}}_0(a),..., \Q^{\vec{d}}_n(a)\rangle$ for some $\vec{d}$, $S \in L(T,\mathbb{R})$ is a ``measure-one tree" consisting of stems $q$ which either are initial segments or end-extensions of $p$ and such that $(\forall q = \langle\Q^{\vec{e}}_0(a),..., \Q^{\vec{e}}_k(a)\rangle \in S)(\forall^*_\mu d)$ let $\vec{f} = \langle\vec{e}(0),...,\vec{e}(k), d\rangle$, we have $\langle\Q^{\vec{f}}_0(a),...,\Q^{\vec{f}}_{(k+1)}(a)\rangle \in S$. The ordering on $\mathbb{P}$ is defined as follows.
\begin{center}
$(p,S) \preccurlyeq (q, W)$ iff $p$ end-extends $q$, $S \subseteq W$, and $\forall n \in $ dom$(p)$$\backslash$ dom$(q)$ $(p|(n+1) \in W)$.
\end{center}
$\mathbb{P}$ has the Prikry property in $K^\Sigma(\mathbb{R})$. Let $G$ be a $\mathbb{P}$-generic over $K^\Sigma(\mathbb{R})$, $\langle \Q_i \ | \ i < \omega\rangle = \cup \{p \ | \ \exists \vec{X}(p,\vec{X}) \in G\}$ and $\Q_{\infty} = \bigcup_i \Q_i$. Let $\delta_i$ be the largest Woodin cardinal of $\Q_i$. Then 
\begin{center}
$P(\delta_i) \cap L[T, \langle \Q_i \ | \ i < \omega\rangle] \subseteq \Q_i$,
\end{center} 
and
\begin{center}
$L[T, \Q_{\infty}] = L[T, \langle\Q_i \ | \ i < \omega\rangle] \vDash \delta_i$ is Woodin.
\end{center}
\begin{definition}[Derived models]
\label{DerivedModel}
Suppose $M\vDash \textsf{ZFC}$ and $\lambda\in M$ is a limit of Woodin cardinals in $M$. Let $G\subseteq Col(\omega,<\lambda)$ be generic over $M$. Let $\mathbb{R}^*_G$ (or just $\mathbb{R}^*$) be the symmetric reals of $M[G]$ and $Hom^*_G$ (or just $Hom^*$\index{$Hom^*$}) be the set of $A\subseteq\mathbb{R}^*$ in $M(\mathbb{R}^*)$ such that there is a tree $T$ such that $A = p[T]\cap \mathbb{R}^*$ and there is some $\alpha < \lambda$ such that 
\begin{center}
$M[G\rest \alpha] \vDash ``T$ has a $<$-$\lambda$-complement".
\end{center} 
By the \textbf{old derived model}\index{old derived model} of $M$ at $\lambda$, denoted by $D(M,\lambda)$\index{$D(M,\lambda)$}, we mean the model $L(\mathbb{R}^*,Hom^*)$. By the \textbf{new derived model}\index{new derived model} of $M$ at $\lambda$, denoted by $D^+(M,\lambda)$\index{$D^+(M,\lambda)$}, we mean the model $L(\Gamma,\mathbb{R}^*)$, where $\Gamma$ is the closure under Wadge reducibility of the set of $A\in M(\mathbb{R}^*)\cap \powerset(\mathbb{R}^*)$ such that $L(A,\mathbb{R}^*) \vDash \textsf{AD}^+$.
\end{definition}
\begin{theorem}[Woodin]
\label{DMT}
Let $M$ be a model of $\textsf{ZFC}$ and $\lambda\in M$ be a limit of Woodin cardinals of $M$. Then $D(M,\lambda)\vDash \textsf{AD}^+$, $D^+(M,\lambda)\vDash \textsf{AD}^+$. Furthermore, $Hom^*$ is the pointclass of Suslin co-Suslin sets of $D^+(M,\lambda)$.
\end{theorem}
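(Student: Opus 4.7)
The plan is to follow Woodin's original three-stage argument for the Derived Model Theorem. First, $\textsf{ZF}+\textsf{DC}_{\mathbb{R}}$ in both $D(M,\lambda)$ and $D^+(M,\lambda)$ is a routine consequence of the symmetry of the Levy collapse $\mathrm{Col}(\omega,{<}\lambda)$: every $x\in\mathbb{R}^*$ is added by some initial segment $G\restriction\alpha$, and both target models are symmetric inner models of $M(\mathbb{R}^*)$. Next, for $\textsf{AD}$ in $D(M,\lambda)$ the main input is the Martin--Steel theorem on determinacy from Woodin cardinals: if $A=p[T]\cap\mathbb{R}^*\in Hom^*$ with $T,U$ a ${<}\lambda$-absolutely complementing pair appearing already in $M[G\restriction\alpha_0]$, then a Woodin cardinal $\delta\in(\alpha_0,\lambda)$ of $M$ allows one to produce inside $M[G\restriction\delta^+]$ a winning strategy for one side of the auxiliary game coded by $T,U$; that strategy projects to a winning strategy for $A$ in $D(M,\lambda)$, so every $Hom^*$-set is determined.

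To propagate determinacy from $Hom^*$ to all of $L(\mathbb{R}^*,Hom^*)$ I would invoke Steel's scales-from-periodicity machinery: one inductively produces scales on sets in $L(\mathbb{R}^*,Hom^*)$ whose norms are definable from $Hom^*$-sets and reals, and then a Moschovakis-style coding lemma relative to $Hom^*$ shows every set of reals in $L(\mathbb{R}^*,Hom^*)$ is Wadge-reducible to a $Hom^*$-set and hence determined. For the remaining clauses of $\textsf{AD}^+$, every $Hom^*$-set has a natural $\infty$-Borel code given by its $Hom^*$-tree (a set of ordinals together with the formula ``there is a cofinal branch''), and this propagates to all sets of reals through the scale analysis. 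Ordinal determinacy for continuous preimages from $\lambda^\omega$ with $\lambda<\Theta^{D(M,\lambda)}$ reduces to ordinary $\omega$-determinacy by absorbing $\lambda$ into a real at a sufficiently late symmetric stage $M[G\restriction\alpha]$ and applying $\textsf{AD}$ there.

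For $D^+(M,\lambda)=L(\Gamma,\mathbb{R}^*)$, the definition of $\Gamma$ directly builds in $\textsf{AD}^+$: $\Gamma$ is the Wadge-closure of the sets $A\in M(\mathbb{R}^*)$ with $L(A,\mathbb{R}^*)\vDash\textsf{AD}^+$, and a reflection argument using the closure properties of $\Gamma$ verifies that the union model again satisfies $\textsf{AD}^+$. The easy direction of the final claim, that every $Hom^*$-set is Suslin co-Suslin in $D^+(M,\lambda)$, is immediate from the pair of $Hom^*$-trees. The reverse direction, which I expect to be the main obstacle, asserts that any Suslin co-Suslin set of $D^+(M,\lambda)$ witnessed by trees $T',U'$ already lies in $Hom^*$; here one must show that $T',U'$ can be chosen to be ${<}\lambda$-absolutely complemented in some intermediate $M[G\restriction\alpha]$. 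This is done by using the Woodin cardinals of $M$ lying above the stage where $T',U'$ first appear to reflect absolute complementation down into $M[G\restriction\alpha]$, thereby placing the set into $Hom^*$. The technically delicate step is tying the internal Woodin structure of $M$ to the external Wadge structure of $D^+(M,\lambda)$ so that scales definable in $D^+(M,\lambda)$ are recognized, via the symmetric collapse, as witnesses to $Hom^*$-membership.
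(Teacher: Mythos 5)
The paper does not prove Theorem~\ref{DMT}; it is cited as a black box due to Woodin (and Steel's notes on the Derived Model Theorem are the standard reference), so there is no internal argument to compare against. Evaluating your sketch on its own terms, the opening stage ($\textsf{ZF}+\textsf{DC}_\mathbb{R}$ from symmetry of the collapse) and the determinacy of $Hom^*$ sets via Martin--Steel are correct and standard, as is the ``easy direction'' of the last clause.

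The genuine gap is the propagation step. You claim that a coding-lemma argument shows every set of reals in $L(\mathbb{R}^*,Hom^*)$ is Wadge-reducible to a $Hom^*$-set, hence determined. This is false. $Hom^*$ is already Wadge-closed, so the sets Wadge-reducible to $Hom^*$-sets are exactly the $Hom^*$-sets; but the whole point of the final clause of the theorem is that $Hom^*$ is \emph{only} the Suslin--co-Suslin pointclass of $D^+(M,\lambda)$, and in general the Suslin--co-Suslin sets are a proper initial segment of the Wadge hierarchy (in the $L(\mathbb{R})$ analogue, $\Pi^2_1$ sets lie strictly above the $\Sigma^2_1 = $ Suslin sets). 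So there are sets of reals in $L(\mathbb{R}^*,Hom^*)$ that are not Wadge-below any $Hom^*$-set, and your argument gives no reason these are determined. The actual proof of $\textsf{AD}$ in the derived model is substantially harder: one fixes a minimal failure of determinacy in $L(\mathbb{R}^*,Hom^*)$, runs a scales analysis at the relevant gap to produce a self-justifying system, and then uses the Woodin cardinals of $M$ below $\lambda$ (via genericity iterations / tree-production) to show the offending set is in fact homogeneously Suslin, hence in $Hom^*$, contradicting minimality. Your invocation of ``scales-from-periodicity'' is pointing in the right direction, but the conclusion you draw from it (cofinality of $Hom^*$ in the Wadge order of the derived model) is the wrong one, and without it the argument does not close. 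A similar (smaller) gap appears in the $D^+$ paragraph: knowing each $L(A,\mathbb{R}^*)\vDash\textsf{AD}^+$ for $A\in\Gamma$ does not by itself give $L(\Gamma,\mathbb{R}^*)\vDash\textsf{AD}^+$ at limit stages of the Wadge hierarchy without an additional coherence/reflection argument.
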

Using the proof of Theorem 3.1 from \cite{steel08} and the definition of $K^\Sigma(\mathbb{R})$ defined above, we get that in $K^\Sigma(\mathbb{R})[G]$, there is a $\Sigma$-premouse $\Q_\infty^+$ extending $\Q_\infty$ such that $K^\Sigma(\mathbb{R})$ can be realized as a (new) derived model of $\Q_\infty^+$ at $\omega_1^V$, which is the limit of Woodin cardinals of $\Q_\infty^+$. Roughly speaking, the $\Sigma$-premouse $\Q_\infty^+$ is the union of $\Sigma$-premice $\R$ over $\Q_\infty$, where $\R$ is an S-translation of some $\M\lhd K^\Sigma(\mathbb{R})$ (see \cite{ATHM} for more on S-translations).

\section{The $\Theta = \theta_0$ case} 
\subsection{Definitions and notations}

\begin{definition}($k$-suitable premouse) 
\label{suitability}
Let $0 \leq k < \omega$ and $\Gamma$ be an inductive-like pointclass. A premouse $\N$ is $k$-suitable with respect to $\Gamma$ iff there is a strictly increasing sequence $\langle \delta_i \ | \ i \leq k \rangle$ such that
\begin{enumerate}
\item for all $\delta$, $\N$ $\vDash``\delta$ is Woodin" iff $\delta = \delta_i$ for some $i < 1 + k$;
\item $\textrm{OR}^\N = sup(\{(\delta^{+n}_k)^N | n < \omega\})$;
\item $Lp^{\Gamma}(\N|\xi) \unlhd \N$ for all cutpoints $\xi$ of $\N$ where $Lp^{\Gamma}(\N|\xi) = \cup\{\M \ | \ \N|\xi\unlhd \M \wedge \rho(\M) = \xi \wedge \M \textrm{ has iteration strategy in } \Gamma\}$;
\item if $\xi \in \textrm{OR} \cap \N$ and $\xi \neq \delta_i$ for all i, then $Lp^\Gamma(\N|\xi) \vDash " \xi$ is not Woodin."
\end{enumerate}
\end{definition}
\begin{definition} Let $\N$ be as above and $A \subseteq \mathbb{R}$. Then $\tau^\N_{A,\nu}$ is the unique standard term $\sigma \in \N$ such that $\sigma^g = A \cap \N[g]$ for all g generic over $\N$ for $Col(\omega, \nu)$, if such a term exists. We say that $\N$ term captures $A$ iff $\tau^\N_{A,\nu}$ exists for all cardinals $\nu$ of $\N$.
\end{definition}
\indent If $\N, \Gamma$ are as in Definition 2.1 and $A \in \Gamma$, then \cite{CMI} shows that $\N$ term captures $A$. Later on, if the context is clear, we'll simply say capture instead of term capture or Suslin capture. For a complete definition of ``$\N$ is $A$-iterable", see \cite{Scalesweakgap}. Roughly speaking, $\N$ is $A$-iterable if $\N$ term captures $A$ and 
\begin{enumerate}
\item for any maximal tree $\mathcal{T}$ (or stack $\vec{\mathcal{T}}$) on $\N$, there is a cofinal branch $b$ such that the branch embedding $i^{\mathcal{T}}_b=_{def} i$ moves the term relation for $A$ correctly i.e., for any $\kappa$ cardinal in $\N$, $i(\tau^\N_{A,\kappa}) = \tau^{\M^{\mathcal{T}}_b}_{A,i(\kappa)}$;
\item if $\mathcal{T}$ on $\N$ is short, then there is a branch $b$ such that $\Q(b,\mathcal{T})$\footnote{$\Q(b,\mathcal{T})$ is called the $\Q$-structure and is defined to be the least initial segment of $\M^{\mathcal{T}}_b$ that defines the failure of Woodinness of $\delta(\mathcal{T})$.} exists and $\Q(b,\mathcal{T})\unlhd Lp^{\Gamma}(\M(\mathcal{T}))$\footnote{This implicitly assumes that $\Q(b,\mathcal{T})$ has no extenders overlapping $\delta(\mathcal{T})$. We're only interested in trees $\mathcal{T}$ arising from comparisons between suitable mice and for such trees, $\Q$ structures have no extenders overlapping $\delta(\mathcal{T})$.}; we say that $\mathcal{T}^\smallfrown b$ is $\Gamma$-guided.
\end{enumerate}
This obviously generalizes to define $\vec{A}$-iterability for any finite sequence $\vec{A}$.
\begin{definition} Let $\N$ be $k$-suitable with respect to $\Sigma^2_1$ and $k < \omega$. Let $\vec{A} = \langle A_i \ | \ i \leq n\rangle$ be a sequence of OD sets of reals and $\nu = (\delta_k^{+\omega})^{\N}$. Then  
\begin{enumerate}
\item $\gamma^\N_{\vec{A}}$ = $\sup(\{\xi | \xi$ is definable over $(\N|\nu, \tau^\N_{A_0,\delta_k}, ... , \tau^\N_{A_n,\delta_k})\}\cap \delta_0)$;
\item $H^\N_{\vec{A}}$ = $Hull^{\N}(\gamma^\N_{\vec{A}} \cup \{\tau^\N_{A_0,\delta_k}, ... , \tau^\N_{A_n,\delta_k}\})$, where we take the full elementary hull without collapsing.
\end{enumerate}
\end{definition}
From now on, we will write $\tau^{\N}_A$ without further clarifying that this stands for $\tau^{\N}_{A,\delta}$ where $\delta$ is the largest Woodin cardinal of $\N$. We'll also write $\tau^\N_{A,l}$ for $\tau^{\N}_{A,\delta^\N_l}$ for $l\leq k$. Also, we'll occasionally say $k$-suitable without specifying the pointclass $\Gamma$.
\begin{definition}
\label{strongAiterability}
Let $\N$ be $k$-suitable with respect to some pointclass $\Gamma$ and $A\in \Gamma$. $\N$ is strongly $A$-iterable if $\N$ is $A$-iterable and for any suitable $\M$ such that if $i,j: \N\rightarrow \M$ are two $A$-iteration maps then $i\rest H^\N_A = j\rest H^\M_A$. 
\end{definition}
\begin{definition}
\label{guiding}
Let $\Gamma$ be an inductive-like pointclass and $\N$ be $k$-suitable with respect to $\Gamma$ for some $k$. Let $\mathcal{A}$ be a countable collection of sets of reals in $\Gamma \cup \breve{\Gamma}$. We say $\mathcal{A}$ guides a strategy for $\N$ below $\delta_0^\N$ if whenever $\mathcal{T}$ is a countable, normal iteration tree on $\N$ based on $\delta_0^\N$ of limit length, then
\begin{enumerate}
\item if $\mathcal{T}$ is short, then there is a unique cofinal branch $b$ such that $\mathcal{Q}(b,\mathcal{T})$ exists and $\mathcal{Q}(b,\mathcal{T}) \unlhd Lp^{\Gamma}(\mathcal{M}(\mathcal{T}))$\footnote{Again we disregard the case where $\Q$-structures have overlapping extenders.}, and
\item if $\mathcal{T}$ is maximal, then there is a unique nondropping branch $b$ such that $i^{\mathcal{T}}_b(\tau^\N_{A,\mu}) = \tau^{\mathcal{M}^{\mathcal{T}}_b}_{A,i_b(\mu)}$ for all $A \in \mathcal{A}$ and cardinals $\mu \geq \delta_k^\N$ of $\N$ and $\delta(\T) = \sup\{\gamma^{\M^{\mathcal{T}}_b}_{A,0} \ | \ A \in \mathcal{A}\}$ where $\delta = i^{\mathcal{T}}_b(\delta_0)$.
\end{enumerate}
We can also define an $\mathcal{A}$-guided strategy that acts on finite stacks of normal trees in a similar fashion.
\end{definition}
\indent The most important instance of the above definition used in this paper is when $\mathcal{A}$ is a self-justifying-system that seals a $\Sigma_1$ gap. A strategy guided by such an $\mathcal{A}$ has many desirable properties.
%\\
%\indent We state a theorem of Steel's which essentially says that Mouse Capturing implies Mouse Capturing for mice over $\mathbb{R}$.
%\begin{theorem}[Steel, see \cite{DMATM}]
%\label{V is KR} Assume $\textsf{AD}^+ + V = L(\powerset(\mathbb{R})) + \Theta = \theta_0 + \textsf{\textsf{MC}}$\footnote{\textsf{MC} stands for Mouse Capturing, which states that if $x, y \in \mathbb{R}$ and $x \in OD(y)$ then $x$ is in a sound mouse over $y$ projecting to $y$.}. Then every set of reals is in a (countably iterable) mouse over $\mathbb{R}$ projecting to $\mathbb{R}$. In other words, $V = K(\mathbb{R})$, where 
%\begin{equation*}
%K(\mathbb{R}) = L(\cup \{M \ | \ M \textrm{ is }\mathbb{R}\textrm{-sound}, \ \rho_{\omega}(M) = \mathbb{R}, \textrm{ and M is countably iterable}\}). 
%\end{equation*}
%\end{theorem}
\subsection{The computation}
\noindent Now let $\mathcal{F} = \{(\M,\vec{A}) \ | \ \vec{A}$ is a finite sequence of OD sets of reals and $\M$ is $k$-suitable for some $k$ and is strongly $\vec{A}$-iterable$\}$. We say $(\M,\vec{A}) \leq_{\mathcal{F}} (\N,\vec{B})$ if $\vec{A}\subseteq \vec{B}$ and $\M$ iterates to a suitable initial segment of $\N$, say $\N^-$, via its iteration strategy that respects $\vec{A}$. We then let $\pi_{(\M,\vec{A}),(\N,\vec{B})}:H^\M_{\vec{A}} \rightarrow H^{\N^-}_{\vec{B}}$ be the unique map. That is, given any two different iteration maps $i_0,i_1:\M \rightarrow \N^-$ according to $\M$'s iteration strategy, by strong $\vec{A}$-iterability, $i_0 \rest H^\M_{\vec{A}} = i_1 \rest H^\M_{\vec{A}}$, so the map $\pi_{(\M,\vec{A}),(\N,\vec{B})}$ is well-defined. The following theorem is basically due to Woodin. We just sketch the proof and give more details in the proof of Proposition \ref{ASatAD+}.
\begin{theorem} 
\label{iterability 0}
Assume $V=L(\powerset(\mathbb{R}))+ \textsf{AD}^+ + \textsf{MC}+\Theta = \theta_0$. Given any OD set of reals A and any n $\in \omega$, there is an n-suitable M that is strongly A-iterable. The same conclusion holds for any finite sequence $\vec{A}$ of OD sets of reals.
\end{theorem}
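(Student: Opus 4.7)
The plan is to construct $\M$ by a fully backgrounded $L[\vec E]$-construction inside a coarse Suslin-capturing model and then to certify strong $\vec A$-iterability by enlarging $\vec A$ to a self-justifying system (sjs) sealing a $\Sigma_1$-gap of $K(\mathbb{R})$. Since $\Theta=\theta_0$ every OD set of reals is $\Sigma^2_1$ in a real, so I would fix $\Gamma=\Sigma^2_1$; by Theorem~\ref{n*x} there is a coarse tuple $(\mathcal{N}^*_x,\mathcal{M}_x,\delta_x,\Sigma_x)$ Suslin-capturing a universal $\Gamma$-set, and by iterating $\mathcal{M}_x$ one may arrange as many Woodins below $\delta_x$ as desired.

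First I would run the fully backgrounded $L[\vec E]$-construction inside $\mathcal{N}^*_x$ through $n+1$ Woodin cardinals $\delta_0<\cdots<\delta_n<\delta_x$ and close the resulting model under $Lp^\Gamma$. Clauses (1)--(2) of Definition~\ref{suitability} would hold by construction; clauses (3)--(4) follow from $\Gamma$-fullness of the output, which is a consequence of the fact that $\mathcal{N}^*_x$ locally Suslin-captures a universal $\Gamma$-set, so $Lp^\Gamma$ is computed correctly in $\mathcal{N}^*_x$ and its $\Sigma_x$-iterates. This yields an $n$-suitable $\M$.

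Next I would enlarge $\vec A$ to a countable sjs $\mathcal{A}=\{A_i:i<\omega\}\supseteq\vec A$ of OD sets, with scales on each $A_i$ and its complement whose norms all lie in $\mathcal{A}$; this is a standard Woodin construction available in any $\textsf{AD}^+$ model, and strong $\mathcal{A}$-iterability will clearly imply strong $\vec A$-iterability. I would then define a partial iteration strategy $\Lambda$ on $\M$ in the style of Definition~\ref{guiding}: on a short tree $\mathcal{T}$, $\Lambda$ selects the unique branch whose $\Q$-structure lies in $Lp^\Gamma(\mathcal{M}(\mathcal{T}))$; on a maximal $\mathcal{T}$, $\Lambda$ selects the unique cofinal non-dropping branch $b$ that moves every $\tau^\M_{A_i,\mu}$ correctly. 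The short-tree branches exist by $\Gamma$-fullness of $\M$; existence and uniqueness of the maximal branch are Woodin's theorem, obtained by reading $b$ off $\mathcal{M}(\mathcal{T})$ via the sjs scales and observing that any two correct branches must both send $\gamma^\M_{\mathcal{A}}$ to $\delta(\mathcal{T})$ and act identically on the $\tau^\M_{A_i}$'s, hence agree.

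To upgrade $\vec A$-iterability to strong $\vec A$-iterability, I would argue that if $i,j:\M\to\N$ are two $\vec A$-iteration maps given by $\Lambda$, then correctness of the term moves forces $i(\tau^\M_{A_l,k})=\tau^\N_{A_l,i(\delta_k^\M)}=j(\tau^\M_{A_l,k})$ for each $A_l\in\vec A$ and each $k$; since $H^\M_{\vec A}$ is generated from $\gamma^\M_{\vec A}$ together with these term relations, and $\gamma^\M_{\vec A}$ is itself definable over $\M|(\delta_n^{+\omega})^\M$ from those very terms, $i$ and $j$ must coincide on $H^\M_{\vec A}$. The hard part of this plan is the maximal-tree case above --- producing and pinning down the $\mathcal{A}$-correct branch. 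This is the heart of the suitable-mouse framework and rests on the sjs property together with $\textsf{AD}^+$ reflection (Theorem~\ref{fundamental result of ad+}) and the Suslin-capturing setup from the first paragraph; extending from normal trees to finite stacks is then routine, since the tail of $\Lambda$ along any $\mathcal{A}$-guided iteration remains $\mathcal{A}$-guided.
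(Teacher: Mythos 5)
Your plan diverges from the paper in structure and, more importantly, breaks down at two points. First, the claim that under $\Theta=\theta_0$ every OD set of reals is $\Sigma^2_1$ in a real is false: $\utilde\delta^2_1 < \Theta$, while OD sets of reals are Wadge-cofinal in $\Theta=\theta_0$. So there are OD sets $A$ of Wadge rank far above anything captured by $\Sigma^2_1$, and a mouse $\M$ built inside an $\mathcal{N}^*_x$ that merely Suslin-captures a universal $\Sigma^2_1$ set need not carry term relations for such $A$ at all. The paper deals with this by running the whole argument at the level of a minimal bad $K(\mathbb{R})|\gamma$: there $\Gamma$ is taken to be $\Sigma_1^{K(\mathbb{R})|\bar\gamma}$ for the appropriate gap, not the global $\Sigma^2_1$, and $\Omega$ is chosen beyond $K(\mathbb{R})|(\gamma+1)$, so the objects you need really are captured.

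Second, the step ``enlarge $\vec A$ to a countable sjs $\mathcal{A}$ of OD sets'' is not a black-box Woodin construction; it is exactly the point at which the paper's $\Sigma_1$-reflection is doing real work. Taking $\gamma$ least with $K(\mathbb{R})|\gamma\vDash\phi$ forces $\gamma$ to end a proper weak gap $[\bar\gamma,\gamma]$, and the scales-at-the-end-of-a-gap machinery then produces an sjs of $OD^{K(\mathbb{R})|\gamma}$ sets sealing that gap and containing the offending $A$. Without that reflection you have no gap to seal and no guarantee that the sjs you want stays inside the OD sets (indeed, for $A$ close to the top of the OD Wadge hierarchy there is no room). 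Relatedly, your last paragraph defers the existence and uniqueness of the maximal branch to ``Woodin's theorem'' plus ``$\textsf{AD}^+$ reflection,'' but the paper never has to produce those branches directly in $V$: it constructs $\mathcal{P}$ and its strategy from the background strategy of a coarse $\Omega$-Woodin $N^*$, certifies that this strategy is $\langle A_i\rangle$-guided by term condensation, and then derives the contradiction that $K(\mathbb{R})|\gamma$ already sees $\mathcal{P}$ as strongly $A$-iterable. In short, the paper's proof-by-contradiction-via-reflection is not a stylistic alternative to your direct construction; it is what makes the sjs and the pointclass $\Gamma$ exist at the right level, and dropping it leaves a genuine gap in your argument.
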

\begin{proof} We'll prove the theorem for $n=1$. The other cases are similar. So suppose not. By \rthm{main theorem}, $V=K(\mathbb{R})$. Then $V \vDash \phi$ where $\phi = (\exists \alpha) \ (K(\mathbb{R})|\alpha \vDash ``\textsf{ZF}^- + \Theta$ exists + $(\exists A) \ (A$ is OD and there is no 1-suitable strongly $A$-iterable mouse))".
\\
\indent Let $\gamma < \undertilde{\delta}^2_1$ be least such that $K(\mathbb{R})|\gamma \vDash \phi$. Such a $\gamma$ exists by $\Sigma_1$-reflection, i.e. Theorem \ref{fundamental result of ad+}. Then it is easy to see that $\gamma$ ends a proper weak gap, say $[\overline{\gamma},\gamma]$ for some $\overline{\gamma}<\gamma$. Fix the least such $A$ as above. By \cite{Scalesendgap} and the minimality of $\gamma$, we get a self-justifying-system (sjs) \index{self-justifying-system, sjs}$\langle A_i \ | \ i < \omega \rangle$ of OD$^{K(\mathbb{R})|\gamma}$ sets of reals in $K(\mathbb{R})|\gamma$ that seals the gap\footnote{This means that for all $i$, $\neg A_i$ and a scale for $A_i$ are in $\langle A_i \ | \ i < \omega \rangle$. Furthermore, the $A_i$'s are cofinal in the Wadge hierarchy of $K(\mathbb{R})|\gamma$.}. We may and do assume $A = A_0$. Let $\Gamma = \Sigma_1^{K(\mathbb{R})|\overline{\gamma}}$ and $\Omega$ a good pointclass beyond $K(\mathbb{R})|(\gamma+1)$, i.e. $\powerset(\mathbb{R})^{K(\mathbb{R})|(\gamma+1)} \subsetneq \undertilde{\Delta}_{\Omega}$. $\Omega$ exists because $\gamma <$ $\undertilde{\delta}^2_1$. Let $N^*$ be a coarse $\Omega$-Woodin, fully iterable mouse. Such an $N^*$ exists by \cite{DMATM} or by Theorem \ref{n*x}. In fact by Theorem \ref{n*x}, one can choose $N^*$ that Suslin captures $\Omega$ and the sequence $\langle A_i \ | \ i < \omega\rangle$. Also by \cite{DMATM}, there are club-in-$\textrm{OR}^{N^*}$ many $\Gamma$-Woodin cardinals in $N^*$. It can be shown that the $L[E]$-construction done inside $N^*$ reaches a $\mathcal{P}$ such that $\mathcal{P}$ is 1-suitable with respect to $\Gamma$ (hence has canonical terms for the $A_i$'s) and $\mathcal{P} \vDash ``\delta_0$ and $\delta_1$ are Woodin cardinals" where $\delta_0$ and $\delta_1$ are the first two $\Gamma$-Woodin cardinals in $N^*$. Let $\Sigma$ be the strategy for $\mathcal{P}$ induced by that of $N^*$. By lifting up to the background strategy and using term condensation for the self-justifying-system, we get that $\Sigma$ is guided by $\langle A_i \ | \ i < \omega \rangle$, hence ($\mathcal{P}, \Sigma$) is strongly $A$-iterable. But then $K(\mathbb{R})|\gamma \vDash$"$\mathcal{P}$ is strongly $A$-iterable." This is a contradiction.
\end{proof}
The theorem implies $\mathcal{F} \neq \varnothing$. Moreover, we have that $\mathcal{F}$ is a directed system because given any $(\M,\vec{A}), (\N,\vec{B}) \in \mathcal{F}$, we can do a simultaneous comparison of $(\M,\vec{A}), (\N,\vec{B})$, and some $(\P,\vec{A}\oplus\vec{B})\in\mathcal{F}$ using their iteration strategies to obtain some $(\Q, \vec{A}\oplus \vec{B})\in \mathcal{F}$ such that $(\M,\vec{A}), (\N,\vec{B})\leq_\mathcal{F} (\Q,\vec{A}\oplus \vec{B})$. We summarize facts about $\M_\infty$ proved in \cite{CMI} and \cite{steel2012hod}. These results are due to Woodin.
\begin{lemma}
\label{wellfoundedness}
\begin{enumerate}
\item $\M_\infty$ is wellfounded.
\item $\mathcal{M}_\infty$ has $\omega$ Woodin cardinals $(\delta^{\mathcal{M}_\infty}_i)_{i<\omega}$ cofinal in its ordinals.
\item $\theta_0 = \delta^{\mathcal{M}_\infty}_0$ and $\textrm{HOD}|\theta_0 = \mathcal{M}_\infty|\delta^{\mathcal{M}_\infty}_0$.
\end{enumerate}
\end{lemma}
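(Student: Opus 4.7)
The plan is to handle the three parts in order, leveraging the Prikry forcing $\mathbb{P}$ from the previous subsection (specialized to $\Sigma = \emptyset$, so $K^\Sigma(\mathbb{R}) = K(\mathbb{R})$ by \rthm{main theorem}). For wellfoundedness, I would take a $\mathbb{P}$-generic $G$ over $K(\mathbb{R})$, producing the sequence $\langle \Q_i \mid i < \omega\rangle$, its union $\Q_\infty$ with Woodin cardinals $\langle \delta_i \mid i < \omega\rangle$ cofinal in its ordinals, and the extension $\Q_\infty^+$ in which $K(\mathbb{R})$ is realized as a derived model at $\omega_1^V$. The structure of $\Q^{x,-}_a$ guarantees that each $\Q_i$ is $i$-suitable and strongly $\vec{A}$-iterable for every finite tuple of OD sets $\vec{A}$ (by absoluteness of the simultaneous comparison to $L[T,x]$ and the argument of \rthm{iterability 0}). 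For each $(\M, \vec{A}) \in \mathcal{F}$ I would produce an embedding $\sigma_{\M,\vec{A}} : \M \to \Q_i$ (for some $i \geq k$, where $\M$ is $k$-suitable) by comparing $\M$ against $\Q_i$; the strong $\vec{A}$-iterability ensures the induced maps on hulls $H^\M_{\vec{A}}$ agree with the direct-limit maps $\pi_{(\M,\vec{A}),(\N,\vec{B})}$. Compatibility of the $\sigma_{\M,\vec{A}}$ then yields a well-defined embedding of $\M_\infty$ into $\Q_\infty$, and wellfoundedness of $\Q_\infty$ transfers to $\M_\infty$.

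For (2), each $(\M,\vec{A}) \in \mathcal{F}$ has $\M$ that is $k$-suitable for some $k$, and thus carries Woodin cardinals $\delta_0^\M < \cdots < \delta_k^\M$ with $\mathrm{OR}^\M = \sup_n (\delta_k^\M)^{+n,\M}$. Define $\delta_i^{\M_\infty}$ as the direct-limit image of $\delta_i^\M$ for any $k$-suitable $\M$ with $k \geq i$; elementarity of the $\pi_{(\M,\vec{A}),(\N,\vec{B})}$ makes this definition independent of choice and forces $\delta_i^{\M_\infty}$ to be Woodin in $\M_\infty$. By \rthm{iterability 0}, for every $n < \omega$ there is an $n$-suitable strongly $\vec{A}$-iterable mouse (for any finite OD sequence $\vec{A}$), so the $\delta_n^{\M_\infty}$ exist for all $n < \omega$. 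Cofinality in $\mathrm{OR}^{\M_\infty}$ follows because every ordinal of $\M_\infty$ is the image of an ordinal below $(\delta_k^\M)^{+\omega,\M}$ for some $k$-suitable $\M$ in $\mathcal{F}$, and this bounds are strictly below $\delta_{k+1}^{\M_\infty}$ once we move to a $(k+1)$-suitable strengthening.

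For (3), the inclusion $\M_\infty|\delta_0^{\M_\infty} \subseteq \textrm{HOD}|\theta_0$ is immediate since the directed system $\mathcal{F}$ and its limit maps are OD, so $\M_\infty \in \textrm{HOD}$, and $\delta_0^{\M_\infty}$ is below $\theta_0$ because $\theta_0$ is Woodin in $\textrm{HOD}$ by \rthm{succThetaWoodin} (applied at $\alpha=0$) while $\delta_0^{\M_\infty}$ is merely a regular cardinal of $\textrm{HOD}$ bounded by the Wadge rank of the system. For the reverse inclusion and the equality $\theta_0 = \delta_0^{\M_\infty}$, I would argue as follows: given any $\alpha < \theta_0$, fix an OD surjection $f : \mathbb{R} \twoheadrightarrow \alpha$ and consider the OD set $A_f = \{(x,y) : f(x) \in f(y)\}$. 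Applying \rthm{iterability 0} to $A_f$ (concatenated with an enumeration of prior OD data) produces $(\M, \vec{A}) \in \mathcal{F}$ term-capturing $A_f$; the direct-limit image of $\gamma^\M_{A_f}$ then embeds $\alpha$ into $\delta_0^{\M_\infty}$. Conversely, for $A \in \textrm{HOD}$ with $A \subseteq \alpha < \theta_0$, $A$ is OD, and $A$ is recovered inside $\M_\infty|\delta_0^{\M_\infty}$ from the direct-limit image of $\tau^\M_A$ for a suitable $\M$ capturing $A$.

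The main obstacle is the wellfoundedness argument: one must verify that the embeddings $\sigma_{\M,\vec{A}} : \M \to \Q_i$ into the Prikry-iterates are coherent across $\mathcal{F}$, in the sense that they agree with the $\pi_{(\M,\vec{A}),(\N,\vec{B})}$ on the hulls $H^\M_{\vec{A}}$. This coherence is essentially a restatement of strong $\vec{A}$-iterability, but it requires choosing the comparison iterations and the branches consistently across all pairs and all tuples $\vec{A}$, and using that the $\Q_i$ are themselves built from OD information (so that $\vec{A}$-iterability of $\Q_i$ is witnessed by canonical branches via a self-justifying system sealing the relevant $\Sigma_1$-gap, as in the proof of \rthm{iterability 0}). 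Once this coherence is established, wellfoundedness and the remaining clauses follow cleanly.
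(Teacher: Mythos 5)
The paper does not actually prove Lemma~\ref{wellfoundedness}: immediately before its statement, the text reads ``We summarize facts about $\M_\infty$ proved in \cite{CMI} and \cite{steel2012hod}. These results are due to Woodin.'' So there is no proof in the paper to compare against; the lemma is imported wholesale from the cited sources. Your sketch follows the standard Woodin/Steel argument in broad outline, and invoking the $\Q^x_a$ machinery (and its iterated version from the Prikry section) is indeed the right circle of ideas.

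That said, there are two soft spots worth flagging. First, for wellfoundedness you route through a $\mathbb{P}$-generic $G$, so $\Q_\infty$ lives in $V[G]$ and not in $V$; wellfoundedness is absolute so this can be salvaged, but the more standard (and cleaner) argument avoids genericity altogether: for each $n$ and each finite $\vec{A}$, fix an increasing $n$-chain of Turing degrees $\vec{d}$ and embed the relevant $n$-suitable mice into $\Q^{\vec{d}}_n(\emptyset)$ via term-capturing iterations, then observe that $\M_\infty$ is the increasing union of the resulting wellfounded pieces. The Prikry generic is needed later (to realize $K(\mathbb{R})$ as a derived model of $\Q_\infty^+$), not for wellfoundedness per se. Second, in part (3) your argument for $\delta_0^{\M_\infty}\leq\theta_0$ --- that $\delta_0^{\M_\infty}$ is ``a regular cardinal of HOD bounded by the Wadge rank of the system'' --- does not really parse (an ordinal is not bounded by a Wadge rank, and the system here is not confined to a proper pointclass since $\Theta=\theta_0$). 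The correct argument is that every $\alpha<\delta_0^{\M_\infty}$ is $\pi_{(\M,\vec A),\infty}(\beta)$ for some countable $(\M,\vec A)\in\mathcal F$ and $\beta<\gamma^\M_{\vec A}$, and since $\mathcal F$ together with the direct limit maps is $OD$ and its members are coded by reals, this yields an $OD$ surjection of $\mathbb{R}$ onto $\delta_0^{\M_\infty}$, giving $\delta_0^{\M_\infty}\leq\theta_0$. Also, in the converse direction you apply $\tau^\M_A$ to a subset $A\subseteq\alpha$ of ordinals; you should first pass to an $OD$ set of reals coding $A$ before invoking term capturing. None of these are fatal, but they would need to be fixed to make the sketch rigorous.
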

We'll extend this computation to the full \textrm{HOD}. Now we define a strategy $\Sigma_\infty$ for $\mathcal{M}_\infty$. For each $A \in OD \cap \powerset(\mathbb{R})$, let $\tau^{\mathcal{M}_\infty}_{A,k}$ = common value of $\pi_{(\P,A),\infty}(\tau^\P_{A,k})$ where $\pi_{(\P,A),\infty}$ is the direct limit map and $\tau^\P_{A,k}$ is the standard term of $\P$ that captures $A$ at $\delta^P_k$. $\Sigma_\infty$ will be defined (in $V$) for (finite stacks of) trees on $\mathcal{M}_\infty|\delta^{\mathcal{M}_\infty}_0$ in $\mathcal{M}_\infty$. For $k \geq n$, $\mathcal{M}_\infty \vDash ``Col(\omega,\delta^{\mathcal{M}_\infty}_n) \times Col(\omega,\delta^{\mathcal{M}_\infty}_k) \Vdash (\tau^{\mathcal{M}_\infty}_{A,n})_g =  (\tau^{\mathcal{M}_\infty}_{A,k})_h \cap \mathcal{M}_\infty[g]$" where $g$ is $Col(\omega,\delta^{\mathcal{M}_\infty}_n)$ generic and $h$ is $Col(\omega,\delta^{\mathcal{M}_\infty}_k)$ generic. This is just saying that the terms cohere with one another. 
\\
\indent Let $G$ be $Col(\omega, <\lambda^{\mathcal{M}_\infty})$ generic over $\mathcal{M}_\infty$ where $\lambda^{\mathcal{M}_\infty}$ is the sup of Woodin cardinals in $\mathcal{M}_\infty$. Then $\mathbb{R}^*_G$ is the symmetric reals and $A^*_G$ := $\cup_k(\tau^{\mathcal{M}_\infty}_{A,k})_{G|\delta^{\mathcal{M}_\infty}_k}$. 
\begin{proposition}
\label{ASatAD+}
For all $A\subseteq \mathbb{R}$, $A$ is $OD$, $L(A^*_G, \mathbb{R}^*_G) \vDash \textsf{AD}^+$.
\end{proposition}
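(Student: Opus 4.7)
The plan is to realize $L(A^*_G, \mathbb{R}^*_G)$ as an inner model of a derived model built over a coarse background mouse, and then invoke Theorem~\ref{DMT}. As in the proof of Theorem~\ref{iterability 0}, fix a self-justifying system $\langle A_i : i<\omega\rangle$ of $OD$ sets containing $A = A_0$, a good pointclass $\Omega$ beyond this sjs, and, via Theorem~\ref{n*x}, a coarse $\Omega$-Woodin, fully iterable mouse $N^*$ which Suslin-captures each $A_i$ via trees $(T_i,U_i) \in N^*$, together with a club of $\Gamma$-Woodin cardinals in $N^*$.

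First, inside $N^*$ I would perform the fully backgrounded $L[\vec{E}]$-construction to produce, for each $k<\omega$, a $k$-suitable premouse $\P_k \in N^*$ whose canonical terms $\tau^{\P_k}_{A_i,\cdot}$ are read off from the Suslin-capturing trees. As in the proof of Theorem~\ref{iterability 0}, $\P_k$ is strongly $\vec{A}$-iterable for any finite $\vec{A}$ from the sjs, via a strategy guided by the sjs and induced by $N^*$'s iteration strategy. Because such strategies are unique on normal trees, the direct limit of the $\P_k$'s (as $k \to \omega$, under their canonical iteration maps) is canonically identified with $\M_\infty$ computed in $V$ along a cofinal subsystem of $\mathcal{F}$; in particular the terms $\tau^{\M_\infty}_{A,k}$ agree with those read off from $T_0,U_0$, and after a suitable iteration of $N^*$ to some $N^{**}$, $\lambda := \lambda^{\M_\infty}$ is a limit of Woodin cardinals of $N^{**}$.

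Second, apply Theorem~\ref{DMT} to $N^{**}$ at $\lambda$: the new derived model $D^+(N^{**},\lambda) = L(\Gamma,\mathbb{R}^*)$ satisfies $\textsf{AD}^+$. Using the Woodin cardinals $\delta^{\M_\infty}_k$ of $N^{**}$ to perform genericity iterations, the generic $G$ lifts to a $Col(\omega,<\lambda)$-generic $G^*$ over $N^{**}$ whose symmetric reals coincide with $\mathbb{R}^*_G$. By Suslin-capturing of $A$ and the coherence of the $\tau^{\M_\infty}_{A,k}$'s noted before the statement of the proposition, $A^*_G = p[i(T_0)] \cap \mathbb{R}^*_G$ for the iteration map $i: N^* \to N^{**}$. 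Hence $A^*_G \in Hom^*$ and $L(A^*_G,\mathbb{R}^*_G) \subseteq D^+(N^{**},\lambda)$. Since $\textsf{AD}^+$ is preserved downward to inner models of the form $L(X,\mathbb{R})$ containing all the reals, we conclude $L(A^*_G,\mathbb{R}^*_G) \vDash \textsf{AD}^+$.

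The main obstacle is identifying the direct limit computed inside $N^*$ with the direct limit $\M_\infty$ of $\mathcal{F}$ as formed in $V$. This rests on two ingredients: uniqueness of the sjs-guided iteration strategies, which forces the iteration maps in $V$ to agree with those coming from $N^*$'s strategy on the relevant suitable $\P_k$'s; and term condensation for self-justifying systems, which guarantees that each $\tau^{\M_\infty}_{A,k}$ is correctly realized as the term coming from the Suslin-capturing trees. Once this identification is made, the remainder of the argument is a fairly standard application of the Derived Model Theorem together with downward absoluteness of $\textsf{AD}^+$.
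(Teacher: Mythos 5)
There is a genuine gap here, and it is the central one that the paper's $\Sigma_1$-reflection argument is designed to avoid. You propose to fix a self-justifying system beyond $A$, a good pointclass $\Omega$ beyond that, and a coarse mouse $N^*$ Suslin-capturing the sjs, and then realize $\M_\infty$ (with its Woodins) inside an iterate $N^{**}$ of $N^*$. Two things go wrong. First, $A$ ranges over \emph{all} $OD$ sets of reals, which under $\Theta=\theta_0$ are Wadge-cofinal; for $A$ with $w(A)\geq\utilde{\delta}^2_1$ there is no good pointclass $\Omega$ with $A\in\utilde{\Delta}_\Omega$ and no $N^*$ capturing such an sjs, so your first step is simply unavailable for a cofinal family of the $A$'s you must treat. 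Second, and more fundamentally, the object $\M_\infty$ in the Proposition is the direct limit of the full system $\mathcal{F}$ formed in $V$: it satisfies $\delta_0^{\M_\infty}=\theta_0=\Theta$ and $\lambda^{\M_\infty}>\Theta$. By contrast $N^*$, $N^{**}$, and the $\P_k$'s built in $N^*$ are countable (or at any rate far smaller than $\Theta$). So the identification ``the direct limit of the $\P_k$'s is $\M_\infty$'' and the claim ``$\lambda^{\M_\infty}$ is a limit of Woodins of $N^{**}$'' cannot hold even in spirit; the $Col(\omega,<\lambda^{\M_\infty})$-generic $G$ over $\M_\infty$ has no counterpart over $N^{**}$, and $D^+(N^{**},\lambda)$ cannot contain $\mathbb{R}^*_G$ or $A^*_G$ as you assert.

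The paper's proof is by contradiction precisely because of these size obstructions. Assuming the proposition fails, $\Sigma_1$-reflection (Theorem~\ref{fundamental result of ad+}) produces a \emph{level} $N$ of $K(\mathbb{R})$ below $\utilde{\delta}^2_1$ in which the failure occurs. Inside $N$, the analogues $\theta_0^N$, $\M_\infty^N$, $\lambda^{\M_\infty^N}$ are all small (below $\utilde{\delta}^2_1$), and one can genuinely find, in $V$, a good pointclass beyond $\powerset(\mathbb{R})^N$ and a coarse mouse $\N^*_z$ capturing it. The bulk of the paper's proof (Claims 1 and 2, the construction of $\N$ from the $\N_i$'s, the fullness-preservation argument, and the use of the Prikry forcing and $S$-constructions) is then aimed at producing a fine mouse $\N$ with $\omega$ Woodins whose derived model is $N$ and of which $\M_\infty^N$ is a $\Lambda$-tail; the contradiction is then extracted in $N$. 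None of this can be short-circuited by working directly with the un-reflected $\M_\infty$. To repair your proposal you would need to begin with the reflection step, replace $\M_\infty$ by $\M_\infty^N$ throughout, and carry out the internal construction of the $\omega$-Woodin mouse whose derived model is $N$; at that point you would essentially be reproducing the paper's argument.
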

\begin{proof}
Suppose not. Using $\Sigma_1$-reflection, there is an $N$, which is a level of $K(\mathbb{R})$ below $\undertilde{\delta}^2_1$ satisfying the statement (T) $\equiv$ ``$\textsf{AD}^+ + \textsf{ZF}^-+\textsf{DC}+ \textsf{MC} + \exists A (A \textrm{ is OD and } L(A^*_G,\mathbb{R}^*_G) \nvDash \textsf{AD}^+))$". We may assume $N$ is the first such level. Let
\begin{center}
$U=\{ (x, \M) : \M$ is a sound $x$-mouse, $\rho_\omega(\M)=\{x\}$, and has an iteration strategy in $N\}.$
\end{center}
 Since $\textsf{MC}$ holds in $N$, $U$ is a universal $(\Sigma^2_1)^N$-set. Let $A\in N$ be an OD set of reals witnessing $\phi$. We assume that $A$ has the minimal Wadge rank among the sets witnessing $\phi$. Using the results of \cite{wilson2012contributions}, we can get a $\vec{B}=\la B_i: i<\omega\ra$ which is a self-justifying-system (sjs) such that $B_0 = U$ and each $B_i\in N$. Furthermore, we may assume that each $B_i$ is OD in $N$.

Because $\textsf{MC}$ holds and $\Gamma^*=_{def}\powerset(\mathbb{R})^N\varsubsetneq \utilde{\Delta}^2_1$, there is a real $x$ such that there is a sound mouse $\M$ over $x$ such that $\rho(\M)=x$ and $\M$ doesn't have an iteration strategy in $N$. Fix then such an $(x, \M)$ and let $\Sigma$ be the strategy of $\M$. Let $\Gamma$ be a good pointclass such that $Code(\Sigma), \vec{B}, U, U^c\in \utilde{\Delta}_\Gamma$. Let $F$ be as in \rthm{n*x} and let $z$ be such that $(\N^*_z, \d_z,\Sigma_z)$ Suslin captures $Code(\Sigma), \vec{B}, U, U^c$.

We let $\Phi=(\Sigma^2_1)^N$. We have that $\Phi$ is a good pointclass. Because $\vec{B}$ is Suslin captured by $\N^*_z$, we have $(\delta_z^+)^{\N^*_z}$-complementing trees $T, S\in \N^*_z$ which capture $\vec{B}$. Let $\k$ be the least cardinal of $\N^*_z$ which, in $\N^*_z$ is $<\d_z$-strong.\\

\noindent \textbf{Claim 1.} \textit{$\N^*_z\models ``\k$ is a limit of points $\eta$ such that $Lp^{\Gamma^*}(\N^*_z|\eta)\models ``\eta$ is Woodin".}
\begin{proof}
The proof is an easy reflection argument. Let $\lambda=\d_z^+$ and let $\pi: M\rightarrow \N^*_z|\l$ be an elementary substructure such that
\begin{enumerate}
\item $T,S\in ran(\pi)$,
\item if $\cp(\pi)=\eta$ then $V_\eta^{\N^*_z}\subseteq M$, $\pi(\eta)=\d_z$ and $\eta>\kappa$.
\end{enumerate}
By elementarity, we have that $M\models ``\eta$ is Woodin". Letting $\pi^{-1}(\la T, S\ra)=\la \bar{T}, \bar{S}\ra$, we have that $(\bar{T},\bar{S})$ Suslin captures the universal $\Phi$ set over $M$ at $(\eta^+)^M$. This implies that $M$ is $\Phi$-full and in particular, $Lp^{\Gamma^*}(\N^*_z|\eta)\in M$. Therefore, $Lp^{\Gamma^*}(\N^*_z|\eta)\models ``\eta$ is Woodin". The claim then follows by a standard argument.
\end{proof}

Let now $\la\eta_i : i<\omega\ra$ be the first $\omega$ points $<\kappa$ such that for every $i<\omega$, $Lp^{\Gamma^*}(\N^*_z|\eta_i)\models ``\eta_i$ is Woodin". Let now $\la \N_i: i<\omega\ra$ be a sequence constructed according to the following rules:
\begin{enumerate}
\item $\N_0=L[\vec{E}]^{\N^*_z|\eta_0}$,
\item $\N_{i+1}=(L[\vec{E}][\N_i])^{\N^*_z|\eta_{i+1}}$.
\end{enumerate}
Let $\N_\omega=\cup_{i<\omega}\N_i$. \\

\noindent \textbf{Claim 2.} \textit{For every $i<\omega$, $\N_\omega\models ``\eta_{i}$ is Woodin" and $\N_{\omega}|(\eta_i^+)^{\N_\omega}=Lp^{\Gamma^*}(\N_i)$.}
\begin{proof}
It is enough to show that
\begin{enumerate}
\item $\N_{i+1}\models ``\eta_i$ is Woodin",
\item $\N_i=V_{\eta_i}^{\N_{i+1}}$,
\item $\N_{i+1}|(\eta_i^+)^{\N_{i+1}}=Lp^{\Gamma^*}(\N_i)$.
\end{enumerate}
To show 1-3, it is enough to show that if
$\W\trianglelefteq \N_{i+1}$ is such that  $\rho_\omega(W)\leq\eta_i$ then the fragment of $\W$'s iteration strategy which acts on trees above $\eta_i$ is in $\Gamma^*$. Fix then $i$ and $\W\trianglelefteq \N_{i+1}$ is such that  $\rho_\omega(W)\leq\eta_i$. Let $\xi$ be such that the if $\S$ is the $\xi$-th model of the full background construction producing $\N_{i+1}$ then $\mathbb{C}(\S)=\W$. Let $\pi: \W\rightarrow \S$ be the core map. It is a fine-structural map but that it irrelevant and we surpass this point. The iteration strategy of $\W$ is the $\pi$-pullback of the iteration strategy of $\S$. Let then $\nu<\eta_{i+1}$ be such that $\S$ is the $\xi$-th model of the full background construction of $\N^*_x|\nu$. To determine the complexity of the induced strategy of $\S$ it is enough to determine the strategy of $\N^*_x|\nu$ which acts on non-dropping stacks that are completely above $\eta_i$. Now, notice that by the choice of $\eta_{i+1}$, for any non-dropping tree $\T$ on $\N^*_x|\nu$ which is above $\eta_i$ and is of limit length, if $b=\Sigma(\T)$ then $\Q(b, \T)$ exists and $\Q(b, \T)$ has no overlaps, and $\Q(b, \T)\trianglelefteq Lp^{\Gamma^*}(\M(\T))$. This observation indeed shows that the fragment of the iteration strategy of $\N^*_x|\nu$ that acts on non-dropping stack that are above $\eta_i$ is in $\Gamma^*$. Hence, the strategy of $\W$ is in $\Gamma^*$.
\end{proof}

We now claim that there is $\W\trianglelefteq Lp(\N_\omega)$ such that $\rho(W)<\eta_\omega$. To see this suppose not. It follows from $\textsf{MC}$ that $Lp(\N_\omega)$ is $\Sigma^2_1$-full. We then have that $x$ is generic over $Lp(\N_\omega)$ at the extender algebra of $\N_\omega$ at $\eta_0$. Because $Lp(\N_\omega)[x]$ is $\Sigma^2_1$-full, we have that $\M\in Lp(\N_\omega)[x]$ and $Lp(\N_\omega)\models ``\M$ is $\eta_\omega$-iterable" by fullness of $Lp(\N_\omega)$. Let $\S=(L[\vec{E}][x])^{\N_\omega[x]|\eta_2}$ where the extenders used have critical point $>\eta_0$. Then working in $\N_{\omega}[x]$ we can compare $\M$ with $\S$. Using standard arguments, we get that $\S$ side doesn't move and by universality, $\M$ side has to come short (see \cite{ATHM}). This in fact means that $\M\trianglelefteq \S$. But the same argument used in the proof of Claim 2 shows that every $\K\trianglelefteq \S$ has an iteration strategy in $\Gamma^*$, contradiction!

Let now $\W\trianglelefteq Lp(\N_\omega)$ be least such that $\rho_\omega(\W)<\eta_\omega$. Let $k, l$ be such that $\rho_{l}(\W)<\eta_k$. We can now consider $\W$ as a $\W|\eta_k$-mouse and considering it such a mouse we let $\N=\mathbb{C}_{l}(\W)$. Thus, $\N$ is sound above $\eta_k$. We let $\la \gamma_i : i<\omega\ra$ be the Woodin cardinals of $\N$ and $\gamma=\sup_{i<\omega}\gg_i$.

Let $\Lambda$ be the strategy of $\N$. We claim that $\Lambda$ is $\Gamma^*$-fullness preserving above $\gamma_k$. To see this fix $\N^*$ which is a $\Lambda$-iterate of $\N$ such that the iteration embedding $i: \N\rightarrow \N^*$ exists. If $\N^*$ isn't $\Gamma^*$-full then there is a strong cutpoint $\nu$ of $\N^*$ and a $\N^*|\nu$-mouse $\W$ with iteration strategy in $\Gamma^*$ such that $\rho_\omega(\W)=\nu$ and $\W\ntrianglelefteq \N^*$. If $\N^*$ is not sound above $\nu$ then $\N^*$ wins the coiteration with $\W$; but this then implies $\W \ntriangleleft \N^*$, which contradicts our assumption. Otherwise, $\N^* \triangleleft \W$, which is also a contradiction. Hence $\Lambda$ is $\Gamma^*$-fullness preserving.
\\
\indent Now it's not hard to see that $\N$ has the form $\mathcal{J}_{\xi+1}^{\vec{E}}(\N|\gamma)$ and $\mathcal{J}_\xi^{\vec{E}}(\N|\gamma)$ satisfies ``my derived model at $\gamma$ satisfies (T)." This is basically the content of Lemma 7.5 of \cite{steel2012hod}. The argument is roughly that we can iterate $\N$ to an $\R$ such that $\R = \mathcal{J}(\Q_\infty^+)$, where $\Q_\infty^+$ is discussed in the previous subsection and the Prikry forcing is done inside $N$.

Now let $\N^*$ be the transitive collapse of the pointwise definable hull of $\N|\xi$. We can then realize $N$ as a derived model of a $\Lambda$-iterate $\R$ of $\N^*$ such that $\R$ extends a Prikry generic over $N$ (the Prikry forcing is discussed in the previous subsection and $\R$ is in fact the $\Q_\infty^+$, where $\Q_\infty^+$ is as in the previous subsection). We can then use Lemmas 7.6, 7.7, and 6.51 of \cite{steel2012hod} to show that $\M_\infty^N$ is a $\Lambda$-iterate of $\N^*$. 

\indent In $N$, let $A \subseteq \mathbb{R}$ be the least OD set such that $L(A^*_G, \mathbb{R}^*_G) \nvDash \textsf{AD}$. Then there is an iterate $\mathcal{M}$ of $\mathcal{N}^*$ having preimages of all the terms $\tau^{\M_\infty}_{A,k}$. We may assume $\mathcal{M}$ has new derived model $N$ (this is possible by the above discussion) and suitable initial segments of $\M$ are points in the HOD direct limit system of $N$. Since $N\vDash \textsf{AD}^+$, $\mathcal{M}$ thinks that its derived model satisfies that $L(A,\mathbb{R}) \vDash \textsf{AD}^+$. Now iterate $\mathcal{M}$ to $\mathcal{P}$ such that $\M_\infty$ is an initial segment of $\mathcal{P}$. By elementarity $L(A^*_G,\mathbb{R}^*_G) \vDash \textsf{AD}^+$. This is a contradiction.
\end{proof}
\begin{definition}[$\Sigma_\infty$]
\label{Sigma infinity}
Given a normal tree $\mathcal{T} \in \mathcal{M}_\infty$ and $\mathcal{T}$ is based on $\mathcal{M}_\infty|\theta_0$. $\mathcal{T}$ is by $\Sigma_\infty$ if the following hold (the definition is similar for finite stacks):
\begin{itemize}
\item If $\mathcal{T}$ is short then $\Sigma$ picks the branch guided by $\Q$-structure (as computed in $\M_\infty)$.
\item If $\mathcal{T}$ is maximal then $\Sigma_\infty(\mathcal{T})$ = the unique cofinal branch $b$ which moves $\tau^{\mathcal{M}_\infty}_{A,0}$ correctly for all $A \in OD \cap \powerset(\mathbb{R})$ i.e. for each such $A$, $i_b(\tau^{\M_\infty}_{A,0}) = \tau^{\M^{\mathcal{T}}_b}_{A^*,0}$.
\end{itemize}
\end{definition}
\begin{lemma}\label{SigmaInftyTExists} Given any such $\mathcal{T}$ as above, $\Sigma_\infty(\mathcal{T})$ exists.
\end{lemma}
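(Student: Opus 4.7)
The plan is to split into the two cases of Definition \ref{Sigma infinity} and, in each case, produce the required branch by descending from $\M_\infty$ to some $(\P,\vec A)\in\mathcal F$ where iterability is already at hand, then transporting the branch back up through the direct-limit embedding. Since $\mathcal{T}\in\M_\infty$ and $\M_\infty$ is the direct limit of $\mathcal F$, there are $(\P,\vec A)\in\mathcal F$ (with $\vec A$ as long as necessary) and a normal tree $\bar{\mathcal T}$ on $\P|\delta_0^\P$ whose image under the natural $\vec A$-iteration embedding from $\P$ to its representative in $\M_\infty$ is $\mathcal{T}$; shortness and maximality of $\bar{\mathcal T}$ in $\P$ match shortness and maximality of $\mathcal{T}$ in $\M_\infty$ because both are controlled by $Lp^\Gamma$ of the common part, which is preserved by the direct-limit map.

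In the short case, the short-tree iteration strategy of $\P$ (part of its $\vec A$-iteration strategy given by \rthm{iterability 0}) supplies $\bar b$ with $\Q(\bar b,\bar{\mathcal T})\unlhd Lp^\Gamma(\M(\bar{\mathcal T}))$; pushing $\Q(\bar b,\bar{\mathcal T})$ forward through the direct-limit embedding lands in $Lp^\Gamma(\M(\mathcal{T}))$ as computed in $\M_\infty$, which identifies the unique cofinal branch $b$ of $\mathcal{T}$ guided by the resulting $\Q$-structure. In the maximal case, strong $\vec A$-iterability of $\P$ yields a cofinal $\bar b$ whose iteration embedding $i^{\bar{\mathcal T}}_{\bar b}$ moves each $\tau^\P_{A,\mu}$ (for $A\in\vec A$) to $\tau^{\M^{\bar{\mathcal T}}_{\bar b}}_{A,i^{\bar{\mathcal T}}_{\bar b}(\mu)}$, and $\M^{\bar{\mathcal T}}_{\bar b}$ is again suitable and strongly $\vec A$-iterable via the tail strategy, so $(\M^{\bar{\mathcal T}}_{\bar b},\vec A)\in\mathcal F$. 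The factoring $\pi_{(\P,\vec A),\infty}=\pi_{(\M^{\bar{\mathcal T}}_{\bar b},\vec A),\infty}\circ i^{\bar{\mathcal T}}_{\bar b}$ at the level of hulls then turns $\bar b$ into a cofinal branch $b_{\vec A}$ of $\mathcal{T}$ in $V$ whose iteration map moves $\tau^{\M_\infty}_{A,0}$ correctly for every $A\in\vec A$.

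To pass from ``some $\vec A$'' to ``all OD $A$'', I would use the directedness of $\mathcal F$: given $\vec A\subseteq\vec A'$, realise both constructions inside a common $(\Q,\vec A')\in\mathcal F$ above $(\P_{\vec A},\vec A)$ and $(\P_{\vec A'},\vec A')$, and invoke the uniqueness clause of strong $\vec A'$-iterability (\rdef{strongAiterability}) to conclude $b_{\vec A}=b_{\vec A'}$. The $b_{\vec A}$'s then assemble into a single cofinal branch $b$ of $\mathcal{T}$ moving every $\tau^{\M_\infty}_{A,0}$ correctly; uniqueness of $b$, as stipulated in Definition \ref{Sigma infinity}, follows from the same hull-agreement argument applied to any two candidate branches. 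The most delicate step will be precisely this coherence of the $b_{\vec A}$'s: it requires that the pushforward-of-branch operation commute with enlarging $\vec A$, and this is where \rprop{ASatAD+} enters, by ensuring that the term interpretations in $\M_\infty$ organise into a derived model of $\textsf{AD}^+$ and therefore cannot simultaneously be served by two different cofinal branches.
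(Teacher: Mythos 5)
Your approach is genuinely different from the paper's. The paper argues by contradiction: using $\Sigma_1$-reflection (Theorem \ref{fundamental result of ad+}) it locates a minimal $N=K(\mathbb{R})|\gamma$ witnessing the failure, then (via the construction in the proof of Proposition \ref{ASatAD+}) produces a single mouse $\N$ with $\omega$ Woodin cardinals whose strategy $\Gamma$ is guided by a self-justifying system sealing the gap at $\gamma$. Since $\M_\infty^N$ is a $\Gamma$-tail of $\N$, all the branches $b_{A^*}$ for the various $OD^N$ sets $A$ are tails of the one strategy $\Gamma$, so they cohere for free; this is what contradicts the reflected failure. Your construction instead tries to build the branch directly from the system $\mathcal{F}$, and the place it breaks is exactly where you flag it as ``most delicate''.

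There are two concrete gaps. First, the pushforward of $\bar{b}$ to a cofinal branch of $\mathcal{T}$ is not automatic: $\bar{\mathcal{T}}$ lives in a countable premouse $\P$, whereas $\mathcal{T}\in\M_\infty$ can have length close to $o(\M_\infty)>\Theta$, and the direct-limit map $\pi_{(\P,\vec{A}),\infty}$ need not be continuous at $lh(\bar{\mathcal{T}})$; so $\pi_{(\P,\vec{A}),\infty}''\bar{b}$ can be bounded in $lh(\mathcal{T})$ and hence is not a branch at all. Second, the coherence $b_{\vec{A}}=b_{\vec{A}'}$ for $\vec{A}\subseteq\vec{A}'$ does not follow from strong $\vec{A}'$-iterability: Definition \ref{strongAiterability} constrains iteration maps only on the hull $H^{\M}_{\vec{A}'}$, generated below $\gamma^\M_{\vec{A}'}<\delta_0^\M$, and two distinct cofinal branches of $\mathcal{T}$ can move that hull identically while disagreeing as branches. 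Proposition \ref{ASatAD+} by itself yields no branch-uniqueness; in the paper it is used only to supply the background mouse $\N$ whose one strategy guides all the $b_{A^*}$ simultaneously, a step your argument has no substitute for and that directedness of $\mathcal{F}$ alone does not provide.
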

\begin{proof} Suppose not. By reflection (Theorem \ref{fundamental result of ad+}), there is a (least) $\gamma < \undertilde{\delta}^2_1$ such that $N=_{def} K(\mathbb{R})|(\gamma) \vDash \phi$ where $\phi$ is the statement ``$\textsf{ZF}^-+\textsf{DC}+ \textsf{MC} +\exists \mathcal{T} (\Sigma_\infty(\mathcal{T}) \textrm{ doesn't exist})$". We have a self-justifying-system $\vec{B}$ for $\Gamma^* = \powerset(\mathbb{R})^N$. By the construction of Proposition\ref{ASatAD+}, there exists a mouse $\N$ with $\omega$ Woodin cardinals which has strategy $\Gamma$ guided by $\vec{B}$. 

By reflecting to a countable hull, it's easy to see that $\M_\infty^{N}$ is a $\Gamma$-tail of $\N$ (the reflection is just to make all relevant objects countable). Note that by Theorem \ref{iterability 0}, for every $A$, which is OD in $N$, there is a $\Gamma$-iterate of $\N$ that is strongly $A$-iterable. Let $\Sigma_\infty^N$ be the strategy of $\M_\infty^N$ given by $\Gamma$. It follows then that for any tree $\mathcal{T}$, $\Sigma_\infty^N(\mathcal{T})$ is the limit of all branches $b_{A^*}$, where $A$ is OD in $N$ and $b_{A^*}$ moves the term relation for $A^*$ correctly. This fact can be seen in $N$. This gives a contradiction.
\end{proof}
\indent It is evident that $L(\mathcal{M}_\infty, \Sigma_\infty) \subseteq \textrm{HOD}$. Next, we show $\M_\infty$ and $\Sigma_\infty$ capture all of $\textrm{HOD}$. In $L(\mathcal{M}_\infty, \Sigma_\infty)$, first construct (using $\Sigma_\infty$) a mouse $\mathcal{M}_\infty^+$ extending $\mathcal{M}_\infty$ such that o($\mathcal{M}_\infty$) is the largest cardinal of $\mathcal{M}_\infty^+$ as follows: 
\begin{enumerate}
\item Let $\mathbb{R}^*_G$ be the symmetric reals obtained from a generic $G\subseteq Col(\omega, <\lambda^{\mathcal{M}_\infty})$ over $L(\mathcal{M}_\infty)$.
\item For each $A^*_G$ (defined as above where $A \in \powerset(\mathbb{R}) \cap \textrm{OD}^{K(\mathbb{R})}$) (we know $L(\mathbb{R}^*_G,A^*_G) \vDash \textsf{AD}^+$), S-translate the $\mathbb{R}^*_G$-mice in this model to mice $\mathcal{S}$ extending $\mathcal{M}_\infty$ with the derived model of $\S$ at $\lambda^{\mathcal{M}_\infty}$ $D^+(\mathcal{S}, \lambda^{\mathcal{M}_\infty}) = L(\mathbb{R}^*_G, A^*_G)$. This is again proved by a reflection argument similar to that in Proposition \ref{ASatAD+}.
\item Let $\mathcal{M}_\infty^+ = \cup_{\mathcal{S}} \mathcal{S}$ for all such $\mathcal{S}$ as above. It's easy to see that $\mathcal{M}_\infty^+$ is independent of $G$. By a reflection argument like that in Proposition \ref{ASatAD+}, we get that mice over $\mathcal{M}_\infty$ are all compatible, no levels of $\mathcal{M}_\infty^+$ projects across $o(\mathcal{M}_\infty)$.  
\end{enumerate}
\begin{remark} \rm{$\delta_0^{\M_\infty}$ is not collapsed by $\Sigma_\infty$ because it is a cardinal in \textrm{HOD}. $\Sigma_\infty$ is used to obtain the $A^*_G$ above by moving correctly the $\tau^{\M_\infty}_{A,0}$ in genericity iterations. $L(\M_\infty)$ generally does not see the sequence $\langle \tau^{\M_\infty}_{A,k} \ | \ k \in \omega \rangle$ hence can't construct $A^*_G$; that's why we need $\Sigma_\infty$. Since $\Sigma_\infty$ collapses $\delta^{\M_\infty}_1, \delta^{\M_\infty}_2...$ by genericity iterating $\M_\infty|\delta_0^{\M_\infty}$ to make $\M_\infty|\delta_i^{\M_\infty}$ generic for $i>0$, it doesn't make sense to talk about $D(L(\M_\infty,\Sigma_\infty))$}.
\end{remark}
\begin{lemma} 
\label{keylemma}
\textrm{HOD} $\subseteq L(\M_\infty, \Sigma_\infty)$
\end{lemma}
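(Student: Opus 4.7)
My plan is to apply \rthm{WoodinVopenka} to reduce to showing that every $OD$ subset $R$ of $\Theta$ belongs to $L(\M_\infty, \Sigma_\infty)$. Indeed, \rthm{WoodinVopenka} furnishes some $R \subseteq \Theta$ with $R \in \textrm{HOD}$ and $\textrm{HOD} = L[R]$, so it is enough to locate this particular $R$ inside $L(\M_\infty, \Sigma_\infty)$.

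To code $R$ by $OD$ sets of reals, I use that $\Theta = \theta_0$: fix an $OD$ surjection $\rho : \mathbb{R} \to \Theta$, let $E$ be the $OD$ equivalence relation $\rho(x) = \rho(y)$ on $\mathbb{R}$, let $<_E$ be the induced $OD$ wellorder of $\mathbb{R}/E$ (of ordertype $\Theta$), and let $A = \rho^{-1}[R]$, an $E$-invariant $OD$ subset of $\mathbb{R}$. Coding $E$ and $<_E$ as $OD$ subsets of $\mathbb{R}$ in the standard way, \rthm{iterability 0} applied to the finite sequence $\vec{A}=(A,E,<_E)$ yields, via the direct-limit maps $\pi_{(\N,\vec{A}),\infty}$, canonical terms $\tau_A,\tau_E,\tau_{<_E}$ at $\delta_0^{\M_\infty}$ inside $\M_\infty$; all three terms lie in $L(\M_\infty,\Sigma_\infty)$.

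Now I work inside $L(\M_\infty,\Sigma_\infty)$, constructing a $Col(\omega,<\lambda^{\M_\infty})$-name $\dot R$ over $\M_\infty^+$ by the following recipe at a generic $G$. Form the symmetric-reals interpretations $A^*_G, E^*_G, (<_E)^*_G$, which exist by \rprop{ASatAD+} together with the S-translation construction producing $\M_\infty^+$; in the derived model $L(\mathbb{R}^*_G, A^*_G, E^*_G, (<_E)^*_G)\vDash\textsf{AD}^+$, $(<_E)^*_G$ wellorders $\mathbb{R}^*_G/E^*_G$ in ordertype the derived model's $\theta_0$, which I will argue coincides with $\Theta^V = \delta_0^{\M_\infty}$ by tracking the bijection between $OD$ sets of $V$ and canonical interpretations in the derived model; declare $\alpha\in\dot R^G$ iff the $\alpha$-th $E^*_G$-class under $(<_E)^*_G$ is contained in $A^*_G$. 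This recipe evaluates to $R$ in every generic extension. Since the recipe is independent of $G$, the homogeneity of $Col(\omega,<\lambda^{\M_\infty})$ yields $R = \{\alpha : \mathbf{1}\Vdash\check{\alpha}\in\dot R\} \in L(\M_\infty,\Sigma_\infty)$.

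The main obstacle I expect is the identification of the derived model's $\theta_0$ with $\delta_0^{\M_\infty}$, and, relatedly, the verification that $A^*_G/E^*_G$ read off via the ordertype map of $(<_E)^*_G$ really returns the original $R$ rather than a shifted version. This rests on understanding how the direct-limit embeddings $\pi_{(\N,\vec{B}),\infty}$ act on the Wadge hierarchy of $OD$ sets and on matching the Solovay sequence of the derived model of $\M_\infty^+$ with that of $V = K(\mathbb{R})$ up to $\Theta$; the calculation is of a piece with the proof of \rprop{ASatAD+} and the S-translation arguments preceding the lemma, which provide the framework needed to control the correspondence between $(A,E,<_E)$ in $V$ and their starred interpretations.
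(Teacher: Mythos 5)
Your overall strategy — reduce via \rthm{WoodinVopenka} to producing one $R\subseteq\Theta$ inside $L(\M_\infty,\Sigma_\infty)$, and try to read $R$ off the derived model of $\M_\infty^+$ — is the right shape of argument, and it is close to what the paper does. But there are two genuine gaps.

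First, your recipe requires $L(\M_\infty,\Sigma_\infty)$ to \emph{identify} the particular elements $\tau_A,\tau_E,\tau_{<_E}$ of $\M_\infty$ as the canonical terms for the chosen coding $(A,E,<_E)$. That these terms \emph{lie in} $L(\M_\infty,\Sigma_\infty)$ is trivial (they are elements of $\M_\infty$), but the map $A\mapsto\tau^{\M_\infty}_{A,k}$ is computed in $V$ via the direct limit system $\mathcal{F}$, and $L(\M_\infty,\Sigma_\infty)$ has no access to the $OD$ sets of $V$ nor to the labeling of terms by such sets. (The paper's own remark makes this point: $L(\M_\infty)$ cannot see $\langle\tau^{\M_\infty}_{A,k}\mid k<\omega\rangle$, and even with $\Sigma_\infty$ one only recovers the \emph{unlabeled} family needed to build $\M_\infty^+$ and its derived model, not the correspondence $A\leftrightarrow\tau_A$.) Your name $\dot R$ is therefore not obviously a name in $L(\M_\infty,\Sigma_\infty)$. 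The paper sidesteps this entirely by working with a fixed formula $\phi$ defining $P$ — a piece of syntax that can be evaluated uniformly in $K(\mathbb{R})$ and in $D^+(\M_\infty^+,\lambda^{\M_\infty})$ — rather than trying to reconstruct a specific $OD$ set of reals and its term.

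Second, the ``main obstacle'' you flag is the actual content of the lemma, and pointing at \rprop{ASatAD+} does not discharge it. \rprop{ASatAD+} is a coarse statement ($L(A^*_G,\mathbb{R}^*_G)\vDash\textsf{AD}^+$); what \rlem{keylemma} needs is a precise ordinal-by-ordinal correspondence. The derived model $D^+(\M_\infty^+,\lambda^{\M_\infty})$ computes its own direct limit $(\M_\infty)^{D^+}$, and the two sides are matched by a nontrivial Σ$_\infty$-iteration map $\pi:\M_\infty|((\delta_0^{\M_\infty})^{++})^{\M_\infty}\to(\M_\infty)^{D^+}$ — so the ``$\alpha$-th $E^*_G$-class'' in the derived model corresponds not to $\alpha$ but to a $\pi$-translated ordinal, and showing the translation is coherent is exactly the claim (\textasteriskcentered\textasteriskcentered) in the paper. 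Proving it requires a fresh $\Sigma_1$-reflection to a minimal $K(\mathbb{R})|\gamma$, a self-justifying system $\vec B$, an $\omega$-suitable $\mathcal N$ with $\vec B$-guided strategy, and the argument that the relevant fragment of that strategy is definable \emph{inside} the derived model via a suitable $A\in\vec B$ with $\gamma^{D^+(\N,\lambda^\N)}_{A,0}>\alpha$. None of that is ``of a piece with'' the proof of \rprop{ASatAD+}; it is the technical heart of the lemma, and it is what your write-up leaves unproved.

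In short: the framework and the use of \rthm{WoodinVopenka} and the derived model of $\M_\infty^+$ match the paper, but the term-identification issue and the $\pi$-correspondence claim are both substantive gaps, and the second is precisely where the paper's reflection argument lives.
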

\begin{proof}  Using \rthm{WoodinVopenka}, we know $\textrm{HOD} = L[P]$ for some $P \subseteq \Theta$. Therefore, it is enough to show P $\in L(\M_\infty, \Sigma_\infty)$. Let $\phi$ be a formula defining $P$, i.e.
\begin{equation*}
\alpha \in P \Leftrightarrow K(\mathbb{R}) \vDash \phi[\alpha].
\end{equation*}
Here we suppress the ordinal parameter. Now in $L(\M_\infty, \Sigma_\infty)$ let $\pi : \M_\infty | ((\delta_0^{\M_\infty})^{++})^{\M_\infty} \rightarrow (\M_\infty)^{D^+(\M_\infty^+,\lambda^{\M_\infty})}$ where $\pi$ is according to $\Sigma_\infty$. We should note that $\Sigma_\infty$-iterates are cofinal in the directed system $\mathcal{F}$ defined in $D(\M_\infty^+,\lambda^{\M_\infty})$ by the method of boolean comparisons (see \cite{steel2012hod} for more on this).
\\
\\
\textbf{Claim:}  $K(\mathbb{R}) \vDash \phi[\alpha] \Leftrightarrow D^+(\M_\infty^+, \lambda^{M_\infty}) \vDash \phi[\pi(\alpha)]$ (\textasteriskcentered\textasteriskcentered)
\begin{proof}  Otherwise, reflect the failure of (\textasteriskcentered\textasteriskcentered) as before to the least $K(\mathbb{R})|\gamma$ and get a self-justifying-system $\vec{B}$ of OD sets along with an $\omega$-suitable mouse $\mathcal{N}$ with $\vec{B}$-guided iteration strategy $\Gamma$. By genericity iteration above its first Woodin, we may assume $D^+(\mathcal{N}, \lambda^{\mathcal{N}}) = K(\mathbb{R})|\gamma$. Fix an $\alpha$ witnessing the failure of (\textasteriskcentered\textasteriskcentered). Let $\sigma : \mathcal{N}|((\delta^{\mathcal{N}}_0)^{++})^{\mathcal{N}} \rightarrow (\M_\infty)^{D^+(\mathcal{N},\lambda^\N)}$ be the direct limit map by $\Gamma$ (by taking a countable hull containing all relevant objects, we can assume $\sigma$ exists). We may assume there is an $\overline{\alpha}$ such that $\sigma(\overline{\alpha}) = \alpha$. Notice here that $\Sigma_\infty^{K(\mathbb{R})|\gamma}$ is a tail of $\Gamma$ as $\Sigma_\infty^{K(\mathbb{R})|\gamma}$ moves all the term relations for $OD^{{K(\mathbb{R})|\gamma}}$ sets of reals correctly and $\Gamma$ is guided by the self-justifying system $\vec{B},$ which is cofinal in $\powerset(\mathbb{R}) \cap OD^{K(\mathbb{R})|\gamma}$. It then remains to see that:
\begin{equation*}
D^+(\M_\infty^+,\lambda^{\M_\infty})\vDash\phi[\pi(\alpha)] \Leftrightarrow D^+(\N,\lambda^\N) \vDash \phi[\sigma(\overline{\alpha})] \ \ (\textasteriskcentered\textasteriskcentered \textasteriskcentered).
\end{equation*} 
To see that (\textasteriskcentered\textasteriskcentered \textasteriskcentered) holds, we need to see that the fragment of $\Gamma$ that defines $\sigma(\overline{\alpha})$ can be defined in $D^+(\N, \lambda^\N)$. This then will give the equivalence in (\textasteriskcentered\textasteriskcentered \textasteriskcentered). Because $\alpha < \delta_0^{\M_\infty^{K(\mathbb{R})|\gamma}} = \delta_0^{\M_\infty^{D^+(\N,\lambda^\N)}}$, pick an $A \in \vec{B}$ such that $\gamma_{A,0}^{D(\N,\lambda^\N)} > \alpha$. Then the fragment of $\Gamma$ that defines $\sigma(\overline{\alpha})$ is definable from A (and $\N|(\delta_0^N)$) in $D^+(\N, \lambda^\N)$, which is what we want. 

The equivalence (\textasteriskcentered\textasteriskcentered \textasteriskcentered) gives us a contradiction.
\end{proof}
The claim finishes the proof of $P \in L(\M_\infty, \Sigma_\infty)$ because the right hand side of the equivalence (\textasteriskcentered\textasteriskcentered) can be computed in $L(\M_\infty,\Sigma_\infty)$. This then implies $\textrm{HOD} = L[P]$ $\subseteq L(\M_\infty, \Sigma_\infty)$.
\end{proof}

\begin{remark}
\rm{Woodin (unpublished) has also computed the full \textrm{HOD} for models satisfying $V = L(\powerset(\mathbb{R}))+\textsf{AD}^+ +\Theta=\theta_0$. To the best of the author's knowledge, here's a very rough idea of his computation. Let $\M_\infty, \Sigma_\infty, P$ be as above. For each $\alpha < \Theta$, let $\Sigma_\alpha$ be the fragment of $\Sigma_\infty$ that moves $\alpha$ along the good branch of a maximal tree. Woodin shows that the structure $(\mathbb{R}^*_G, \langle\Sigma_\alpha \ | \ \alpha < \Theta\rangle)$ can compute the set $P$. This then gives us that $\textrm{HOD} \subseteq L(\M_\infty, \Sigma_\infty)$}.
\end{remark}

\section{The $\Theta = \theta_{\alpha+1}$ case}

Again, we assume $(\textasteriskcentered)$. Assume also that $\Theta = \theta_{\alpha+1}$ for some $\alpha$ and there is a hod pair $(\P,\Sigma)$ as in the hypothesis of Theorem \ref{main theorem} for $M$. By Theorem \ref{main theorem}, $V = K^\Sigma(\mathbb{R})$.
\\
\indent First we need to compute $V^{\textrm{HOD}}_\Theta$. Here's what is done in \cite{ATHM} regarding this computation.
\begin{theorem}[Sargsyan, see Section 4.3 in \cite{ATHM}]
\label{Grigor hod computation}
Let $\Gamma = \{A \subseteq \mathbb{R} \ | \ w(A) < \theta_\alpha\}$. Then there is a hod pair $(\P,\Sigma)$ such that
\begin{enumerate}
\item $\Sigma$ is fullness preserving and has branch condensation;
\item $\Gamma(\P,\Sigma) = \Gamma$ where $\Gamma(\P,\Sigma) = \{A \subseteq \mathbb{R} \ | \ A \leq_w \Sigma_{\Q(\beta)} \textrm{ for some } \beta < \lambda^\Q \textrm{ where } \Q  \textrm{ is a } \Sigma-iterate \textrm{ of }\P\}$;
\item $\mathcal{M}^+_\infty(\P,\Sigma)|\theta_\alpha = V^{\textrm{HOD}}_\alpha$, where $\mathcal{M}^+_\infty(\P,\Sigma)$ is the direct limit of all $\Sigma$-iterates of $\P$.
\end{enumerate}
\end{theorem}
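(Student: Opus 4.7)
The plan is to produce $(\P,\Sigma)$ as the limit of a $\Gamma$-hod pair construction performed inside a sufficiently large coarse background mouse, then to read off properties (1)--(3) from standard features of that construction. First I would fix a good pointclass $\Phi$ lying strictly above the pointclass $\Gamma = \{A\subseteq\mathbb{R} : w(A) < \theta_\alpha\}$; since $\theta_\alpha$ is in the Solovay sequence and $\Theta = \theta_{\alpha+1} < \Theta^V$, such a $\Phi$ exists below $\utilde{\delta}^2_1$. Applying Theorem~\ref{n*x} to $\Phi$, I would choose a coarse tuple $(N^*, \delta, \Sigma^*)$ that Suslin captures a universal $\Phi$-set (and hence captures every set in $\Gamma$ together with a self-justifying system cofinal in $\Gamma$). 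Inside $N^*$ I would run the $\Gamma$-hod pair construction of Definition~\ref{gamma hod pair construction}, producing a sequence $\langle (\P_\beta,\Sigma_\beta) : \beta \le \Omega\rangle$ of hod pairs whose strategies are locally Suslin captured by $N^*$.

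Next, I would identify $(\P,\Sigma)$ as the hod pair at which this construction stabilizes in the sense that $\bigcup_\beta \mathrm{Code}(\Sigma_\beta)$ is Wadge-cofinal in $\Gamma$. Branch condensation of $\Sigma$ is built in: because the construction is backgrounded by $\Sigma^*$, branches of $\Sigma$-trees are computed by lifting to $N^*$ and pulling back, so hulls of iterates are absorbed by iterates (this is the standard lifting argument from \cite{ATHM}). Fullness preservation comes from clause (4) of Definition~\ref{gamma hod pair construction}: at each Woodin stage we have chosen $\P_\beta = Lp_\omega^{\Gamma,\Sigma_{\beta-1}}(\N^\beta_{\delta_\beta})$, and the $\Gamma$-capturing of $N^*$ ensures no further $\Gamma$-mice over the relevant cutpoints have been missed. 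Here I would also invoke Theorem~\ref{branch condensation's consequences} to obtain that $\Sigma$ is positional and commuting, which is needed to make sense of the direct limit system.

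For property (2), $\Gamma(\P,\Sigma) \subseteq \Gamma$ is immediate since each $\Sigma_{\Q(\beta)}$ is in $\Gamma$ by construction. For the reverse containment I would show that the construction does not stabilize too early: if some $A\in\Gamma$ were not Wadge-reducible to any tail strategy of $\Sigma$, then by the minimality of our reflection, SMC (which holds under (\textasteriskcentered), \cite{ATHM}) gives a $\Sigma'$-mouse coding $A$ for some hod pair $(\P',\Sigma')$ of the construction strictly above $(\P,\Sigma)$ in Wadge rank, contradicting the choice of $(\P,\Sigma)$. This is essentially the argument of Theorem~\ref{iterability 0} in the current paper, generalized from the $\theta_0$ setting: use a self-justifying system sealing the gap at $\theta_\alpha$, lift it to canonical term relations along the $L[\vec E,\Sigma]$-construction inside $N^*$, and extract the desired hod pair.

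For property (3), I would set up the directed system $\mathcal{F}$ of hod pairs Dodd-Jensen equivalent to iterates of $(\P,\Sigma)$, show directedness using simultaneous comparison (which works because all pairs involved have branch condensation and are $\Gamma$-fullness preserving), and form $\mathcal{M}_\infty^+(\P,\Sigma)$ as the direct limit. The inclusion $\mathcal{M}_\infty^+(\P,\Sigma)|\theta_\alpha \subseteq V^{\mathrm{HOD}}_{\theta_\alpha}$ is immediate since the whole construction of the direct limit is OD. For the reverse inclusion, I would combine Woodin's representation $\mathrm{HOD} = L[P]$ (Theorem~\ref{WoodinVopenka}) with an argument in the spirit of Lemma~\ref{keylemma}: any $\xi < \theta_\alpha$ is captured by the image $\pi_{(\P,\vec A),\infty}(\bar\xi)$ for some $\vec A$ witnessing membership in $P$ below $\theta_\alpha$, so $P \cap \theta_\alpha$ is decoded from $\mathcal{M}_\infty^+(\P,\Sigma)|\theta_\alpha$. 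The main obstacle, and the reason this is a theorem of Sargsyan rather than a routine consequence of the framework, is precisely verifying that the directed system is truly directed and that $\Sigma$-tails are cofinal in $\Gamma$ — both rest on the technology of comparing hod pairs with branch condensation and on the fullness-preservation/branch-condensation propagation lemmas of \cite{ATHM}, which are what drive this entire program.
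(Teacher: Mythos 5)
This statement is quoted in the paper as background (Theorem \ref{Grigor hod computation} is attributed to Sargsyan with a pointer to Section 4.3 of \cite{ATHM}); the paper itself gives no proof, so there is no in-paper argument to compare against. Judged on its own terms, your outline captures the right overall shape (run a $\Gamma$-hod pair construction in a coarse $N^*_x$ capturing a good pointclass above $\Gamma$, take the stabilizing pair, read off (1)--(3)), but it has a genuine gap at the step you treat as automatic.

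You assert that ``branch condensation of $\Sigma$ is built in'' because the construction is backgrounded and branches are computed by lifting to $N^*$ and pulling back. This is not correct, and it is the main technical obstacle the theorem overcomes. A strategy induced on a model of an $L[\vec E,\Sigma]$-construction by a background strategy does not automatically have branch condensation; what lifting gives you directly is hull condensation, not branch condensation. The paper itself makes this point explicitly in the proof of Theorem \ref{iterability n}: after producing $\Lambda$ from the background construction, it notes ``At this point it's not clear that $\Lambda$ has branch condensation'' and then runs a nontrivial diagonal ($\mathbb{R}$-genericity) argument to obtain an iterate with strong $B$-condensation. Likewise Lemma \ref{direct limit exists} proves branch condensation of a backgrounded strategy by a comparison argument, not by pullback. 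In \cite{ATHM} the corresponding step (propagating branch condensation along the $\Gamma$-hod pair construction, passing to a suitable iterate, and using fullness preservation plus a bad-sequence argument) is a substantial piece of machinery and cannot be waved away. A secondary issue: the sentence ``since $\Theta=\theta_{\alpha+1}<\Theta^V$, such a $\Phi$ exists below $\utilde{\delta}^2_1$'' is off --- in the standing hypothesis $\Theta=\theta_{\alpha+1}=\Theta^V$, and for $\alpha>0$ the pointclass $\Gamma$ already strictly contains $\utilde{\Delta}^2_1$, so no good pointclass above $\Gamma$ can lie below $\utilde{\delta}^2_1$. The correct justification is that there is a Suslin cardinal in the interval $(\theta_\alpha,\theta_{\alpha+1})$ (Theorem \ref{fundamental result of ad+}), which yields a good pointclass $\Phi$ with $\Gamma\subsetneq\utilde{\Delta}_\Phi$ to which Theorem \ref{n*x} applies. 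Finally, your appeal to ``the minimality of our reflection'' in the argument for $\Gamma\subseteq\Gamma(\P,\Sigma)$ is unmotivated, since your setup involves no $\Sigma_1$-reflection; the generation argument in \cite{ATHM} instead goes through comparison of candidate hod pairs against the construction together with SMC.
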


It is clear that there is no hod pair $(\P,\Sigma)$ satisfying Theorem \ref{Grigor hod computation} with $\Gamma$ replaced by $\powerset(\mathbb{R})$ as this would imply that $\Sigma \notin V$. So to compute $V^{\textrm{HOD}}_\Theta$, we need to mimic the computation in Section 2. For a more detailed discussion regarding Definitions \ref{suitability}, \ref{B iterability}, and \ref{strong B iterability}, see Section 3.1 of \cite{ATHM}.
\begin{definition}[$n$-suitable pair]
\label{suitability}
$(\P,\Sigma)$ is an n-suitable pair if there is $\delta$ such that $(\P|(\delta^{+\omega})^\P, \Sigma)$ is a hod pair and 
\begin{enumerate}
\item $\P \vDash$ \textsf{\textsf{ZF}C} - Replacement + ``there are n Woodin cardinals, $\eta_0 < \eta_1 < ... < \eta_{n-1}$ above $\delta$";
\item $o(\P) = sup_{i<\omega}{(\eta_{n-1})^{+i}}^P$;
\item $\P$ is a $\Sigma$-mouse over $\P|\delta$;
\item for any $\P$-cardinal $\eta > \delta$, if $\eta$ is a strong cutpoint then $\P|(\eta^+)^\P = Lp^{\Sigma}(\P|\eta)$.
\end{enumerate}
\end{definition}
For $\P, \delta$ as in the above definition, let $\P^- = \P|(\delta^{+\omega})^\P$ and $\mathbb{B}(\P^-,\Sigma) = \{B \subseteq \powerset(\mathbb{R})\times\mathbb{R}\times\mathbb{R} \ | \ B \textrm{ is } OD, \textrm{ and for any } (\Q,\Lambda) \textrm{ iterate of } (\P^-, \Sigma), \textrm{ and for any } (x,y) \in B_{(\Q,\Lambda)}, x \textrm{ codes } \Q\}$. Suppose $B \in \mathbb{B}(P^-,\Sigma)$ and $\kappa < o(\P)$. Let $\tau^{\P}_{B,\kappa}$ be the canonical term in $\P$ that captures $B$ at $\kappa$ i.e. for any $g \subseteq Col(\omega,\kappa)$ generic over $\P$
\begin{equation*}
B_{(\P^-,\Sigma)} \cap \P[g] = (\tau^\P_{B,\kappa})_g.
\end{equation*}
For each $m<\omega$, let
\begin{equation*}
\gamma^{\P,\Sigma}_{B,m} = \textrm{sup}(Hull^\P(\tau^\P_{B,(\eta_{n-1}^{+m})^\P})\cap \eta_0),
\end{equation*} 
\begin{equation*}
H^{\P,\Sigma}_{B,m} = Hull^\P(\gamma^{\P,\Sigma}_{B,m}\cup\{\tau^\P_{B,(\eta_{n-1}^{+m})^\P}\}),
\end{equation*}
\begin{equation*}
\gamma^{\P,\Sigma}_B = \textrm{sup}_{m<\omega}\gamma^{\P,\Sigma}_{B,m},
\end{equation*}
and
\begin{equation*}
H^{\P,\Sigma}_B = \cup_{m<\omega}H^{\P,\Sigma}_{B,m}.
\end{equation*} 
Similar definitions can be given for $\gamma^{\P,\Sigma}_{\vec{B},m}, H^{\P,\Sigma}_{\vec{B},m}, \gamma^{\P,\Sigma}_{\vec{B}}, H^{\P,\Sigma}_{\vec{B}}$ for any finite sequence $\vec{B} \in \mathbb{B}(\P^-,\Sigma)$. One just needs to include relevant terms for each element of $\vec{B}$ in each relevant hull. Now we define the notion of $B$-iterability.
\begin{definition}[$B$-iterability]
\label{B iterability}
Let $(\P,\Sigma)$ be an $n$-suitable pair and $B \in \mathbb{B}(\P^-,\Sigma)$. We say $(\P,\Sigma)$ is $B$-iterable if for all $k<\omega$, player II has a winning quasi-strategy for the game $G^{(P,\Sigma)}_{B,k}$ defined as follows. The game consists of $k$ rounds. Each round consists of a main round and a subround. Let $(\P_0,\Sigma_0) = (\P,\Sigma)$. In the main round of the first round, player I plays countable stacks of normal nondropping trees based on $\P^-_0$ or its images and player II plays according to $\Sigma_0$ or its tails. If the branches chosen by player II does not move some term for $B$ correctly, he loses. Player I has to exit the round at a countable stage; otherwise, he loses. Suppose $(\P^*,\Sigma^*)$ is the last model after the main round is finished. In the subround, player I plays a normal tree above $(\P^*)^-$ or its images based on a window of two consecutive Woodins. Player II plays a branch that moves all terms for $B$ correctly. Otherwise, he loses. Suppose $(\P_1,\Sigma_1)$ is the last model of the subround. If II hasn't lost, the next round proceeds the same way as the previous one but for the pair $(\P_1,\Sigma_1)$. If the game lasts for k rounds, II wins.
\end{definition}
\begin{definition}[Strong $B$-iterability]
\label{strong B iterability}
Let $(\P,\Sigma)$ be an $n$-suitable pair and $B \in \mathbb{B}(\P^-,\Sigma)$. We say $(\P,\Sigma)$ is strongly $B$-iterable if $(\P,\Sigma)$ is $B$-iterable and if $r_1$ is a run of $G^{\P,\Sigma}_{B,n_1}$ and $r_1$ is a run of $G^{\P,\Sigma}_{B,n_2}$ for some $n_1,n_2 < \omega$ according to the winning quasi-strategy of $\P$ and the runs produce the same end model $\Q$ then the runs move the hull $H^{\P,\Sigma}_B$ the same way. That is if $i_1$ and $i_2$ are $B$-iteration maps accoring to $r_1$ and $r_2$ respectively then $i_1\rest H^{\P,\Sigma}_B = i_2\rest H^{\P,\Sigma}_B$.
\end{definition}
 Now we're ready to define our direct limit system. Let
\begin{eqnarray*}
\mathcal{F} = \{(P,\Sigma, \vec{B}) \ &|& \ \vec{B} \in \mathbb{B}(P^-,\Sigma)^{<\omega}, (P^-,\Sigma) \textrm{ satisfies Theorem \ref{Grigor hod computation}}, (P,\Sigma) \textrm{ is $n$-suitable } \\&& \textrm{for some $n$, }\textrm{and } (P, \Sigma) \textrm{ is strongly } \vec{B}\textrm{-iterable}\}.
\end{eqnarray*}
The ordering on $\mathcal{F}$ is defined as follows:
\begin{eqnarray*}
(\P,\Sigma, \vec{B}) \preccurlyeq (\Q, \Lambda, \vec{C}) &iff& \vec{B} \subseteq \vec{C}, \exists k\exists r(r \textrm{ is a run of } G^{\P,\Sigma}_{B,k} \textrm{ with the last model } \P^* \\ && \textrm{such that } (\P^*)^- = \Q^-, \ \Sigma_{(\P^*)^-} = \Lambda, \P^* = \Q|(\eta^{+\omega})^\Q \\ &&\textrm{ where } \Q \vDash \eta > o(Q^-) \  is \ Woodin). 
\end{eqnarray*}
Suppose $(P,\Sigma, \vec{B}) \preccurlyeq (Q, \Lambda, \vec{C})$ then there is a unique map $\pi^{(\P,\Sigma),(\Q,\Delta)}_{\vec{B}}: H^{\P,\Sigma}_{\vec{B}} \rightarrow H^{\Q,\Lambda}_{\vec{B}}$. $(\mathcal{F}, \preccurlyeq)$ is then directed. Let
\begin{equation*} 
\mathcal{M}_\infty = \textrm{direct limit of } (\mathcal{F},\preccurlyeq) \textrm{ under maps }\pi^{(\P,\Sigma),(\Q,\Delta)}_{\vec{B}}. 
\end{equation*}
Also for each $(\P,\Sigma,\vec{B}) \in \mathcal{F}$, let
\begin{equation*}
\pi^{(\P,\Sigma),\infty}_{\vec{B}}: H^{\P,\Sigma}_{\vec{B}} \rightarrow \M_\infty
\end{equation*}
be the natural map.
\\
\indent Clearly, $\mathcal{M}_\infty \subseteq \textrm{HOD}$. But first, we need to show $\mathcal{F} \neq \emptyset$. In fact, we prove a stronger statement.
\begin{theorem}
\label{iterability n}
Suppose $(\P,\Sigma)$ satisfies Theorem \ref{Grigor hod computation}. Let $B \in \mathbb{B}(\P,\Sigma)$. Then for each $1 \leq n < \omega,$ there is a $\Q$ such that $\Q^-$ is a $\Sigma$-iterate of $\P^-$, $(Q, \Sigma_{\Q^-})$ is $n$-suitable and $(\Q,\Sigma_{\Q^-}, B) \in \mathcal{F}$.
\end{theorem}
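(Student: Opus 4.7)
The plan is to mimic the proof of \rthm{iterability 0}, now relativized to the hod pair $(\P,\Sigma)$ and working inside $K^\Sigma(\mathbb{R})$ in place of $K(\mathbb{R})$. Fix $n\geq 1$, and assume for contradiction the theorem fails for some $(\P,\Sigma,B,n)$. Since $V=K^\Sigma(\mathbb{R})$ by \rthm{main theorem}, the failure is a $\Sigma_1$ statement in parameters coded from $(\P,\Sigma,B,n)$, so $\Sigma_1$-reflection (\rthm{fundamental result of ad+}) produces a least $\gamma<\utilde{\delta}^2_1$ such that $N=K^\Sigma(\mathbb{R})|\gamma$ models this failure. By minimality, $\gamma$ ends a proper weak $\Sigma_1$-gap $[\bar\gamma,\gamma]$, and the scales analysis at the end of such a gap (as in \cite{Scalesendgap}) delivers a self-justifying system $\vec B=\langle B_i:i<\omega\rangle$ of OD$^N$ sets sealing the gap and cofinal in the Wadge hierarchy of $N$; we arrange $B\in\vec B$.

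Next, set $\Gamma=\Sigma_1^{K^\Sigma(\mathbb{R})|\bar\gamma}$ and let $\Omega$ be a good pointclass with $\powerset(\mathbb{R})^N\subsetneq\utilde{\Delta}_\Omega$. By \rthm{n*x}, fix a coarse $\Omega$-Woodin, fully iterable premouse $N^*$ with strategy $\Psi$ Suslin-capturing $Code(\Sigma)$ together with the entire sequence $\vec B$. By the argument used for Claim~1 in the proof of \rprop{ASatAD+}, $N^*$ has club-in-$OR^{N^*}$ many $\eta$ with $Lp^{\Gamma,\Sigma}(N^*|\eta)\vDash$ ``$\eta$ is Woodin''. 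Inside $N^*$, run the $\Gamma$-hod pair construction of \rdef{gamma hod pair construction} relative to $\Sigma$. Since $(\P^-,\Sigma)$ satisfies \rthm{Grigor hod computation}, standard arguments from Section~4 of \cite{ATHM} show that at some stage the construction produces a hod pair $(\P^*,\Sigma^*)$ with $\P^*$ a $\Sigma$-iterate of $\P^-$ and $\Sigma^* = \Sigma_{\P^*}$. Continuing the $L[\vec E,\Sigma^*]$-construction above $\P^*$ through the next $n$ $\Gamma$-Woodins of $N^*$ yields a $\Sigma^*$-premouse $\Q$ with $\Q^-=\P^*$, exactly $n$ Woodin cardinals $\eta_0<\cdots<\eta_{n-1}$ above $\delta^{\P^*}$, $o(\Q)=\sup_i(\eta_{n-1}^{+i})^\Q$, and $Lp^{\Gamma,\Sigma^*}(\Q|\xi)\trianglelefteq\Q$ at every strong cutpoint $\xi>\delta^{\P^*}$; so $(\Q,\Sigma^*)$ is $n$-suitable.

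Finally, lift the strategy $\Lambda$ for $\Q$ from $\Psi$ by the usual resurrection/background lift. Because $\Psi$ moves the Suslin capturing trees for each $B_i$ correctly and by term condensation for the sjs, every $\Lambda$-iterate of $\Q$ carries the correct canonical terms $\tau_{B_i}$. Hence $\Lambda$, restricted to plays of $G^{(\Q,\Sigma^*)}_{B,k}$ for each $k$, is a winning quasi-strategy for player II in every round (both the main rounds, which are finite stacks based on $\Q^-$, and the subrounds, which are normal trees in Woodin windows), so $(\Q,\Sigma^*)$ is $B$-iterable. The whole construction is visible inside $N$, so once we verify \emph{strong} $B$-iterability we will contradict the choice of $\gamma$. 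The main obstacle is precisely this last verification: we must show that any two $B$-iteration runs of $(\Q,\Sigma^*)$ ending at a common model induce the same map on $H^{\Q,\Sigma}_B$. This follows because $\vec B$ seals the gap and is cofinal in $\powerset(\mathbb{R})^N$, so the sjs-guided branch chosen at each stage is the unique branch simultaneously realizing the correct terms for all the $B_i$'s, forcing the action on $H^{\Q,\Sigma}_B$ to depend only on the end model and not on the particular run.
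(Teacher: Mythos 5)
The crux --- and precisely the step the paper flags as hard --- is missing from your proposal. You attempt to derive strong $B$-iterability by placing $B$ inside a self-justifying system $\vec B$ of OD sets of reals that seals the reflected gap, and then appealing to term condensation for the sjs. But $B\in\mathbb{B}(\P,\Sigma)$ is by definition a subset of $\powerset(\mathbb{R})\times\mathbb{R}\times\mathbb{R}$, not a set of reals; it cannot be a member of such an sjs, and there is no known analogue of the self-justifying-system machinery for elements of $\mathbb{B}(\P,\Sigma)$. The paper makes this explicit: it remarks that the proof of Theorem~\ref{iterability 0} does not generalize because it is unclear what an sjs for sets in $\mathbb{B}(\P,\Sigma)$ should even be, so the induced strategy $\Lambda$ for $\Q$ cannot be argued to have branch condensation in the direct sjs-guided way.

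What the paper does instead, after building $(\Q,\Lambda)$ roughly as you do, is to prove \emph{strong $B$-condensation} for a suitable $\Lambda$-iterate by contradiction. Assuming it fails yields, inside $N$, an infinite sequence of maps $\pi_i=j_i\circ\sigma_i$ each of which fails $B$-term correctness. The crucial tool, which your proposal does not use, is that every $\mathbb{R}$-genericity iterate of $\Q$ above its lowest Woodin realizes $N$ as its new derived model. With this, the paper constructs an $\omega\times\omega$ array of models and maps (see Figure~\ref{diagram}): starting from the bad sequence, one performs genericity iterations that lift across the whole array so that the direct limits along rows and columns are wellfounded, $\Omega$-full, and have derived model $N$. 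The diagonal direct-limit maps $\pi^\omega_n$, $\sigma^\omega_n$, $j^\omega_n$ must eventually fix the ordinal parameter $s$ defining $B$ over $N$, hence move the $B$-terms correctly --- contradicting the hypothesis. Your final paragraph asserts strong $B$-iterability with no substitute for this; it needs to be replaced by an array/derived-model argument of this kind rather than an appeal to sjs-guidance.
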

\begin{proof}
Suppose not. By $\Sigma_1$-reflection (Theorem \ref{fundamental result of ad+}), there is an transitive model $N$ coded by a Suslin, co-Suslin set of reals such that $Code(\Sigma) \in \powerset(\mathbb{R})^N$ and
\begin{eqnarray*}
N &\vDash& \textsf{ZF}^- + \textsf{AD}^+ + S\textsf{MC} + ``\Theta \textrm{ exists and is successor in the Solovay sequence }" + \\ && ``\exists B \in \mathbb{B}(\P,\Sigma)(\nexists \Q, n)((\Q,\Sigma) \textrm{ is n-suitable and }(\Q,\Sigma,B) \in \mathcal{F})".
\end{eqnarray*}
We take a minimal such $N$ and fix a $B \in \mathbb{B}(\P,\Sigma)^N$ witnessing the failure of the Theorem in $N$. Using Theorem \ref{n*x} and the assumption on $N$, there is an $x \in \mathbb{R}$ and a tuple $\langle N^*_x, \delta_x, \Sigma_x\rangle$ satisfying the conclusions of Theorem \ref{n*x} relative to $\Gamma$- a good pointclass containing $(\powerset(\mathbb{R})^N, N's\textrm{ first order theory})$. Futhermore, let's assume that $N^*_x$ Suslin captures $\langle A \ | \ A \textrm{ is projective in }\Sigma\rangle)$. Let $\Omega = \powerset(\mathbb{R})^N$. For simplicity, we show that in $N$, there is a $\Sigma$-iterate $(\R,\Sigma_\R)$ such that there is a $1$-suitable $(\S,\Sigma_\R)$ such that $(\S,\Sigma_\R,B) \in \mathcal{F}$. 
\\
\indent By the assumption on $N$, $N \vDash V = K^\Sigma(\mathbb{R})$. Now $N^*_x$ has club many $(\Sigma^2_1)^\Omega$ Woodins below $\delta_x$ by a standard argument (see \cite{TWMS}). Hence, the full background construction $L[E, \Sigma][\P]$ done in $N^*_x$ will reach a model having $\omega$ Woodins (which are the first $\omega \ (\Sigma^2_1)^\Omega$ Woodins in $N^*_x$) and projecting across the sup of its first $\omega$ Woodins. Let $\Q$ be the first model in the construction with that property. By coring down if necessary, we may assume that $\Q$ is sound. Let $\langle \delta_i^\Q \ | \ i < \omega \rangle$ be the first $\omega$ Woodins of $\Q$ above $o(\P)$. A similar self-explanatory notation will be used to denote the Woodins of any $\Lambda$-iterate of $\Q$. Hence $\rho_\omega(\Q) < sup_{i<\omega}\delta_i$. Let $\Lambda$ (which extends $\Sigma$) be the strategy of $\Q$ induced from the background universe. $\Lambda$ is $\Omega$-fullness preserving. At this point it's not clear that $\Lambda$ has branch condensation. The proof of Theorem \ref{iterability 0} doesn't generalize as it's not clear what the corresponding notion of a self-justifying-system for sets in $\mathbb{B}(\P,\Sigma)$ is.
\\
\indent We in fact show a bit more. We show that an iterate $(\R,\Lambda_\R)$ of $(\Q,\Lambda)$ has strong $B$ condensation in that if $i: \R \rightarrow \S$ is according to $\Lambda_\R$ and below $\delta_0^\Q$ and $j: \R \rightarrow \mathcal{W}$ is such that there is a $k: \mathcal{W} \rightarrow \S$ such that $i = k\circ j$ then $i(\tau^\R_{B,\delta_0^\R}) = \tau^\S_{B,\delta_0^\S}$, $\mathcal{W}$ is full, and $k^{-1}(\tau^\S_{B,\delta_0^\S}) = \tau^{\mathcal{W}}_{B,j(\delta_0^\R)}$. That we get $\mathcal{W}$ being full is easy because $\Lambda \notin \N$. So we only need to prove the other two clauses. We also get strong $B$-iterability by the Dodd-Jensen property of $\Lambda_\R$. Once we have this pair $(\R,\Lambda_\R)$, we can just let our desired $\S$ to be $\R|((\delta_0^{\R})^{+\omega})^\R$. Suppose not. Using the property of $\Q$ and the relativized (to $\Sigma$) Prikry forcing in $N$ (see \cite{steel08}), we get that for any $n$, there is an iterate $\R$ of $\Q$ (above $\delta_0^\Q$) extending a Prikry generic and having $N$ as the (new) derived model (computed at the sup of the first $\omega$ Woodins above $o(\P)$). Furthermore, this property holds for any $\Lambda$ iterate of $\Q$. Without going further into details of the techniques used in \cite{steel08}, we remark that if $\R$ is an $\mathbb{R}$-genericity iterate of $\Q$, then the new derived model of $\R$ is $N$. In other words, once we know one such $\mathbb{R}$-genericity iterate of $\Q$ realizes $N$ as its derived model then all $\mathbb{R}$-genericity iterates of $\Q$ do. 
Let $(\phi, s)$ define $B$ over $N$, i.e.
\begin{equation*}
(\mathcal{R},\Psi, x, y) \in B \ iff \ N \vDash \phi[((\mathcal{R},\Psi, x, y)), s].
\end{equation*}   
The following argument mirrors that of Lemma 3.2.15 in \cite{ATHM} though it's not clear to the author who this argument is orginially due to. The process below is described in Figure \ref{diagram}. From now to the end of the proof, all stacks on $\Q$ or its iterates thereof are below the $\delta_0^\Q$ or its image. By our assumption, there is $\langle \vec{\mathcal{T}}_i, \vec{\mathcal{S}}_i, \mathcal{Q}_i,\mathcal{R}_i, \pi_i, \sigma_i, j_i \ | \ i < \omega\rangle \in N$ such that
\begin{enumerate}
\item $\mathcal{Q}_0 = \Q$; $\vec{\mathcal{T}}_0$ is a stack on $\Q$ according to $\Lambda$ with last model $\mathcal{Q}_1$; $\pi_0 = i^{\vec{\mathcal{T}}_0}$; $\vec{\mathcal{S}}_0$ is a stack on $\Q$ with last model $\mathcal{R}_0$; $\sigma_0 = i^{\vec{\mathcal{S}}_0}$; and $j_0: \mathcal{R}_0 \rightarrow \mathcal{Q}_1$.
\item $\vec{\mathcal{T}}_i$ is a stack on $\mathcal{Q}_i$ according to $\Lambda$ with last model $\mathcal{Q}_{i+1}$; $\pi_i = i^{\vec{\mathcal{T}}_i}$; $\vec{\mathcal{S}}_i$ is a stack on $\mathcal{Q}_i$ with last model $\mathcal{R}_i$; $\sigma_i = i^{\vec{\mathcal{S}}_i}$; $j_0: \mathcal{R}_i \rightarrow \mathcal{Q}_{i+1}$.
\item for all $k$, $\pi_k = j_k\circ\sigma_k$.
\item for all $k$, $\pi_k(\tau^{\Q_k}_{B,\delta_0^{\Q_k}}) \neq \tau^{\Q_{k+1}}_{B,\delta_0^{\Q_{k+1}}}$ or $j_k(\tau^{\mathcal{R}_k}_{B,\delta_0^{\R_k}}) \neq \tau^{\mathcal{Q}_{k+1}}_{B,\delta_0^{\Q_{k+1}}}$.
\end{enumerate}
  
Let $\mathcal{Q}_\omega$ be the direct limit of the $\mathcal{Q}_i$'s under maps $\pi_i$'s. First we rename the $\langle\Q_i,\R_i,\pi_i,\sigma_i,j_i$$\ | \ i<\omega\rangle$ into $\langle\Q_i^0,\R_i^0,\pi_i^0,\sigma_i^0,j_i^0 \ | \ i<\omega\rangle$. We then assume that $N$ is countable (by working with a countable elementary substructure of $N$) and fix (in $V$) $\langle x_i \ | \ i<\omega\rangle$- a generic enumeration of $\mathbb{R}$. Using our assumption on $\Q$, we get $\langle \mathcal{Q}^n_i,\mathcal{R}^n_i, \pi^n_i, \sigma^n_i, j^n_i, \tau^n_1,k^n_i \ | \ n, i \leq \omega\rangle$ such that
\begin{enumerate}
\item $\mathcal{Q}^\omega_i$ is the direct limit of the $\Q^n_i$'s under maps $\tau^n_i$'s for all $i\leq \omega$.
\item $\mathcal{R}^\omega_i$ is the direct limit of the $\mathcal{R}^n_i$'s under maps $k^n_i$'s for all $i<\omega$.
\item $\Q^n_\omega$ is the direct limit of the $\Q^n_i$'s under maps $\pi^n_i$'s.
\item for all $n\leq \omega$, $i<\omega$, $\pi^n_i: \mathcal{Q}^n_i \rightarrow \mathcal{Q}^n_{i+1}$; $\sigma^n_i: \mathcal{Q}^n_i \rightarrow \mathcal{R}^n_i$; $j^n_i: \mathcal{R}^n_i \rightarrow \mathcal{Q}^n_{i+1}$ and $\pi^n_i = j^n_i\circ\sigma^n_i$.
\item Derived model of the $\mathcal{Q}^n_i$'s, $\mathcal{R}^n_i$'s is $N$.
\end{enumerate}
Then we start by iterating $\mathcal{Q}^0_0$ above $\delta_0^{\Q^0_0}$ to $\Q^1_0$ to make $x_0$-generic at $\delta_1^{\Q^1_0}$. During this process, we lift the genericity iteration tree to all $\R^0_n$ for $n<\omega$ and $\Q^0_n$ for $n \leq \omega$. We pick branches for the tree on $\Q^0_0$ by picking branches for the lift-up tree on $\Q^0_\omega$ using $\Lambda_{\Q^0_\omega}$. Let $\tau^0_0: \Q^0_0\rightarrow \Q^1_0$ be the iteration map and $\W$ be the end model of the lift-up tree on $\Q^0_\omega$. We then iterate the end model of the lifted tree on $\R^0_0$ to $\R^1_0$ to make $x_0$ generic at $\delta_1^{\R^1_0}$ with branches being picked by lifting the iteration tree onto $\W$ and using the branches according to $\Lambda_\W$. Let $k^0_0:\R^0_0 \rightarrow \R^1_0$ be the iteration embedding, $\sigma^1_0: \Q^1_0\rightarrow R^1_0$ be the natural map, and $\mathcal{X}$ be the end model of the lifted tree on the $\W$ side. We then iterate the end model of the lifted stack on $\Q^0_1$ to $\Q^1_1$ to make $x_0$ generic at $\delta_1^{\Q^1_1}$ with branches being picked by lifting the tree to $\mathcal{X}$ and using branches picked by $\Lambda_{\mathcal{X}}$. Let $\tau^0_1:\Q^0_1 \rightarrow \Q^1_1$ be the iteration embedding, $j^1_0: \R^1_0 \rightarrow \Q^1_1$ be the natural map, and $\pi^1_0 = j^1_0\circ\sigma^1_0$. Continue this process of making $x_0$ generic for the later models $\R^0_n$'s and $\Q^0_n$'s for $n < \omega$. We then let $\Q^1_\omega$ be the direct limit of the $\Q^1_n$ under maps $\pi^1_n$'s. We then start at $\Q^1_0$ and repeat the above process to make $x_1$ generic appropriate iterates of $\delta_2^{\Q^1_0}$ etc. This whole process define models and maps $\langle \mathcal{Q}^n_i,\mathcal{R}^n_i, \pi^n_i, \sigma^n_i, j^n_i, \tau^n_1,k^n_i \ | \ n, i \leq \omega\rangle$ as described above. See \rfig{diagram}.
\\
\indent Note that by our construction, for all $n < \omega$, the maps $\pi^0_n$'s and $\tau^n_\omega$'s are via $\Lambda$ or its appropriate tails; furthermore, $\Q^\omega_\omega$ is wellfounded and full (with respect to mice in $N$). This in turns implies that the direct limits $\Q^\omega_n$'s and $\R^\omega_n$'s are wellfounded and full. We must then have that for some $k$, for all $n \geq k$, $\pi^\omega_n(s) = s$. This implies that for all $n \geq k$
\begin{equation*}
\pi^\omega_n(\tau_{B,\delta_0^{\Q^\omega_n}}^{\mathcal{Q}^\omega_n}) = \tau_{B,\delta_0^{\Q^\omega_{n+1}}}^{\mathcal{Q}^\omega_{n+1}}. 
\end{equation*}
We can also assume that for all $n\geq k$, $\sigma^\omega_n(s) = s, j^\omega_n(s) = s$. Hence
\begin{equation*}
\sigma^\omega_n(\tau_{B,\delta_0^{\Q^\omega_n}}^{\mathcal{Q}^\omega_n}) = \tau_{B,\delta_0^{\R^\omega_{n}}}^{\mathcal{R}^\omega_{n}}. ;
\end{equation*}
\begin{equation*}
j^\omega_n(\tau_{B,\delta_0^{\R^\omega_n}}^{\mathcal{R}^\omega_n}) = \tau_{B,\delta_0^{\Q^\omega_{n+1}}}^{\mathcal{Q}^\omega_{n+1}}. ;
\end{equation*}
This is a contradiction, hence we're done.
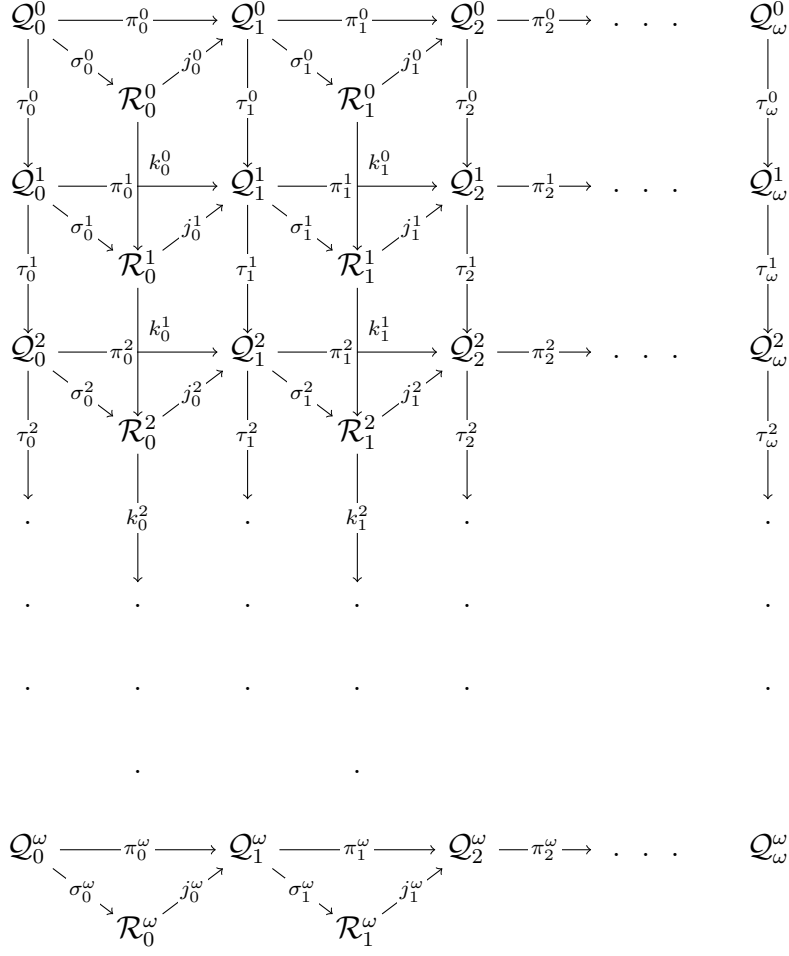
\begin{figure}
\centering
\begin{tikzpicture}[description/.style={fill=white,inner sep=1pt}]
\matrix (m) [matrix of math nodes, row sep=1.5em,
column sep=1.5em, text height=1ex, text depth=0.15ex]
{ \Q^0_0 & & \Q^0_1 & & \Q^0_2 & & \ .\ \ . \ \ . & \Q^0_\omega\\
  & \R^0_0 & & \R^0_1 & & & & \\
\Q^1_0 & & \Q^1_1 & & \Q^1_2 & & \ .\ \ . \ \ . & \Q^1_\omega\\
  & \R^1_0 & & \R^1_1 & & & & \\
\Q^2_0 & & \Q^2_1 & & \Q^2_2 & & \ .\ \ . \ \ . & \Q^2_\omega\\
  & \R^2_0 & & \R^2_1 & & & & \\
. &   & . &   & . &   & & . \\
. & . & . & . & . &   & & . \\
. & . & . & . & . &   & & . \\
  & . &   & . &   &   & &   \\
\Q^\omega_0 & & \Q^\omega_1 & & \Q^\omega_2 & & \ .\ \ . \ \ . & \Q^\omega_\omega\\
  & \R^\omega_0 & & \R^\omega_1 & & & & \\
};
%\$Q^1_0$ & $Q^1_1$ & $Q^1_2$ & ... & $Q^1_\omega$ \\
 %$Q^2_0$ & $Q^2_1$ & $Q^2_2$ & ... & $Q^2_\omega$ \\
 %&  & & & &\\
%$Q^2_0$ & $Q^2_1$ & $Q^2_2$ & ... & $Q^\omega_\omega$ \\

\path[->,font=\scriptsize]
(m-1-1) edge node[description] {$ \pi^0_0 $} (m-1-3)
edge node[description] {$ \sigma^0_0 $} (m-2-2)
(m-2-2) edge node[description] {$ j^0_0 $} (m-1-3)
(m-1-3) edge node[description] {$ \pi^0_1 $} (m-1-5)
edge node[description] {$ \sigma^0_1 $} (m-2-4)
(m-2-4) edge node[description] {$ j^0_1$} (m-1-5)
(m-1-5) edge node[description] {$ \pi^0_2$} (m-1-7)
(m-3-1) edge node[left][description] {$ \pi^1_0 $} (m-3-3)
edge node[description] {$ \sigma^1_0 $} (m-4-2)
(m-4-2) edge node[description] {$ j^1_0 $} (m-3-3)
(m-3-3) edge node[left][description] {$ \pi^1_1 $} (m-3-5)
edge node[description] {$ \sigma^1_1 $} (m-4-4)
(m-4-4) edge node[description] {$ j^1_1$} (m-3-5)
(m-3-5) edge node[description] {$ \pi^1_2$} (m-3-7)
(m-1-1)edge node[description]{$ \tau^0_0 $}(m-3-1)
(m-2-2)edge node[above right]{$ k^0_0 $}(m-4-2)
(m-1-3)edge node[description]{$ \tau^0_1$}(m-3-3)
(m-2-4)edge node[above right]{$ k^0_1 $}(m-4-4)
(m-1-5)edge node[description]{$ \tau^0_2$}(m-3-5)
(m-1-8)edge node[description]{$\tau^0_\omega$}(m-3-8)
(m-5-1) edge node[left][description] {$ \pi^2_0 $} (m-5-3)
edge node[description] {$ \sigma^2_0 $} (m-6-2)
(m-6-2) edge node[description] {$ j^2_0 $} (m-5-3)
(m-5-3) edge node[left][description] {$ \pi^2_1 $} (m-5-5)
edge node[description] {$ \sigma^2_1 $} (m-6-4)
(m-6-4) edge node[description] {$ j^2_1$} (m-5-5)
(m-5-5) edge node[description] {$ \pi^2_2$} (m-5-7)
(m-3-1)edge node[description]{$ \tau^1_0 $}(m-5-1)
(m-4-2)edge node[above right]{$ k^1_0 $}(m-6-2)
(m-3-3)edge node[description]{$ \tau^1_1$}(m-5-3)
(m-4-4)edge node[above right]{$ k^1_1 $}(m-6-4)
(m-3-5)edge node[description]{$ \tau^1_2$}(m-5-5)
(m-3-8)edge node[description]{$ \tau^1_\omega $}(m-5-8)
(m-5-1)edge node[description]{$ \tau^2_0$}(m-7-1)
(m-6-2)edge node[description]{$ k^2_0$}(m-8-2)
(m-5-3)edge node[description]{$ \tau^2_1$}(m-7-3)
(m-6-4)edge node[description]{$ k^2_1$}(m-8-4)
(m-5-5)edge node[description]{$ \tau^2_2$}(m-7-5)
(m-5-8)edge node[description]{$ \tau^2_\omega$}(m-7-8)
(m-11-1) edge node[description] {$ \pi^\omega_0 $} (m-11-3)
edge node[description] {$ \sigma^\omega_0 $} (m-12-2)
(m-12-2) edge node[description] {$ j^\omega_0 $} (m-11-3)
(m-11-3) edge node[description] {$ \pi^\omega_1 $} (m-11-5)
edge node[description] {$ \sigma^\omega_1 $} (m-12-4)
(m-12-4) edge node[description] {$ j^\omega_1$} (m-11-5)
(m-11-5) edge node[description] {$ \pi^\omega_2 $}(m-11-7)
;
\end{tikzpicture}
\caption{The process in Theorem \ref{iterability n}}
\label{diagram}
\end{figure}
%This $\mathcal{R}$ is $\mathcal{Q}^\infty_i$. Then we lift the iteration tree giving rise to $\mathcal{Q}^\infty_i$ to $\mathcal{R}_i$ then further lift to $\mathcal{Q}_{i+1}$. Let $\mathcal{V}, \mathcal{W}$ be the end models of the lift-up trees on $\mathcal{R}_{i},\mathcal{Q}_{i+1}$ respectively. We then iterate $\mathcal{W}$ to $\mathcal{Y}$ such that $\mathcal{Y}$'s derived model is $N$. Then $\mathcal{V} = \mathcal{R}^\infty_i$ and $\mathcal{Y} = \mathcal{Q}^\infty_{i+1}$.
\end{proof}
\begin{remark}
\rm{The proof of Theorem \ref{iterability n} also shows that if $(\P,\Sigma)$ is $n$-suitable and $(\P,\Sigma,B) \in \mathcal{F}$ and $C \in \mathbb{B}(\P^-,\Sigma)$ then there is a $B$-iterate $\Q$ of $\P$ such that $(\Q,\Sigma_{\Q^-},B\oplus C)\in \mathcal{F}$; in fact, $\Q$ has strong $B\oplus C$-condensation as defined in the proof of Theorem \ref{iterability n}.}
\end{remark}
It is easy to see that ${\mathcal{M}_\infty}|{\theta_\alpha} = V^{\textrm{HOD}}_{\theta_\alpha}$. Let $\langle\eta_i \ | \ i<\omega\rangle$ be the increasing enumeration of Woodin cardinals in $\mathcal{M}_\infty$ larger than $\theta_\alpha$. Theorem \ref{iterability n} is used to show that $\mathcal{M}_\infty$ is large enough in that
\begin{lemma}
\label{Minftylarge}
\begin{enumerate}
\item $\mathcal{M}_\infty$ is well-founded.
\item $\mathcal{M}_\infty|\eta_0 = V^{\textrm{HOD}}_\Theta$. In particular, $\eta_0 = \Theta$.
\end{enumerate}
\end{lemma}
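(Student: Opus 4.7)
The plan is to follow the template of Section 2: first derive well-foundedness of $\M_\infty$ by a standard direct-limit argument, then identify the bottom piece $\M_\infty|\theta_\alpha$ with $V^{\textrm{HOD}}_{\theta_\alpha}$ via Sargsyan's \rthm{Grigor hod computation}, and finally bridge up to $\Theta$ using Woodin's Vopenka representation from \rthm{WoodinVopenka} together with the terms $\tau^{\M_\infty}_B$ for $B\in\mathbb{B}(\P^-,\Sigma)$.

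For (1), suppose for contradiction that $\langle\alpha_n:n<\omega\rangle$ is an $\omega$-descending sequence in $\M_\infty$, and fix $(\P_n,\Sigma_n,\vec B_n)\in\F$ with $\alpha_n=\pi^{(\P_n,\Sigma_n),\infty}_{\vec B_n}(\beta_n)$. Using pairwise directedness of $\F$ (obtained by boolean-comparison of strongly $\vec B$-iterable suitable pairs, as in the proof of \rthm{iterability n}) iterated with $\textsf{DC}_\mathbb{R}$ and then passing to a direct limit, I would produce a single $(\Q,\Lambda,\vec C)\in\F$ above all the $(\P_n,\Sigma_n,\vec B_n)$; membership of the limit in $\F$ follows from coherence of the winning quasi-strategies on cofinal tails. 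The commutativity of the maps $\pi^{(\P_n,\Sigma_n),(\Q,\Lambda)}_{\vec B_n}$ with the $\pi^{(\cdot),\infty}_{(\cdot)}$'s then pulls $\langle\alpha_n\rangle$ back to an $\omega$-descending sequence of ordinals inside $\Q$, contradicting well-foundedness of $\Q$.

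For (2), first note that each $(\Q,\Sigma,\vec B)\in\F$ has $\Q^-$ iterating via $\Sigma$ to a hod premouse that is a point in the directed system of \rthm{Grigor hod computation}, and by \rthm{iterability n} the ``$-$''-parts of $\F$ are cofinal in that system. Restricting $\vec B$-iteration maps to the $\Q^-|\theta_\alpha^\Q$ parts yields a cofinal subsystem of Sargsyan's system, so $\M_\infty|\theta_\alpha=\mathcal{M}^+_\infty(\P,\Sigma)|\theta_\alpha=V^{\textrm{HOD}}_{\theta_\alpha}$. Since $\F$ is $\textrm{OD}$, $\M_\infty\subseteq\textrm{HOD}$. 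For $\eta_0\geq\Theta$, given $\gamma<\Theta$ pick an $OD(\Sigma)$ set $A$ of Wadge rank $\geq\gamma$ (possible as $\Theta=\theta_{\alpha+1}$ is the sup of such Wadge ranks), code it into a $B_A\in\mathbb{B}(\P^-,\Sigma)$, and by \rthm{iterability n} obtain $(\Q,\Sigma,B_A)\in\F$; the image $\pi^{(\Q,\Sigma),\infty}_{B_A}(\tau^\Q_{B_A,\delta_0^\Q})$ is a term at $\eta_0$ whose generic interpretation recovers $A$, so $\gamma<\eta_0$. For the reverse inclusion and $\eta_0\leq\Theta$, apply \rthm{WoodinVopenka} to write $\textrm{HOD}=L[P]$ for some $P\subseteq\Theta$, and following the template of \rlem{keylemma}, reflect the OD predicate ``$\gamma\in P$'' (for $\gamma<\Theta$) via \rthm{fundamental result of ad+} down to a proper initial segment of $K^\Sigma(\mathbb{R})$ sealed by a self-justifying-system of $OD(\Sigma)$ sets; the $\vec B$-guided strategies then compute the predicate inside $\M_\infty|\eta_0$, using the new derived model representation of a $\Sigma$-premouse extension $\M_\infty^+$ of $\M_\infty$. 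This yields $P\cap\Theta\in\M_\infty|\eta_0$ and hence $V^{\textrm{HOD}}_\Theta\subseteq\M_\infty|\eta_0$, which combined with $\M_\infty\subseteq\textrm{HOD}$ forces $\eta_0=\Theta$ and equality $\M_\infty|\eta_0=V^{\textrm{HOD}}_\Theta$.

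The principal obstacle is the reverse inclusion just described, the analogue of \rlem{keylemma} in the $\Theta=\theta_{\alpha+1}$ setting. The technicalities involve setting up a self-justifying-system of $OD(\Sigma)$ sets in $K^\Sigma(\mathbb{R})$ that seals the relevant $\Sigma_1$-gap, producing a countable $\omega$-suitable $\Sigma$-premouse whose new derived model is a proper initial segment of $K^\Sigma(\mathbb{R})$ and whose internal direct limit matches a tail of $\M_\infty$, and then invoking branch condensation and term condensation of the associated $\vec B$-guided strategies to transfer the HOD-predicate $P$ correctly into $\M_\infty$.
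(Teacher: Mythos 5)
Your sketch diverges substantially from the paper's proof, and both halves have genuine gaps.

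For (1), the $\omega$-chain argument does not close: any $\preccurlyeq$-upper bound $(\Q,\Lambda,\vec C)$ of the $(\P_n,\Sigma_n,\vec B_n)$'s must satisfy $\vec B_n\subseteq\vec C$ for every $n$, yet membership in $\F$ requires $\vec C$ to be a \emph{finite} sequence, and in general the $\vec B_n$'s are not all subsequences of a single finite $\vec C$. There is also no ambient iterable object at this stage whose branch condensation would make a countable limit of the system land back in $\F$. The paper instead proves (1) and (2) simultaneously by $\Sigma_1$-reflection to a minimal Suslin-co-Suslin-coded $N$, produces a single ``next mouse'' pair $(\Q,\Lambda)$ via the background $L[E,\Sigma]$-construction inside an appropriate $\N^*_x$ whose iterates dominate $\F^N$, and then (Claim 1 of the paper's proof) uses the sets $D_n\in\mathbb{B}(\P,\Sigma)^N$ and the ordinals $\gamma^{\Q,0}_{D_i}$ to show $\Lambda$ has branch condensation; wellfoundedness of $\M_\infty(\Q,\Lambda)$ and hence of $\M_\infty^N$ follows, giving the contradiction.

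For (2), invoking \rthm{WoodinVopenka} together with $\M_\infty^+$ and a derived-model representation is premature and in fact circular: the premouse $\M_\infty^+$ and the strategy $\Sigma_\infty$ are only \emph{defined} after Lemma \ref{Minftylarge}, and their definitions rely on the conclusions of this lemma (in particular on $\M_\infty$ being wellfounded and on $\eta_0=\Theta$). The intermediate claim ``$P\cap\Theta\in\M_\infty|\eta_0$'' is also problematic: for $\eta_0\leq\Theta$ there is simply no room for an unbounded $P\subseteq\Theta$ inside $\M_\infty|\eta_0$, and the inference to $V^{\textrm{HOD}}_\Theta\subseteq\M_\infty|\eta_0$ is not drawn. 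The paper's argument is local and does not touch the Vopenka representation at all: given a bounded OD set $A\subseteq\alpha<\eta_0$, it picks $D_i$ with $\gamma^{\M_\infty(\Q,\Lambda),0}_{D_i}>\alpha$, encodes membership in $A$ via a set $C\in\mathbb{B}(\P^-,\Sigma)$ built from direct-limit data, and verifies a forcing equivalence over $\M_\infty(\Q,\Lambda)$ showing $A\in\M_\infty(\Q,\Lambda)$. Finally, your sketch is missing the separate argument that $\eta_0=\Theta$: the paper shows that if $\eta_0<\Theta$, then by the Coding Lemma the strategy $\Lambda_0$ of $\Q(0)$ lies in $N$ with some Wadge rank $\gamma<\Theta^N$, then constructs a $B\in\mathbb{B}(\P,\Sigma)^N$ of Wadge rank $>\gamma$ which $\Lambda_0$-iterates must respect, forcing $w(Code(\Lambda_\R))>\gamma$ for iterates $\R$ and contradicting $w(Code(\Lambda_\R))=w(Code(\Lambda_0))$.
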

\begin{proof}
We prove (1) and (2) simultaneously. For a similar argument, see Lemma 3.3.2 in \cite{ATHM}. Toward a contradiction, suppose not. By $\Sigma_1$-reflection (Theorem \ref{fundamental result of ad+}), there is a transitive model $N$ coded by a Suslin, co-Suslin set of reals such that $Code(\Sigma) \in \powerset(\mathbb{R})^N$ and
\begin{eqnarray*}
N &\vDash& \textsf{ZF}^- + \textsf{DC} + S\textsf{MC} + ``\Theta \textrm{ exists and is successor in the Solovay sequence }" + \\ && ``\textrm{(1) and (2) do not both hold}".
\end{eqnarray*}
We take a minimal such $N$ and let $\Omega = \powerset(\mathbb{R})^N$. We get $N \vDash V = K^{\Sigma}(\mathbb{R})$ and a $(\Q,\Lambda)$ with the property that for all $B \in \mathbb{B}(P,\Sigma)^N$, there is a $\Lambda$ iterate $\R$ of $\Q$ that strongly respects $B$. $(\Q, \Lambda)$ also has the property that any $\Lambda$ iterate $\R$ of $\Q$ can be further iterated by $\Lambda_\R$ to $\S$ such that $N$ is the derived model of $\S$.
\\
\indent Fix $\langle \alpha_i \ | \ i<\omega\rangle$ a \textrm{cof}inal in $\Theta^\Omega$ sequence of ordinals. Such a sequence exists since $\Omega = Env((\Sigma^2_1)^N)$. For each $n$, let
\begin{eqnarray*}
D_n = \{(\R,\Psi, x, y) \ &|& \ (\R,\Psi) \textrm{ is a hod pair equivalent to } (\P,\Sigma), \ x \textrm{ codes } \R, \\&& y \in \textrm{ the least } OD^N_\Psi \textrm{ set of reals with Wadge rank } \geq \alpha_n\}
\end{eqnarray*}
Clearly, for all $n$, $D_n \in \mathbb{B}(\P,\Sigma)^N$. Without loss of generality, we may assume $\Lambda$ strongly respects all the $D_n$'s and the derived model of $\Q$ is $N$. Let $\vec{D} = \langle D_n \ | \ n < \omega\rangle$. Before proving the next claim, let us introduce the following notion. First let for a set $A (A \subseteq \mathbb{R} \textrm{ or } A \in \mathbb{B}(\P,\Sigma))$, $\tau_{A,m}^{Q,0}$ be the canonical capturing term for $A$ in $\Q$ at $(\delta_0^{+m})^\Q$. Set
\begin{eqnarray*}
&&\gamma^{\Q,0}_{D_i,m} = sup\{H_1^\Q(P \cup \{\tau^{\Q,0}_{D_i,m}\}) \cap \delta_0\};
\\ && \gamma^{\Q,0}_{D_i} = sup_{m<\omega}\gamma^{\Q,0}_{D_i,m}.
\end{eqnarray*}
\textbf{Claim 1.} \textit{ For any $\Lambda$-iterate $(\S, \Upsilon)$ of $\Q$. Suppose $i: \Q \rightarrow \S$ is the itaration map. Then 
\begin{equation*}
i(\delta_0) = sup_{i<\omega}{\gamma_{D_i}^{\S,0}}.
\end{equation*}}
\begin{proof}
Working in $N$, let $\langle A_i \ | \ i < \omega\rangle$ be a sequence of $OD_\Sigma$ sets such that $A_0$ is a universal $\Sigma^2_1(\Sigma)$ set; $A_1 = \mathbb{R}\backslash A_0$; the $\langle A_i \ | \ i \geq 2\rangle$ is a $\Pi^2_1(\Sigma)$-semiscale on $A_1$. Suppose $\phi_i$ and $s_i \in \textrm{OR}^{<\omega}$ are such that
\begin{equation*}
x \in A_i \ iff \ N \vDash \phi_i[\Sigma, s_i, x] 
\end{equation*}
Now for each $i$, let
\begin{eqnarray*}
A_i^* = \{(\R,\Psi, x, y) \ &|& \ (\R,\Psi) \textrm{ is a hod pair equivalent to } (\P,\Sigma), \ x \textrm{ codes } \R, \\&& N \vDash \phi_i[\Psi, s_i, y]\}
\end{eqnarray*}
Aside from the assumption about $(\Q,\Lambda)$ above ,we also assume $\Lambda$ is guided by $\langle A_i \ | \ i < \omega\rangle$ for stacks above $\P$ and below $\delta_0$. This is possible by relativizing to $\Sigma$ the proof of a similar fact in the case $\Theta = \theta_0$. This means
\begin{equation*}
\delta_0 = sup_{i<\omega}\gamma^{\Q,0}_{A^*_i}.
\end{equation*}
This fact in turns implies
\begin{equation*}
\delta_0 = sup_{i<\omega}\gamma^{\Q,0}_{D_i}.
\end{equation*}
%To see this, let $\gamma = sup_{i<\omega}\gamma^{Q,0}_{D_i}$ and suppose $\gamma < \delta_0$. Let $i$ be such that $\gamma^{Q,0}_{A^*_i} > \gamma$. Fix a $j < \omega$, we can compute $(D_j)_{(P,\Sigma)}$ in $N$ as follows: $(x,y) \in (D_j)_{(P,\Sigma)}$ if and only if whenever $(\mathcal{T},b)$ is such that
%\begin{enumerate}
%\item $\mathcal{T}$ is a non-dropping tree on $Q$ which is above $P$;
%\item for every limit $\alpha < lh(\mathcal{T})$, if $c$ is the branch of $\mathcal{T}|\alpha$, then $\mathcal{Q}(\mathcal{T}|\alpha, c)$ exists and is $\omega_1$ iterable;
%\item $b$ is a wellfounded branch of $\mathcal{T}$ such that either $\mathcal{Q}(b,\mathcal{T}) = \mathcal{Q}(\mathcal{M}(\mathcal{T}))$ exists and is $\omega_1$-iterable or else letting $\mathcal{R} = \mathcal{M}^{\mathcal{T}}_b$, then $\mathcal{R}$ is full with respect to $\Sigma$-mice and $i^{\mathcal{T}}_b(\tau^{Q}_{A_i^*,m}) = \tau^{R}_{A_i^*,m}$ for every $m$.
%\item $(x,y)$ is generic over $R$ for the extender algebra at $i^{\mathcal{T}}_b(\delta_0)$
%\end{enumerate}
%then $(x,y) \in i^{\mathcal{T}}_b(\tau^Q_{D_j})$.
%\\
%\indent The equivalence above computes $(D_n)_{(P,\Sigma)}$, all $n$, from $Q, Mice_\Sigma, A_i$ in $N$, which is a contradiction.
To see this, fix an $A^*_i$. We'll show that there is a $j$ such that $\gamma^{\Q,0}_{D_j} \ge \gamma^{\Q,0}_{A^*_i}$. Well, fix a real coding $\P$ and let $j$ be such that 
\begin{equation*}
w(A_i)=w((A^*_i)_{(\P,\Sigma,x)}) \le w((D_j)_{(\P,\Sigma,x)}).
\end{equation*}
Let $z$ be a real witnessing the reduction. Then there is a map $i: \Q \rightarrow \R$ such that
\begin{enumerate}
\item $i$ is according to $\Lambda$ and the iteration is above $\Q^- = \P$;
\item $z$ is generic for the extender algebra $\mathbb{A}$  of $\R$ at $\delta^\R$.
\end{enumerate}
Note that $i(\tau^\Q_{A^*_i}) = \tau^R_{A^*_i}$, $i(\tau^\Q_{D_j}) = \tau^\R_{D_j}$, and $\R[z] \vDash \tau_{A^*_i} \le_w \tau_{D_j}$ via $z$. Hence $\tau^\R_{A^*_i} \in X = \{\tau \in \R^{\mathbb{A}} \ | \ (\exists p\in\mathbb{A})(p \Vdash_R \tau \le_w \tau_{D_j} \textrm{ via } \dot{z})\}$ and $|X|^\R < \delta^\R$ (by the fact that the extender algebra $\mathbb{A}$ is $\delta^\R$-cc). But $X$ is definable over $\R$ from $\tau^R_{D_j}$, hence $|X|^\R < \gamma^{R,0}_{D_j}$. Since $\tau^R_{A^*_i}\in X$, $\gamma^{\R,0}_{A^*_i} \le \gamma^{\R,0}_{D_j}$ which in turns implies  $\gamma^{\Q,0}_{A^*_i} \le \gamma^{\Q,0}_{D_j}$.
\\
\indent Now to finish the claim, let $(\S, \Upsilon)$ be a $\Lambda$ iterate of $\Q$. Suppose $i: \Q \rightarrow \S$ is the iteration map. Let $\R = i(\P)$ and $\Sigma_\Q$ be the tail of $\Sigma$ under the iteration. We claim that
\begin{equation*}
i(\delta_0) = sup_{i<\omega}{\gamma_{D_i}^{S,0}}. (\textasteriskcentered)
\end{equation*}
This is easily seen to finish the proof of Claim 1. To see (\textasteriskcentered), we repeat the proof of the previous part applied to $(\S,\Upsilon)$ and $\langle B_i \ | \ i<\omega\rangle$ where $B_0$ is a universal $\Sigma^2_1(\Sigma_\Q)$; $B_1 = \mathbb{R}\backslash B_0$; $\langle B_i \ | \ i\geq 2\rangle$ is a $\Pi^2_1(\Sigma_\Q)$-semiscale on $B_1$. We may assume $(\S,\Upsilon)$ is guided by $\langle B_i \ | \ i<\omega\rangle$ for stacks above $R$ and below $i(\delta_0)$. Now we are in the position to apply the exact same argument as above and conclude that (\textasteriskcentered) holds. Hence we're done.
\end{proof}	
The proof of claim 1 shows $\Lambda$ has branch condensation, hence the direct limit $\mathcal{M}_\infty(\Q,\Lambda)$ is defined and is wellfounded. This implies that in $N$, $\mathcal{M}_\infty$ is wellfounded. Let $\langle\delta_i \ | \ i<\omega\rangle$ be the first $\omega$ Woodins of $\Q$ above $\Q^-$ and $i^{\Q,\Lambda}_{\Q,\infty}: \Q \rightarrow \mathcal{M}_\infty(\Q,\Lambda)$ be the iteration embedding according to $\Lambda$ and $\langle\eta_n \ | \ n < \omega\rangle$ = $\langle i^{\Q,\Lambda}_{\Q,\infty}(\delta_i) \ | \ i<\omega\rangle$. For $(\R,\Lambda_\R)$ and iterate of $(\Q,\Lambda)$, let $i^{\R,\Lambda_\R}_{\R,\infty}$ have the obvious meaning and $i^{\Q,\Lambda}_{\Q,\R}$ be the iteration map according to $\Lambda$. Note that in $N$, $\M_\infty(\Q,\Lambda)|\eta_n = \M_\infty|\eta_n$ for all $n$.
\\
\\
\textbf{Claim 2.} \textit{ $\eta_0 = \Theta^\Omega$.}
\begin{proof}
Working in $N$, we first claim that 
\begin{equation*}
\mathcal{M}_\infty(\Q,\Lambda)|\eta_0 = V^{\textrm{HOD}}_{\eta_0}. \ \ \ (\textasteriskcentered)
\end{equation*}
To show (\textasteriskcentered), it is enough to show that if $A \subseteq \alpha < \eta_0$ and $A$ is $OD$ then $A \in \mathcal{M}_\infty(\Q,\Lambda)$. To see this, let $i$ be such that $\gamma^{\mathcal{M}_\infty(\Q,\Lambda),0}_{D_i} > \alpha$ (such an $i$ exists by the proof of Claim 1). Let
\begin{eqnarray*}
C = \{(\R,\Psi, x, y) \ &|& \ (\R,\Psi) \textrm{ is a hod pair equivalent to } (\P,\Sigma), \ x \textrm{ codes } \R,\ y \textrm{ codes } (N,\gamma) \\ &&\textrm{ such that } (\N,\Psi) \textrm{ is 1-suitable, } \N \textrm{ is strongly } D_i \textrm{ iterable via a } \\ && \textrm{ quasi-strategy }  \Phi  \textrm{ extending } \Psi,\ \gamma < \gamma^{\mathcal{N},0}_{D_i},\ \pi^{(\N,\Psi),\infty}_{D_i}(\gamma) \in A\}.
\end{eqnarray*}
By replacing $\Q$ by an iterate we may assume $(\Q,\Lambda)$ is $C$-iterable. Let $\tau^{\Q}_C = \tau^{\Q}_{C,(\delta_0^{+\omega})^\Q}$ and $\tau_{C} = i^{(\Q,\Lambda)}_{\Q,\infty}(\tau^{\Q}_C)$. The following equivalence is easily shown by a standard computation:
\begin{eqnarray*}
\xi \in A \ iff \ \mathcal{M}_\infty(\Q,\Lambda) \vDash \Vdash_{Col(\omega,\eta_0^{+\omega})} && ``\textrm{if } x \textrm{ codes } i^{\Q,\Lambda}_{\Q,\infty}(\P), y \textrm{ codes } (\mathcal{M}_\infty(\Q,\Lambda)|\eta_0^{+\omega},\xi) \\ && \textrm{ then } (x,y) \in \tau_C".
\end{eqnarray*}
For the reader's convenience, we'll show why the above equivalence holds. First suppose $\xi \in A$. Let $(\S,\Xi) \in I(\Q,\Lambda)$ be such that there is a $\gamma < \gamma_{D_i}^{\S,0}$ and $i^{\S,\Xi}_{\S,\infty}(\gamma) = \xi$. Then we have (letting $\nu = i^{\Q,\Lambda}_{\Q,S}(\delta_0)$)
\begin{eqnarray*}
\S \vDash \Vdash_{Col(\omega,\nu^{+\omega})} ``\textrm{if } x \textrm{ codes } i^{\Q,\Lambda}_{\Q,\S}(\P), y \textrm{ codes } (S|\nu^{+\omega},\gamma)\textrm{ then } (x,y) \in i^{\Q,\Lambda}_{\Q,\S}(\tau^{\Q,\Lambda}_C)".
\end{eqnarray*}
By applying $i^{\S,\Xi}_{\S,\infty}$ to this ,we get
\begin{eqnarray*}
\mathcal{M}_\infty(\Q,\Lambda) \vDash \Vdash_{Col(\omega,\eta_0^{+\omega})} ``\textrm{if } x \textrm{ codes } i^{\Q,\Lambda}_{\Q,\infty}(\P), y \textrm{ codes } (\mathcal{M}_\infty(\Q,\Lambda)|\eta_0^{+\omega},\xi) \textrm{ then } (x,y) \in \tau_C".
\end{eqnarray*}
Now to show $(\Leftarrow)$, let $(\S,\Xi) \in I(\Q,\Lambda)$ be such that for some $\gamma < \gamma_{D_i}^{\S,0}$, $\xi = i^{\S,\Xi}_{\S,\infty}(\gamma)$. Let $\nu = i^{\Q,\Lambda}_{\Q,\S}(\delta_0)$, we have
\begin{eqnarray*}
\S \vDash \Vdash_{Col(\omega,\nu^{+\omega})} ``\textrm{if } x \textrm{ codes } i^{\Q,\Lambda}_{\Q,\S}(\P), y \textrm{ codes } (\S|\nu^{+\omega},\gamma)\textrm{ then } (x,y) \in i^{\Q,\Lambda}_{\Q,\S}(\tau^{\Q,\Lambda}_C)".
\end{eqnarray*}
This means there is a quasi-strategy $\Psi$ on $\S(0)$ ($\S(0) = \S|({\nu^{+\omega}})^\S$) such that $(\S(0),i^{\Q,\Lambda}_{\Q,\S}(\Sigma))$ is $1$-suitable, $\Psi$ extends $i^{\Q,\Lambda}_{\Q,\S}(\Sigma))$, and $\Psi$ is $D_i$-iterable. We need to see that $\pi^{(\S(0),i^{\Q,\Lambda}_{\Q,\S}(\Sigma)),\infty}_{D_i}(\gamma) = \xi$. But this is true by the choice of $D_i$, $\xi = i^{\S,\Xi}_{S,\infty}(\gamma)$, and the fact that $\Psi$ agrees with $\Xi$ on how ordinals below $\gamma^{\S,0}_{D_i}$ are mapped.
\\
\indent The equivalence above shows $A \in \mathcal{M}_\infty(Q,\Lambda)$, hence completes the proof of (\textasteriskcentered). (\textasteriskcentered) in turns shows that $\eta_0$ is a cardinal in $\textrm{HOD}$ and $\eta_0 \leq \Theta$ (otherwise, $\textrm{HOD}|\eta_0 = \mathcal{M}_\infty(Q,\Lambda)|\eta_0 \vDash \Theta$ is not Woodin while $\textrm{HOD} \vDash \Theta$ is Woodin).
\\
\indent Next, we show expectedly that
\begin{equation*}
\eta_0 = \Theta. \ \ \ (\textasteriskcentered\textasteriskcentered)
\end{equation*}
Suppose toward a contradiction that $\eta_0 < \Theta$. Let $\Q(0) = \Q|(\delta_0^{+\omega})^\Q$, $\Lambda_0 = \Lambda|\Q(0)$, and $\mathcal{M}_\infty(\Q,\Lambda)(0) = \mathcal{M}_\infty(\Q,\Lambda)|(\eta_0^{+\omega})^{\mathcal{M}_\infty(\Q,\Lambda)}$. Let $\pi = i\rest\Q(0)$; so $\pi$ is according to $\Lambda_0$. By the Coding Lemma and our assumption that $\eta_0 < \Theta$, $\pi, \mathcal{M}_\infty(Q,\Lambda)(0) \in N$. From this, we can show $\Lambda_0 \in N$ by the following computation: $\Lambda_0(\vec{\mathcal{T}}) = b$ if and only if
\begin{enumerate}
\item the part of $\vec{\mathcal{T}}$ based on $P$ is according to $\Sigma$;
\item if $i^{\vec{\mathcal{T}}}_b$ exists then there is a $\sigma: \mathcal{M}^{\vec{\mathcal{T}}}_b \rightarrow\mathcal{M}_\infty(Q,\Lambda)(0)$ such that $\pi = \sigma\circ i^{\vec{\mathcal{T}}}_b$;
\item $\vec{\mathcal{T}}^{\smallfrown}\mathcal{M}^{\vec{\mathcal{T}}}_b$ is $Q$-structure guided.  
\end{enumerate}
By branch condensation of $\Lambda_0$, (1),(2), and (3) indeed define $\Lambda_0$ in $N$. This means $\Lambda_0$ is $OD^N$ from $\Sigma$ (and some real $x$); hence $\Lambda_0 \in N$. So suppose $\gamma = w(Code(\Lambda_0)) < \Theta^\Omega$. In N, let
\begin{eqnarray*}
B = \{(\R,\Psi, x, y) \ &|& \ (\R,\Psi) \textrm{ is a hod pair equivalent to } (\P,\Sigma), \ x \textrm{ codes } \R,\ y \in A_\R \\ &&\textrm{ where } A_\R \textrm{ is the least } OD(Code(\Psi)) \textrm{ set such that } w(A_\R) > \gamma\}
\end{eqnarray*}
Then $B \in \mathbb{B}(\P,\Sigma)^N$. We may assume $\Lambda_0$ respects $B$. It is then easy to see that whenever $(\R,\Lambda_\R) \in I(Q(0),\Lambda_0)$ (also let $\S \triangleleft \R$ be the iterate of $\P$), $w(Code(\Lambda_\R)) \geq w(A_\R)$ because $\Lambda_\R$ can compute membership of $A_\R$ by performing genericity iterations (above $\S$) to make reals generic. This means $w(Code(\Lambda_\R)) > \gamma = w(Code(\Lambda_0))$. This contradicts the fact that $w(Code(\Lambda_\R)) = w(Code(\Lambda_0))$.
\end{proof}
The proof of Claim 1 and Claim 2 shows that the fragment of $\Lambda$ on stacks above $P$ and below $\delta_0$ is guided by $\langle D_i \ | \ i<\omega\rangle$.
\end{proof}
\indent Now we define a strategy $\Sigma_\infty$ for $\mathcal{M}_\infty$ extending the strategy $\Sigma_\infty^-$ of $\mathcal{M}_\infty^- = V^{\textrm{HOD}}_{\theta_\alpha}$. Let $(\P,\Sigma,A) \in \mathcal{F}$ and suppose $\P$ is $n$-suitable with $\langle\delta_i \ | \ i<n\rangle$ being the sequence of Woodins of $\P$ above $\P^-$, let $\tau^{\mathcal{M}_\infty}_{A,k}$ = common value of $\pi^{\P,\Sigma}_{\vec{B},\infty}(\tau^\P_{A,\delta_k})$. $\Sigma_\infty$ will be defined (in V) for trees on $\mathcal{M}_\infty|\eta_0$ in $\mathcal{M}_\infty$. For $k \geq n$, $\mathcal{M}_\infty \vDash "Col(\omega,\eta_n) \times Col(\omega,\eta_k) \Vdash (\tau^{\mathcal{M}_\infty}_{A,n})_g =  (\tau^{M_\infty}_{A,k})_h \cap \mathcal{M}_\infty[g]$" where $g$ is $Col(\omega,\eta_n)$ generic and h is $Col(\omega,\eta_k)$ generic and $(\tau^{\mathcal{M}_\infty}_{A,n})_g$ is understood to be $A_{(\mathcal{M}_\infty^-,\Sigma_\infty^-)}\cap \mathcal{M}_\infty[g]$. This is just saying that the terms cohere with one another. 
\\
\indent Let $\lambda^{\mathcal{M}_\infty} = sup_{i<\omega}\eta_i$. Let $G$ be $Col(\omega, \lambda^{\mathcal{M}_\infty})$ generic over $\mathcal{M}_\infty$. Then $\mathbb{R}^*_G$ is the symmetric reals and $A^*_G$ := $\cup_k(\tau^{\mathcal{M}_\infty}_{A,k})_{G|\eta_k}$. 
\begin{proposition}
For all $A\in \mathbb{B}(\mathcal{M}_\infty^-,\Sigma_\infty^-)$, $L(A^*_G, \mathbb{R}^*_G) \vDash \textsf{AD}^+$
\end{proposition}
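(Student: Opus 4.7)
The plan is to follow the template of Proposition \ref{ASatAD+} (the $\Theta=\theta_0$ case) while working inside $K^\Sigma(\mathbb{R})$ over the hod pair $(\P,\Sigma)$ furnished by Theorem \ref{Grigor hod computation}.

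First, suppose for contradiction that there is $A\in \mathbb{B}(\M_\infty^-,\Sigma_\infty^-)$ with $L(A^*_G,\mathbb{R}^*_G)\not\vDash \textsf{AD}^+$. By $\Sigma_1$-reflection (Theorem \ref{fundamental result of ad+}), I obtain a transitive model $N$ coded by a Suslin-co-Suslin set of reals, with $Code(\Sigma)\in \powerset(\mathbb{R})^N$, satisfying
$$N \vDash \textsf{ZF}^- + \textsf{AD}^+ + \textsf{SMC} + V = K^\Sigma(\mathbb{R}) + ``\Theta = \theta_{\alpha+1}" + \phi,$$
where $\phi$ expresses the existence of such an $A$ with respect to $N$'s internal versions of $\M_\infty^-,\Sigma_\infty^-$. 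Take $N$ minimal and let $\Omega=\powerset(\mathbb{R})^N$. Fix inside $N$ a witness $A$ of least Wadge rank.

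Second, mirroring the proof of Theorem \ref{iterability n}, I pick a good pointclass $\Gamma$ well beyond $\Omega$ and (via Theorem \ref{n*x}) a real $z$ and a tuple $(N^*_z,\delta_z,\Sigma_z)$ that Suslin-captures $Code(\Sigma)$, a universal $\Omega$-set, and the first-order theory of $N$. The usual reflection argument (as in Claim 1 of Theorem \ref{iterability n}) yields club-many $\eta<\delta_z$ such that $Lp^{\Omega,\Sigma}(N^*_z|\eta)\vDash``\eta$ is Woodin". A full background $L[\vec{E},\Sigma]$-construction over $\P$ inside $N^*_z$ then reaches a sound $\Sigma$-premouse $\Q$ with $\omega$ Woodin cardinals above $o(\P)$, projecting across their supremum; let $\Lambda$ be the strategy of $\Q$ induced by $\Sigma_z$, which is $\Omega$-fullness preserving.

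Third, I carry out the $\omega\times\omega$ diagram chase of Theorem \ref{iterability n} for the single set $A$ to obtain a $\Lambda$-iterate $\R$ of $\Q$ with strong $A$-condensation, and then further iterate using the $\Sigma$-relativized Prikry forcing of Section 1.3 so that $N$ becomes the new derived model of $\R$ at $\lambda^\R:=\sup_i i(\delta_i^\Q)$. The latter step goes through as in Proposition \ref{ASatAD+}: if $\R^*$ is the transitive collapse of the pointwise definable hull of an appropriate level of $\R$, one realizes $N$ as a derived model of a $\Lambda$-iterate of $\R^*$ extending a Prikry generic, and identifies $(\M_\infty)^N$ as a $\Lambda_\R$-iterate of $\R^*$ by Lemmas 7.6, 7.7, and 6.51 of \cite{steel2012hod} relativized to $\Sigma$.

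Finally, since suitable initial segments of $\R$ are cofinal in the direct limit system $\mathcal{F}^N$ defining $(\M_\infty)^N$, $(\M_\infty)^N$ is a $\Lambda_\R$-iterate of $\R$, and the iteration embedding sends the canonical term $\tau^\R_{A,k}$ to $\tau^{(\M_\infty)^N}_{A,k}$ for each $k$. By elementarity, $N$ thinks that the derived-model interpretation $L((\tau^{(\M_\infty)^N}_A)^*_G,\mathbb{R}^*_G)$ satisfies $\textsf{AD}^+$, contradicting the choice of $A$.

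The main obstacle is step three: simultaneously achieving strong $A$-condensation on $\R$ and the realization of $N$ as the derived model of $\R$. The diagram chase of Theorem \ref{iterability n} handles the first requirement, and the Prikry-iteration argument of Proposition \ref{ASatAD+} handles the second; the delicate point is interleaving the two constructions so that neither destroys the property secured by the other, which is accomplished by alternating them stage by stage and taking a direct limit at $\omega$, verifying at each stage that the terms for $A$ supplied by $N^*_z$ agree with those produced by the diagram chase.
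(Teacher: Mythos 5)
Your proposal follows essentially the same route the paper takes: reflect via $\Sigma_1$-reflection to a minimal Suslin-co-Suslin-coded $N$ with $Code(\Sigma)\in\powerset(\mathbb{R})^N$, obtain a "next $\Sigma$-mouse" $\Q$ with $\omega$ Woodins over $\P$ from a background construction in a capturing $N^*_z$, iterate it via the relativized Prikry/genericity arguments to realize $N$ as a derived model while also arranging preimages of the terms $\tau^{\M_\infty}_{A,k}$, and conclude by elementarity. The one place you deviate in spirit is the concluding paragraph, where you treat the combination of strong $A$-condensation with the derived-model realization as a delicate interleaving problem requiring an alternating construction with a direct limit at $\omega$: in fact there is no tension to manage, since (as the remark after Theorem \ref{iterability n} records) strong $A$-condensation, once secured on an iterate $\R$, is stable under all further $\Lambda_\R$-iterations — in particular under the genericity iterations above $o(\P)$ used in the Prikry step — so the two steps can simply be performed sequentially, exactly as the paper does. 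Your version is therefore correct but slightly overbuilt; the sequential argument suffices and is what the paper intends.
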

\begin{proof}
We briefly sketch the proof of this since the techniques involved have been fully spelled out in Section 2. If not, reflect the situation down to a model $N$ coded by a Suslin co-Suslin set. Next get a ``next mouse" $\mathcal{N}$ with $\omega$ Woodin cardinals that iterates out to (possibly a longer mouse than) $\mathcal{M}_\infty^N$. We can and do assume that an iterate of $\mathcal{N}$ has derived model is $K^{\Sigma}(\mathbb{R})$ of $N$, where $(\P,\Sigma)$ is a hod pair giving us $\textrm{HOD}|\theta_\alpha$; the proof of this fact uses a relative-to-$\Sigma$ Prikry forcing (see \cite{steel08}) and S-constructions (see \cite{ATHM}). From now on, we work inside the reflected universe $N$.
\\
\indent Let $A \subseteq \mathbb{B}(\mathcal{M}_\infty^-,\Sigma_\infty^-)$ be the least $OD$ set such that $L(A^*_G, \mathbb{R}^*_G) \nvDash \textsf{AD}^+$. Then there is an iterate $\mathcal{M}$ of $\mathcal{N}$ having preimages of all the terms $\tau^{M_\infty}_{A,k}$. We may assume $\mathcal{M}$ has derived model $K^{\Sigma}(\mathbb{R})$. Since we have $\textsf{AD}^+$, $\mathcal{M}$ thinks that its derived model (in this case is $K^{\Sigma}(\mathbb{R})$) satisfies that $L(A_{(\P,\Sigma)},\mathbb{R}) \vDash \textsf{AD}^+$, where we reuse $(\P,\Sigma)$ for an equivalent (but possibly different) hod pair from the original one. Now iterate $\mathcal{M}$ to $\mathcal{Q}$ such that $M_\infty$ is a proper initial segment of $\mathcal{Q}$. By elementarity $L(A^*_G,\mathbb{R}^*_G) \vDash \textsf{AD}^+$. This is a contradiction.
\end{proof}
\begin{definition}
\label{Sigma infinity n}
Given a normal tree $\mathcal{T} \in \mathcal{M}_\infty$ and $\mathcal{T}$ is based on $\mathcal{M}_\infty|\theta_0$. $\mathcal{T}$ is by $\Sigma_\infty$ if the following hold (the definition is similar for finite stacks):
\begin{itemize}
\item If $\mathcal{T}$ is short then $\Sigma$ picks the branch guided by $Q$-structure (as computed in $\M_\infty)$.
\item If $\mathcal{T}$ is maximal then $\Sigma_\infty(\mathcal{T})$ = the unique \textrm{cof}inal branch $b$ which moves $\tau^{\mathcal{M}_\infty}_{A,0}$ correctly for all $A \in OD$ such that there is some $(\P,\Sigma,A)\in\mathcal{F}$ i.e. for each such $A$, $i_b(\tau^{\M_\infty}_{A,0}) = \tau^{\M^{\mathcal{T}}_b}_{A^*,0}$.
\end{itemize}
\end{definition}
\begin{lemma} Given any such $\mathcal{T}$ as above, $\Sigma_\infty(\mathcal{T})$ exists.
\end{lemma}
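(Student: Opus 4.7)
The plan is to mirror the proof of Lemma~\ref{SigmaInftyTExists} from Section 2, replacing the self-justifying-system machinery by the hod pair machinery developed in the proof of Theorem~\ref{iterability n} and Lemma~\ref{Minftylarge}. Suppose toward a contradiction that $\Sigma_\infty(\mathcal{T})$ fails to exist for some normal tree $\mathcal{T} \in \M_\infty$ based on $\M_\infty|\eta_0$. Since $\Sigma_\infty$ is defined by the requirement that the chosen branch $b$ satisfy $i_b(\tau^{\M_\infty}_{A,0}) = \tau^{\M^\mathcal{T}_b}_{A^*,0}$ for every $A$ occurring in some $(\P,\Sigma,A) \in \mathcal{F}$, failure means either no such $b$ exists or two incompatible branches do. Using $\Sigma_1$-reflection (Theorem~\ref{fundamental result of ad+}) we obtain a minimal transitive $N$ coded by a Suslin co-Suslin set with $\mathrm{Code}(\Sigma) \in \powerset(\mathbb{R})^N$ satisfying the appropriate ``$\textsf{ZF}^- + \textsf{DC} + \textsf{SMC} + \Theta = \theta_{\alpha+1}$ + failure of the lemma'' statement; by the proof of Theorem~\ref{main theorem}, $N \models V = K^\Sigma(\mathbb{R})$.

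Working in $N$, I would run the construction from Theorem~\ref{iterability n} to produce a hod pair $(\Q, \Lambda)$ with $\Q$ having $\omega$ Woodin cardinals above $\P$, with $\Lambda$ extending $\Sigma$, $\Omega$-fullness preserving and having branch condensation, and such that for every $B \in \mathbb{B}(\P^-,\Sigma)^N$ there is a $\Lambda$-iterate $\R_B$ of $\Q$ which is strongly $B$-iterable. Moreover, by the relativized Prikry arguments of \cite{steel08} together with the $S$-constructions of \cite{ATHM}, one may arrange that the new derived model of $\Q$ at the sup of its Woodin cardinals above $\P$ realizes $N$. The same direct-limit computation as in Claim~1 and Claim~2 of Lemma~\ref{Minftylarge}, applied inside $N$, then identifies $\M_\infty^N$ as the direct limit $\M_\infty(\Q,\Lambda)$ of $\Lambda$-iterates of $\Q$, and shows that $\Lambda$ is guided on stacks below its first Woodin by a cofinal sequence of sets $\langle D_i : i < \omega\rangle$ from $\mathbb{B}(\P^-,\Sigma)^N$.

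Given $\mathcal{T}$ as in the hypothesis, I would then copy $\mathcal{T}$ via the inverse of the direct-limit map onto a tree $\mathcal{T}^*$ on an appropriate $\Lambda$-iterate of $\Q$, apply $\Lambda$ to obtain a cofinal (or Q-structure-guided, if $\mathcal{T}$ is short) branch $b^*$, and push $b^*$ back to a branch $b$ of $\mathcal{T}$. Strong $B$-iterability of $(\Q,\Lambda)$ for every relevant $B$ guarantees that $b$ moves each $\tau^{\M_\infty^N}_{A,0}$ correctly, while branch condensation and fullness preservation of $\Lambda$ guarantee uniqueness. Hence $\Sigma_\infty^N(\mathcal{T})$ exists in $N$, contradicting the choice of $N$.

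The main obstacle is packaging the strong $B$-iterability uniformly: the construction of Theorem~\ref{iterability n} produces an iterate that strongly respects a single $B$, but to define $\Sigma_\infty^N(\mathcal{T})$ one must simultaneously move every $\tau^{\M_\infty}_{B,0}$ correctly along $b$. This is handled, as in the proof of Theorem~\ref{iterability n} and the discussion of $\vec{D}$ in Lemma~\ref{Minftylarge}, by working with the cofinal sequence $\langle D_i : i<\omega\rangle$ and noting that since every $B \in \mathbb{B}(\P^-,\Sigma)^N$ is Wadge-below some $D_i$ and the iteration strategy $\Lambda$ is guided by the $D_i$'s, the branch chosen by $\Lambda$ automatically moves every such $B$-term correctly. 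This yields the existence and uniqueness of $\Sigma_\infty^N(\mathcal{T})$ in $N$ and completes the reflection argument.
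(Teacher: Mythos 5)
Your proposal matches the paper's proof in structure and substance: reflect the failure to a minimal Suslin-co-Suslin $N$ with $N\vDash V=K^\Sigma(\mathbb{R})$, build (as in Theorem~\ref{iterability n} / the preceding proposition) a next mouse with $\omega$ Woodins whose strategy $\Lambda$ extends $\Sigma$, iterates out to $\M_\infty^N$, and for each $B\in\mathbb{B}(\P^-,\Sigma)^N$ admits a $\Lambda$-iterate strongly respecting $B$, then read off the desired branch from $\Lambda$ in $N$. The paper states the contradiction tersely; your extra discussion of uniformity via the cofinal $\vec{D}$-guidance from Lemma~\ref{Minftylarge} is exactly what the paper's ``this easily gives a contradiction'' is compressing, so this is the same argument with more of the details spelled out.
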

\begin{proof} Suppose not. Again reflect the failure to a model $N$ coded by a Suslin co-Suslin set. We may assume $N \vDash V = K^{\Sigma}(\mathbb{R})$ where $(\P,\Sigma)$ is a hod pair giving us $\textrm{HOD}|\theta_\alpha$. Just as in the previous proposition, we then get a next mouse $\mathcal{N}$ that iterates out to (possibly a longer mouse than) $\mathcal{M}^N_\infty$. This mouse $\mathcal{N}$ has strategy $\Lambda$ with property that for all $A \in \mathbb{B}(P,\Sigma)$, there is a $\Lambda$-iterate $(\mathcal{M}, \Lambda_{\mathcal{M}})$ of $\mathcal{N}$ such that $\Lambda_{\mathcal{M}}$ strongly respects $A$. This easily gives us a contradiction.  
\end{proof}
\indent It is evident that $L(\mathcal{M}_\infty,\Sigma_\infty) \subseteq \textrm{HOD}$. Next, we show $\M_\infty$ and $\Sigma_\infty$ capture all unbounded subsets of $\Theta$ in $\textrm{HOD}$. In $L(\M_\infty, \Sigma_\infty)$, first construct (using $\Sigma_\infty$) a mouse $\M_\infty^+$ extending $\M_\infty$ such that o($\M_\infty$) is the largest cardinal of $\mathcal{M}_\infty^+$ as follows: 
\begin{enumerate}
\item Let $\mathbb{R}^*_G$ be the symmetric reals obtained from a generic $G$ over $\mathcal{M}_\infty$ of Col($\omega, <\lambda^{\mathcal{M}_\infty}$).
\item For each $A^*_G$ (defined as above) (we know $L(\mathbb{R}^*_G,A^*_G) \vDash \textsf{AD}^+$), pull back the hybrid mice over $\mathbb{R}^*_G$ in this model to hybrid mice $\S$ extending $\mathcal{M}_\infty$ with $D^+(S, \lambda^{M_\infty}) = L(\mathbb{R}^*_G, A^*_G)$.
\item Let $\mathcal{M}_\infty^+ = \cup_\S \S$ for all such $\S$ above. $\mathcal{M}_\infty^+$ is independent of $G$. By a reflection argument (and Prikry-like forcing) as above, the translated mice over $\mathcal{M}_\infty$ are all compatible, no levels of $\mathcal{M}_\infty^+$ projects across $o(\mathcal{M}_\infty)$, and $\M^+_\infty$ contains as its initial segments all translation of $\mathbb{R}^*_G$-mice in $D^+(\mathcal{M}_\infty^+, \lambda^{\mathcal{M}_\infty})$. This is just saying that $\mathcal{M}_\infty^+$ contains enough mice to compute $\textrm{HOD}$.  
\end{enumerate}
\begin{remark} \rm{$\Theta$ is not collapsed by $\Sigma_\infty$ as it is a cardinal in \textrm{HOD}. $\Sigma_\infty$ is used to obtain the $A^*_G$ above by moving correctly the $\tau^{M_\infty}_{A,0}$ in genericity iterations. $L(\mathcal{M}_\infty)$ does not see the sequence $\langle \tau^{\mathcal{M}_\infty}_{A,k} | k \in \omega \rangle$ hence can't construct $A^*_G$. Also since $\Sigma_\infty$ collapses $\delta^{M_\infty}_1, \delta^{\mathcal{M}_\infty}_2...$, it doesn't make sense to talk about $D(L(\mathcal{M}_\infty,\Sigma_\infty))$}.
\end{remark}
\begin{lemma} 
\label{keylemma1}
\textrm{HOD} $\subseteq L(\mathcal{M}_\infty, \Sigma_\infty)$
\end{lemma}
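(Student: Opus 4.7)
The plan is to mimic the proof of Lemma \ref{keylemma} from the $\theta_0$ case, with the self-justifying-system of $OD$ sets replaced by a cofinal sequence $\vec{D}\subseteq\mathbb{B}(\P,\Sigma)$ of the kind constructed in the proof of Lemma \ref{Minftylarge}. By Theorem \ref{WoodinVopenka}, $\textrm{HOD}=L[P]$ for some $P\subseteq\Theta$, so it suffices to show $P\in L(\M_\infty,\Sigma_\infty)$. Fix a formula $\phi$ (suppressing ordinal parameters) such that $\alpha\in P\iff K^\Sigma(\mathbb{R})\vDash\phi[\alpha]$; here we use that $V=K^\Sigma(\mathbb{R})$ by Theorem \ref{main theorem}. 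Working inside $L(\M_\infty,\Sigma_\infty)$, using $\Sigma_\infty$ and $\M_\infty^+$, form the direct limit map $\pi:\M_\infty\to(\M_\infty)^{D^+(\M_\infty^+,\lambda^{\M_\infty})}$ (the iterates of $\M_\infty$ via $\Sigma_\infty$ are cofinal in the corresponding directed system inside $D^+(\M_\infty^+,\lambda^{\M_\infty})$ by boolean comparisons). The key equivalence to establish is
\begin{equation*}
K^\Sigma(\mathbb{R})\vDash\phi[\alpha]\iff D^+(\M_\infty^+,\lambda^{\M_\infty})\vDash\phi[\pi(\alpha)].
\end{equation*}
Once this is proved, the right-hand side can be evaluated inside $L(\M_\infty,\Sigma_\infty)$, so $P\in L(\M_\infty,\Sigma_\infty)$ and hence $\textrm{HOD}\subseteq L(\M_\infty,\Sigma_\infty)$.

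To prove the equivalence, assume toward a contradiction that it fails at some $\alpha$, and apply $\Sigma_1$-reflection (Theorem \ref{fundamental result of ad+}) to obtain a minimal transitive model $N$ coded by a Suslin co-Suslin set of reals, with $\textrm{Code}(\Sigma)\in\powerset(\mathbb{R})^N$, in which the failure is witnessed and which satisfies $\textsf{AD}^++\textsf{SMC}+V=K^\Sigma(\mathbb{R})+\Theta=\theta_{\alpha+1}$. Now, exactly as in the construction used in Theorem \ref{iterability n} and Lemma \ref{Minftylarge}, we produce a sound hybrid premouse $\N$ with $\omega$ Woodin cardinals above $o(\P)$ together with an iteration strategy $\Lambda$ which is $\Omega$-fullness preserving (where $\Omega=\powerset(\mathbb{R})^N$) and has branch condensation, and which is guided below $\delta_0^\N$ by a cofinal family $\vec{D}=\langle D_n\mid n<\omega\rangle\subseteq\mathbb{B}(\P,\Sigma)^N$ (the $D_n$'s play the role of a sjs; cofinality is arranged by choosing each $D_n$ to code the least $OD_\Sigma$ set of Wadge rank $\geq\alpha_n$ for a cofinal sequence $\langle\alpha_n\rangle$ in $\Theta^\Omega$). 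Using the relative-to-$\Sigma$ Prikry forcing and the $S$-constructions from Section 1.3, an appropriate $\Lambda$-iterate of $\N$ realizes $N$ as its new derived model at the sup of its Woodins above $o(\P)$, so in particular $\M_\infty^N$ is a $\Lambda$-tail of $\N$. Let $\sigma:\N|((\delta_0^\N)^{++})^\N\to(\M_\infty)^{D^+(\N,\lambda^\N)}$ be the corresponding direct limit map, and pick $\overline\alpha$ with $\sigma(\overline\alpha)=\alpha$.

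It remains to verify the intermediate equivalence
\begin{equation*}
D^+(\M_\infty^+,\lambda^{\M_\infty})\vDash\phi[\pi(\alpha)]\iff D^+(\N,\lambda^\N)\vDash\phi[\sigma(\overline\alpha)],
\end{equation*}
which combined with the reflection setup produces the desired contradiction. The key point is that $\Sigma_\infty^N$ is a tail of $\Lambda$ (since it moves all the $\tau^{\M_\infty^N}_{A,0}$ for $A\in\mathbb{B}(\P,\Sigma)^N$ correctly, and $\Lambda$ is guided by the cofinal $\vec{D}$), and that the fragment of $\Lambda$ needed to compute $\sigma(\overline\alpha)$ is definable inside $D^+(\N,\lambda^\N)$: since $\overline\alpha<\delta_0^{\M_\infty^N}$, choose $D\in\vec{D}$ with $\gamma^{\N,0}_D>\overline\alpha$, and then the relevant fragment is definable over $D^+(\N,\lambda^\N)$ from $D$ and $\N|\delta_0^\N$.

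The hard part will be the construction and analysis of the hybrid mouse $(\N,\Lambda)$ in the reflected universe, specifically confirming that $\Lambda$ is guided by the cofinal family $\vec{D}$ in the sense required by Definitions \ref{B iterability}–\ref{strong B iterability} and that $\Lambda$ has branch condensation and $\Omega$-fullness preservation. Showing that $\Lambda$ is guided by $\vec{D}$ and that an iterate realizes $N$ as its derived model requires the bootstrapping argument from the proof of Theorem \ref{iterability n} (using the commuting diagram of \rfig{diagram}) together with the Prikry/$S$-construction machinery of Section 1.3; everything else (the Vopenka step, the tail relationship of $\Sigma_\infty^N$ to $\Lambda$, and the definability of the fragment used to compute $\sigma(\overline\alpha)$) is a direct adaptation of the $\theta_0$ argument.
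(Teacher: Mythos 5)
Your proposal reproduces the paper's argument faithfully: Vopěnka-style reduction to $P\subseteq\Theta$, the claimed equivalence $\alpha\in P\iff D^+(\M_\infty^+,\lambda^{\M_\infty})\vDash\phi[\pi(\alpha)]$, $\Sigma_1$-reflection of its failure to a minimal Suslin-co-Suslin $N$, the construction of a hybrid mouse $\N$ with strategy $\Lambda$ guided by the cofinal family $\vec D$ that realizes $N$ as its derived model, and the intermediate equivalence via the definability of the fragment of $\Lambda$ computing $\sigma(\bar\alpha)$ from a $D_n$ with $\gamma^{\N,0}_{D_n}>\bar\alpha$. This is essentially the same proof as the paper's, so your plan is correct.
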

\begin{proof}  Using \rthm{WoodinVopenka}, we know $\textrm{HOD} = L[P]$ for some $P \subseteq \Theta$. Therefore, it is enough to show $P \in L(\mathcal{M}_\infty, \Sigma_\infty)$. Let $\phi$ be a formula defining $P$, i.e.
\begin{equation*}
\alpha \in P \Leftrightarrow V \vDash \phi[\alpha].
\end{equation*}
We suppress the ordinal parameter here. Now in $L(\mathcal{M}_\infty, \Sigma_\infty)$ let $\pi : \mathcal{M}_\infty | (\eta_0^{++})^{\mathcal{M}_\infty} \rightarrow (\mathcal{M}_\infty)^{D^+(\mathcal{M}_\infty^+,\lambda^{\mathcal{M}_\infty})}$ where $\pi$ is according to $\Sigma_\infty$.
\\
\\
$\textbf{Claim:}$  $\alpha \in P \Leftrightarrow D(\mathcal{M}_\infty^+, \lambda^{M_\infty}) \vDash \phi[\pi(\alpha)]$. \ \  (\textasteriskcentered)
\begin{proof}  Otherwise, reflect the failure of (\textasteriskcentered) as before to get a model $N$ coded by a Suslin co-Suslin set, a hod pair $(\P,\Sigma)$ giving us $\textrm{HOD}|\theta_\alpha$ such that  
\begin{eqnarray*}
N \vDash \textsf{ZF} + \textsf{DC} + \textsf{AD}^+ +  V = K^{\Sigma}(\mathbb{R}) + (\exists \alpha)(\phi[\alpha] \nLeftrightarrow D^+(\mathcal{M}_\infty^+,\Sigma_\infty) \vDash \phi[\pi(\alpha)]).
\end{eqnarray*}
Fix such an $\alpha$. As before, let $\mathcal{N}$ be the next mouse (i.e. $\N$ has $\omega$ Woodins $\langle\delta_i \ | \ i<\omega\rangle$ on top of $\P$) with $\rho(\N)<sup_{i}\delta_i$) with strategy $\Lambda$ extending $\Sigma$ and $\Lambda$ has branch condensation and is $\Omega$-fullness preserving, where $\Omega = (\Sigma^2_1)^N$. We may assume $\Lambda$ is guided by $\vec{D}$ where $\vec{D} = \langle D_n \ | \ n<\omega\rangle$ is defined as in Lemma \ref{Minftylarge}. As before, we may assume $\N$ can realize $N$ as its new derived model. Let $\sigma : N|((\delta^N_0)^{++})^N \rightarrow (M_\infty)^{D^+(N,\lambda^N)}$ be the direct limit map by $\Lambda$. We may assume $\sigma(\overline{\alpha}) = \alpha$ for some $\overline{\alpha}$. Working in $N$, it then remains to see that:
\begin{equation*}
D^+(\mathcal{M}_\infty^+,\lambda^{\mathcal{M}_\infty})\vDash\phi[\pi(\alpha)] \Leftrightarrow D^+(\N,\lambda^\N) \vDash \phi[\sigma(\overline{\alpha})] \ \ (**).
\end{equation*} 
To see that (\textasteriskcentered \textasteriskcentered) holds, we need to see that the fragment of $\Lambda$ that defines $\sigma(\overline{\alpha})$ can be defined in $D^+(\N, \lambda^\N)$. This then will give the equivalence in (\textasteriskcentered \textasteriskcentered). Because $\alpha < \eta_0$,  $\overline{\alpha} < \delta_0$, pick an $n$ such that such that $\gamma_{D_n,0}^{\N,0} > \overline{\alpha}$. Then the fragment of $\Lambda$ that defines $\sigma(\overline{\alpha})$ is definable from $D_n$ (and $\N|(\delta_0^\N)$) in $D^+(\N, \lambda^\N)$, which is what we want. 

The equivalence (\textasteriskcentered \textasteriskcentered) gives us a contradiction.
\end{proof}
The claim finishes the proof of $P \in L(\mathcal{M}_\infty, \Sigma_\infty)$. This then implies $\textrm{HOD} = L[P] \subseteq L(\mathcal{M}_\infty, \Sigma_\infty)$.
\end{proof}
\indent Lemma \ref{keylemma1} implies $\textrm{HOD} = L(\mathcal{M}_\infty, \Sigma_\infty)$, hence completes our computation. 

As mentioned above, aside from assuming $(\textasteriskcentered)$, we also assume $\Sigma$ is such that $K^\Sigma(\mathbb{R})$ is defined. That obviously leaves open whether the HOD computation can be carried out with simply assuming (\textasteriskcentered).

\section{The Limit Case}
There are two cases: the easier case is when $\textrm{HOD} \vDash ``cof(\Theta) \textrm{ is not measurable}"$, and the harder case is when $\textrm{HOD} \vDash ``cof(\Theta) \textrm{ is measurable}"$. 
\\
\indent Here's the direct limit system that gives us $V^{\textrm{HOD}}_\Theta$.
\begin{equation*}
\mathcal{F} = \{(\mathcal{Q},\Lambda) \ | \ (\mathcal{Q},\Lambda) \textrm{ is a hod pair; }\Lambda \textrm{ is fullness preserving and has branch condensation}\}.
\end{equation*} 
The order on $\mathcal{F}$ is given by
\begin{equation*}
(\mathcal{Q},\Lambda) \leq^{\mathcal{F}} (\mathcal{R},\Psi)\ \Leftrightarrow \ \mathcal{Q} \textrm{ iterates to a hod initial segment of } \mathcal{R}.
\end{equation*}
By Theorem \ref{branch condensation's consequences}, $\leq^{\mathcal{F}}$ is directed and we can form the direct limit of $\mathcal{F}$ under the natural embeddings coming from the comparison process. Let $\mathcal{M}_\infty$ be the direct limit. By the computation in \cite{ATHM}, 
\begin{equation*}
|\mathcal{M}_\infty| = V^{\textrm{HOD}}_\Theta.
\end{equation*}
$\M_\infty$ as a structure also has a predicate for its extender sequence and a predicate for a sequence of strategies.

We quote a theorem from \cite{ATHM} which will be used in the upcoming computation. For unexplained notations, see \cite{ATHM}.
\begin{theorem}[Sargsyan, Theorem 4.2.23 in \cite{ATHM}]
\label{well behaved hod mice}
Suppose $(\mathcal{P},\Sigma)$ is a hod pair such that $\Sigma$ has branch condensation and is fullness preserving. There is then $\mathcal{Q}$ a $\Sigma$-iterate of $\mathcal{P}$ such that whenever $\mathcal{R}$ is a $\Sigma_{\mathcal{Q}}$-iterate of $\mathcal{Q}$, $\alpha < \lambda^{\mathcal{R}}$, and $B \in (\mathbb{B}(\mathcal{R}(\alpha), \Sigma_{\mathcal{R}(\alpha)}))^{L(\Gamma(\mathcal{R}(\alpha+1),\Sigma_{\mathcal{R}(\alpha+1)}))}$
\begin{enumerate}
\item $\Sigma_{\mathcal{R}(\alpha+1)}$ is super fullness preserving and is strongly guided by some
\begin{equation*}
\vec{B} = \langle B_i \ | \ i<\omega\rangle \subseteq (\mathbb{B}(\mathcal{R}(\alpha), \Sigma_{\mathcal{R}(\alpha)}))^{L(\Gamma(\mathcal{R}(\alpha+1),\Sigma_{\mathcal{R}(\alpha+1)}))};
\end{equation*}
\item there is a $(\mathcal{S},\Sigma_\S) \in I(\mathcal{R}(\alpha+1), \Sigma_{\mathcal{R}(\alpha+1)})$ such that $\Sigma_{\mathcal{S}}$ has strong $B$-condensation.  
\end{enumerate}
\end{theorem}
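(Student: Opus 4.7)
The plan is to adapt the $\omega \times \omega$ commutative-diagram argument from the proof of Theorem \ref{iterability n} (Figure \ref{diagram}) and combine it with the self-justifying-system machinery used in the analysis of $\M_\infty$ in Lemma \ref{Minftylarge}. The two clauses will be established in parallel: clause (1) constructs a cofinal guide-sequence along which the strategy at level $\alpha+1$ is determined, and clause (2) extracts from it an iterate with strong $B$-condensation.

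For clause (1), fix the pointclass $\Omega = L(\Gamma(\R(\alpha+1),\Sigma_{\R(\alpha+1)}))$ and work inside this $\textsf{AD}^+$-model. Using scale constructions in the style of \cite{Scalesendgap}, I would extract a pseudo-sjs $\vec{B} = \langle B_i \mid i<\omega\rangle$ of sets in $(\mathbb{B}(\R(\alpha),\Sigma_{\R(\alpha)}))^\Omega$ with $B_0 = B$ such that the $B_i$ are Wadge-cofinal in the $OD_{\Sigma_{\R(\alpha)}}$ subsets of $\Omega$. Since $\Sigma_{\R(\alpha+1)}$ is fullness preserving with branch condensation, the computation in Claim~1 of Lemma \ref{Minftylarge} -- applied now with $\vec{B}$ in place of $\vec{D}$ -- shows that the fragment of $\Sigma_{\R(\alpha+1)}$ on trees above $\R(\alpha)$ and below the top Woodin of $\R(\alpha+1)$ is guided by $\vec{B}$; branch condensation upgrades this to strong guidance. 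Super fullness preservation then follows from the guide-by-$\vec{B}$ together with the standard $S$-construction argument promoting $\Gamma$-fullness to its super form.

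For clause (2), suppose toward a contradiction that for every $\Sigma_\Q$-iterate $\R$ and every $(\S,\Sigma_\S) \in I(\R(\alpha+1),\Sigma_{\R(\alpha+1)})$, strong $B$-condensation fails for $\Sigma_\S$. Then, starting with $\Q^0_0 = \R(\alpha+1)$, build a commutative array $\langle \Q^n_i, \R^n_i, \pi^n_i, \sigma^n_i, j^n_i, \tau^n_i, k^n_i \mid n, i<\omega\rangle$ exactly as in Figure \ref{diagram}, interleaving the hypothesized witnesses of failure with genericity iterations that make an enumeration of $\mathbb{R}$ generic over successive models. Branch condensation of $\Sigma_{\R(\alpha+1)}$ ensures that the direct limits $\Q^\omega_i, \R^\omega_i, \Q^\omega_\omega$ are well-founded and $\Omega$-full. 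The ordinal parameter defining $B$ must then be fixed by the maps $\pi^\omega_n, \sigma^\omega_n, j^\omega_n$ for all sufficiently large $n$, forcing the canonical $B$-terms to be moved correctly at those stages and contradicting the hypothesized failure.

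The main obstacle, and the reason the theorem is non-trivial, is producing a single $\Q$ that witnesses the conclusion uniformly for \emph{all} choices of $\R$, $\alpha$, and $B$ simultaneously. Each individual triple yields, by the previous paragraph, a $\Sigma_{\R(\alpha+1)}$-iterate stabilizing its $B$-condensation witness, but these must be amalgamated. The remedy is a cofinality argument: enumerate the relevant demands, and using $\textsf{DC}$ construct a well-ordered chain of partial $\Sigma$-iterates of $\P$ in which each successor absorbs one demand and each limit takes the direct limit -- shown to remain well-founded and $\Omega$-full via branch condensation. Closure of the terminal $\Q$ under further $\Sigma_\Q$-iterates is then verified using the positionality and commutativity properties afforded by Theorem \ref{branch condensation's consequences}, so that each strong $B$-condensation witness at $\Q$ is transported along any further $\Sigma_\Q$-iteration.
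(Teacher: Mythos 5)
This theorem is stated as a quotation from Sargsyan's \cite{ATHM} (Theorem 4.2.23 there); the present paper gives no proof of it, so there is no internal proof to compare against. Your toolkit is nonetheless the right one in spirit: the $\omega\times\omega$ commutative-diagram argument of Theorem~\ref{iterability n} is exactly what the paper uses to manufacture strong $B$-condensation, and the cofinal $\vec{D}$-guidance from Claim~1 of Lemma~\ref{Minftylarge} is the paper's mechanism for promoting fullness preservation to guidance by a cofinal family of $\mathbb{B}$-sets. The translation of a genuine sjs into the $\mathbb{B}(\R(\alpha),\Sigma_{\R(\alpha)})$ setting via the $A_i\mapsto A_i^*$ device is likewise standard in this paper.

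The gap is the amalgamation step in your last paragraph, which is both underspecified and misdiagnoses where the uniformity bites. You propose to ``enumerate the relevant demands'' and build a well-ordered chain of iterates via $\textsf{DC}$, but under $\textsf{AD}^+$ there is no wellordering of $\powerset(\mathbb{R})$, and the family of triples $(\R,\alpha,B)$ is not wellorderable, so ``enumerate'' needs serious justification; one must instead exploit that each $B$ is $OD$ in the relevant $L(\Gamma(\cdot))$ and that the desired properties are tail-closed so that they can be stated as conditions on $\Q$ alone. You also overstate the uniformity in clause (2): the quantifier pattern is $\exists\Q\,\forall(\R,\alpha,B)\,\exists(\S,\Sigma_\S)$, so the witness $\S$ may depend on $(\R,\alpha,B)$, and no amalgamation of those witnesses is required. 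The genuine burden is clause (1): a single $\Q$ must have $\Sigma_{\Q(\alpha+1)}$ super fullness preserving and strongly guided simultaneously for \emph{every} $\alpha<\lambda^{\Q}$, possibly $\lambda^{\Q}$ uncountably many, and these properties must persist to every $\Sigma_\Q$-iterate $\R$. The persistence uses positionality and commutativity from Theorem~\ref{branch condensation's consequences}, as you say, but arranging them in the first place at $\lambda^{\P}$-many levels is the actual content of Sargsyan's proof and is not covered by a naïve $\textsf{DC}$-chain argument. Finally, ``super fullness preservation follows from guide-by-$\vec{B}$ plus the $S$-construction argument'' is asserted rather than argued; in the hod-mouse setting this is a nontrivial step, not a one-line appeal.
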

We deal with the easy case first.

\subsection{Nonmeasurable Cofinality}
The following theorem is the full $\textrm{HOD}$ computation in this case.
\begin{theorem}
\label{easylimit}
$\textrm{HOD} = L(\M_\infty)$
\end{theorem}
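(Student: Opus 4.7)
The inclusion $L(\M_\infty)\subseteq\textrm{HOD}$ is immediate: the directed system $(\mathcal{F},\leq^\mathcal{F})$ and its direct-limit embeddings are OD, so $\M_\infty\in\textrm{HOD}$. For the reverse inclusion the plan is to invoke \rthm{WoodinVopenka} to write $\textrm{HOD}=L[P]$ for some $P\subseteq\Theta$, reducing to showing $P\in L(\M_\infty)$. Since $|\M_\infty|=V^{\textrm{HOD}}_\Theta$ by the computation of \cite{ATHM} cited above, every bounded initial segment $P\cap\alpha$ already lies in $\M_\infty$, and the remaining task is to assemble these pieces inside $L(\M_\infty)$ along a cofinal chain in $\Theta$.

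This is where the hypothesis enters. Let $\kappa=\textrm{cof}^{\textrm{HOD}}(\Theta)$; then $\kappa<\Theta$ and $\kappa\in|\M_\infty|$. Because $\kappa$ is not measurable in $\textrm{HOD}$, I would produce a cofinal sequence $\langle\xi_i \ | \ i<\kappa\rangle$ into $\Theta$ canonically from the extender and partial strategy information already sitting inside $\M_\infty$, without invoking any ultrafilter on $\kappa$. Granted such a sequence, $P=\bigcup_{i<\kappa}(P\cap\xi_i)$, so it suffices to show that the map $i\mapsto P\cap\xi_i$ is definable over $\M_\infty$.

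For that definability I would follow the reflection template of \rlem{keylemma} and \rlem{keylemma1}. Suppose toward a contradiction that the assignment fails to lie in $L(\M_\infty)$, and reflect this failure via \rthm{fundamental result of ad+} to the least $\gamma<\undertilde{\delta}^2_1$ with $N=_{\textrm{def}}K(\mathbb{R})|\gamma$ witnessing it. Inside $N$, apply \rthm{well behaved hod mice} to obtain a well-behaved hod pair $(\P,\Sigma)$ computing $\textrm{HOD}^N|\Theta^N$; the reflected direct limit $\M^N_\infty$ then arises as a hod initial segment of $\M_\infty$ via the natural comparison map, and (as in the successor-case arguments) one may further arrange that the corresponding derived-model construction realizes $N$ as a new derived model of a suitable $\Sigma$-iterate. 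Elementarity of the reflection, together with the canonicity of the chosen cofinal sequence, forces the $L(\M_\infty)$-definition of $i\mapsto P\cap\xi_i$ in $V$ to coincide with the corresponding definition inside $N$, yielding the desired contradiction.

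The main obstacle is verifying that the cofinal sequence $\langle\xi_i \ | \ i<\kappa\rangle$ can genuinely be chosen inside $L(\M_\infty)$. This is precisely where the nonmeasurability of $\textrm{cof}^{\textrm{HOD}}(\Theta)$ is used: measurability would permit a normal ultrafilter in $\textrm{HOD}$ on $\kappa$ to obstruct any $\M_\infty$-internal canonical choice and force one to adjoin a strategy predicate $\Sigma_\infty$, which is precisely what happens in the measurable subcase. Once the canonical choice of the $\xi_i$ is secured, the reflection portion of the argument is largely parallel to the successor-case proofs in Sections 2 and 3, and in particular no separate strategy predicate needs to be adjoined to $\M_\infty$.
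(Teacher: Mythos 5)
Your proposal correctly starts with Theorem \ref{WoodinVopenka} to reduce to a single $A\subseteq\Theta$, and it name-checks the right ingredients (reflection, Theorem \ref{well behaved hod mice}, derived-model realization), but the central organizing device --- assembling $A$ from its bounded traces $A\cap\xi_i$ along a ``canonical cofinal sequence'' $\langle\xi_i\ |\ i<\kappa\rangle$ read off from extenders in $\M_\infty$ --- does not appear in the paper's argument and does not actually reduce the problem. Showing that the map $i\mapsto A\cap\xi_i$ lies in $L(\M_\infty)$ is just as hard as showing $A\in L(\M_\infty)$ directly: the piecewise decomposition buys nothing, since each $A\cap\xi_i$ already sits in $\M_\infty|\Theta$ whether or not the cofinality is measurable, and the whole difficulty is in the transfer from $\mathrm{OD}$-definability of $A$ to $L(\M_\infty)$-definability of the assignment. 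Relatedly, your account of \emph{where} nonmeasurability enters is off. It is not used to let $L(\M_\infty)$ canonically pick a cofinal sequence; in fact the paper never selects any such sequence inside $L(\M_\infty)$.

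What actually happens is this. After reflecting to a suitable Suslin-co-Suslin coded $N$ (not necessarily of the form $K(\mathbb{R})|\gamma$ --- in the limit-in-Solovay setting that is the wrong target), one runs the $\Omega$-hod pair construction in some $\N^*_x$ to produce a hod pair $(\P,\Sigma)$ with $\Sigma\notin\Omega$, $\rho_\omega(\P)\leq\delta_{\lambda^\P}$, and --- crucially --- $\P\vDash\mathrm{cof}(\lambda^\P)$ \emph{is not measurable}. Nonmeasurability is used precisely so that the successive genericity iterations of $\P$ (making an enumeration of $\mathbb{R}^N$ generic below images of the Woodins $\delta_{\lambda^{\P_{\gamma_i}}}$) move $\delta_{\lambda^\P}$ continuously, and hence so that $N$ is realized \emph{exactly} as the derived model of the resulting iterate $\Q$. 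With that in hand, one gets a direct-limit $\M_\infty^+\subseteq\mathrm{HOD}^N$ extending $\M_\infty^N$, a derived model $D$ of $\M_\infty^+$ at $\Theta^N$, and the equivalence $\alpha\in A\Leftrightarrow D\vDash\phi[\pi_\infty(\alpha)]$, where $\pi_\infty:\M_\infty\to(\M_\infty)^D$ is the direct-limit embedding formed from the join of the strategies of $\M_\infty$'s hod \emph{initial segments}. The reason $L(\M_\infty)$ suffices here (and no auxiliary predicate $\Sigma_\infty$ is needed, unlike in the $\theta_0$ and successor cases) is that those strategies are $\Sigma^{\M_\infty}_\alpha$ for $\alpha<\lambda^{\M_\infty}$ and hence are already internal to $\M_\infty$, so $\pi_\infty$ and $D$ are definable over $L(\M_\infty)$. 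This is the key observation your proposal is missing; without it, the reflection step you describe does not close, because you have not explained why the right-hand side of the derived-model equivalence is computable in $L(\M_\infty)$ rather than in some larger model.
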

\begin{proof}
\indent To prove the theorem, suppose the equality is false. Then by Theorem \ref{WoodinVopenka}, there is an $A\subseteq \Theta$ such that $A \in \textrm{HOD} \backslash L(\M_\infty)$ (the fact that $L(\M_\infty) \subseteq \textrm{HOD}$ is obvious). By $\Sigma_1$-reflection (i.e. Theorem \ref{fundamental result of ad+}), there is a transitive $N$ coded by a Suslin co-Suslin set such that
\begin{eqnarray*}
N &\vDash& \textsf{ZF}^- + \textsf{AD}^+ + V = L(\powerset(\mathbb{R}))+ \textsf{SMC} + \Theta \textrm{ exists and is limit in the Solovay sequence} \\ && + \textrm{HOD} \vDash ``cof(\Theta) \textrm{ is not measurable }"+``\exists B \subseteq \Theta(B \in \textrm{HOD} \backslash L(\M_\infty))".
\end{eqnarray*}
Take $N$ to be the minimal such and let $B$ witness the failure of the theorem in $N$. Let $\phi$ define $B$ (for simplicity, we suppress the ordinal parameter) i.e.
\begin{equation*}
\alpha \in B \ \Leftrightarrow \ N\vDash \phi[\alpha]
\end{equation*}
Let $\Omega = \powerset(\mathbb{R})^N$. There is a pair $(\P,\Sigma)$ such that:
\begin{enumerate}
\item $\P = L_\beta(\cup_{\gamma < \lambda^P}P_\gamma)$ for some $\lambda^P$;
\item for all $\gamma < \lambda^\P$, $\P_\beta$ is a hod mouse whose strategy $\Sigma_\gamma \in \Omega$ is $\Omega$-fullness preserving, has branch condensation, and $\lambda^{\P_\beta} = \beta$;
\item if $\gamma < \eta < \lambda^\P$, $\P_\gamma \unlhd_{hod} \P_\eta$;
\item $\beta$ is least such that $\rho_\omega(L_\beta(\cup_{\gamma < \lambda^\P}\P_\gamma)) < o(\cup_{\gamma < \lambda^\P}\P_\gamma))$;
\item $\P \vDash cof(\lambda^\P)$ is not measurable; 
\item $\Sigma$ has branch condensation and extends $\varoplus_{\gamma<\lambda^\P}{\Sigma_\gamma}$;
%\item $(P,\Sigma)$ is minimal (in the Dodd-Jensen order) with properties $(1)-(6)$.
\end{enumerate}
Such a $(\P,\Sigma)$ can be obtained by performing a $\Omega$-hod pair construction (see Definition \ref{gamma hod pair construction}) inside some $N^*_x$ capturing a good pointclass beyond $\Omega$. We may and do assume that $(\cup_{\gamma < \lambda^{\mathcal{P}}}\P_\gamma, \varoplus_{\gamma<\lambda^\P}{\Sigma_\gamma})$ satisfies Theorem \ref{well behaved hod mice} applied in $N$. This implies that the direct limit $\mathcal{M}_\infty^+$ of all $\Sigma$-iterates of $\P$ is a subset of $\textrm{HOD}^N$. Let $j: \P \rightarrow \mathcal{M}_\infty^+$ be the natural map. Then in $N$, $\mathcal{M}_\infty^+|j(\lambda^\P) = \mathcal{M}_\infty$.
\\
\indent Now pick a sequence $\langle \gamma_i \ | \ i < \omega\rangle$ cofinal in $\lambda^\P$ such that $\delta_{\lambda^{\P_{\gamma_i}}}$ is Woodin in $\P$, an enumeration $\langle x_i \ | \ i < \omega\rangle$ of $\mathbb{R}$ and do a genericity iteration of $\P$ to successively make each $x_i$ generic at appropriate image of $\delta_{\lambda^{\P_{\gamma_i}}}$. Let $\Q$ be the end model of this process and $i: \P \rightarrow \Q$ be the iteration embedding. Then by assumption (5) above, we have that $N$ is the derived model of $\Q$ at $i(\lambda^P)$.
\\
\indent In $N$, let $D$ be the derived model of $\mathcal{M}^+_\infty$ at $\Theta$ and 
\begin{equation*}
\pi_\infty: \mathcal{M}_\infty \rightarrow (\mathcal{M}_\infty)^D
\end{equation*}
be the direct limit embedding given by the join of the strategies of $\mathcal{M}_\infty$'s hod initial segments. Then by the same argument as that given in Lemma \ref{keylemma}, we have 
\begin{equation*}
\alpha \in B \ \Leftrightarrow \ D \vDash \phi[\pi_\infty(\alpha)].
\end{equation*}
The proof of Lemma \ref{keylemma} also gives that $B \in (L(\mathcal{M}_\infty))^N$, which contradicts our assumption. Hence we're done.
\end{proof}

\begin{remark}
It's not clear that in the statement of Theorem \ref{easylimit}, ``$\M_\infty$" can be replaced by ``$V^{\textrm{HOD}}_\Theta$".
\end{remark}

\subsection{Measurable Cofinality}
Suppose $\textrm{HOD} \vDash cof(\Theta)$ is measurable. We know by \cite{ATHM} that $V^{\textrm{HOD}}_\Theta$ is $|\mathcal{N}_\infty|$ where $\mathcal{N}_\infty$ is the direct limit (under the natural maps) of $\mathcal{F}$, where $\mathcal{F}$ is introduced at the beginning of this section. Let 
\begin{equation*}
\M_\infty = Ult_0(\textrm{HOD},\mu)|\Theta,
\end{equation*}
where $\mu$ is the order zero measure on $cof^{\textrm{HOD}}(\Theta)$. Let $f:\textrm{cof}^{\textrm{HOD}}(\Theta)=_{def}\alpha \rightarrow \Theta$ be a continuous and cofinal function in HOD. For notational simplicity, for each $\beta < \alpha$, let $\Lambda_\beta$ be the strategy of $\M_\infty(f(\beta))$ and $\Sigma_\beta$ be the strategy of $\N_\infty(f(\beta))$. Let
\begin{equation*}
\M_\infty^+ = Ult_0(\textrm{HOD},\mu)|(\Theta^+)^{Ult_0(\textrm{HOD},\mu)},
\end{equation*}
and
\begin{equation*}
\N_\infty^+ = \cup\{\M \ | \ \N_\infty \trianglelefteq \M, \ \rho(\mathcal{M})=\Theta, \mathcal{M} \textrm{ is a hybrid mouse satisfying property (\textasteriskcentered)}\}.
\end{equation*}
Here a mouse $\M$ satisfies property (\textasteriskcentered) if whenever $\pi: \M^* \rightarrow \M$ is elementary, $\M^*$ is countable, transitive, and $\pi(\Theta^*) = \Theta$, then $\M^*$ is a $\oplus_{\xi<\Theta^*}\Sigma_\xi^*$-mouse for stacks above $\Theta^*$, where $\Sigma_\xi^*$ is the strategy for the hod mouse $\M^*(\xi)$ obtained by the following process: let $(\P,\Sigma)\in \mathcal{F}$ and $i:\P\rightarrow \M_\infty$ be the direct limit embedding such that the range of $i$ contains the range of $\pi\rest \M^*(\xi)$; $\Sigma_\xi^*$ is then defined to be the $\pi\circ i^{-1}$-pullback of $\Sigma$. It's easy to see that the strategy $\Sigma_\xi^*$ as defined doesn't depend on the choice of $(\P,\Sigma)$. This is because if $(\P_0,\Sigma_0,i_0)$ and $(\P_1,\Sigma_1,i_1)$ are two possible choices to define $\Sigma_\xi^*$, we can coiterate $(\P_0,\Sigma_0)$ against $(\P_1,\Sigma_1)$ to a pair $(\R,\Lambda)$ and let $i_i:\P_i\rightarrow \R$ be the iteration maps and let $i_2:\R\rightarrow \M_\infty$ be the direct limit embedding. Then $\Sigma_0 = \Lambda^{i_0}$ and $\Sigma_1 = \Lambda^{i_1}$; hence the $\pi\circ i_0^{-1}$-pullback of $\Sigma_0$ is the same as the $\pi\circ i_1^{-1}$-pullback of $\Sigma_1$ because both are the same as the $\pi\circ i_2^{-1}$-pullback of $\Lambda$.

 We give two characterizations of $\textrm{HOD}$ here: one in terms of $\M_\infty^+$ and the other in terms of $\N_\infty^+$. The first one is easier to see.
\begin{theorem}
\label{SingularMeasurable}
\begin{enumerate}
\item $\textrm{HOD} = L(\N_\infty,\M_\infty^+)$.
\item $\textrm{HOD} = L(\N_\infty^+)$.
\end{enumerate}
\end{theorem}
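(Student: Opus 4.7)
The plan is to establish (1) first and then derive (2) by showing $\M_\infty^+\in L(\N_\infty^+)$. For both parts the inclusions $L(\N_\infty,\M_\infty^+)\subseteq\textrm{HOD}$ and $L(\N_\infty^+)\subseteq\textrm{HOD}$ are routine: $\N_\infty$ is the direct limit of the OD system $\mathcal{F}$, $\M_\infty^+$ is the restricted HOD-ultrapower by the OD measure $\mu$, and the criterion (\textasteriskcentered) defining $\N_\infty^+$ is OD since the sequence $\la\Sigma_\xi:\xi<\Theta\ra$ is definable from $\mathcal{F}$.

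For the nontrivial direction of (1), $\textrm{HOD}\subseteq L(\N_\infty,\M_\infty^+)$, I would follow the template of \rlem{keylemma} and \rlem{keylemma1}. By \rthm{WoodinVopenka}, $\textrm{HOD}=L[P]$ for some $P\subseteq\Theta$ defined by a formula $\phi$; assuming $P\notin L(\N_\infty,\M_\infty^+)$, use $\Sigma_1$-reflection (\rthm{fundamental result of ad+}) to pass to a minimal transitive $N$ coded by a Suslin co-Suslin set witnessing the failure at some $\alpha_0$, which also satisfies that HOD computes a measurable cofinality for $\Theta$. Inside a coarse $\Omega$-Woodin $\N^*_x$ capturing a good pointclass beyond $\Omega=\powerset(\mathbb{R})^N$, run an $\Omega$-hod pair construction (\rdef{gamma hod pair construction}) to produce a pair $(\P,\Sigma)$ analogous to the one in the proof of \rthm{easylimit}, but now arranged so that $\P\models \textrm{cof}(\lambda^\P)$ \emph{is} measurable. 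Using \rthm{well behaved hod mice}, ensure that the direct limit of all $\Sigma$-iterates of $\P$ recovers $\N_\infty^N$, then genericity-iterate $\P$ to $\Q$ via $i:\P\to\Q$ so that $N$ is realized as the derived model of $\Q$ at $i(\lambda^\P)$. The HOD-ultrapower map $j^N:\textrm{HOD}^N\to Ult_0(\textrm{HOD}^N,\mu^N)$, whose restriction up to $(\Theta^{N,+})^{Ult_0}$ is $\M_\infty^{+,N}$, supplies the ingredient that was missing in \rthm{easylimit}: inside $L(\N_\infty^N,\M_\infty^{+,N})$ one can see both the direct-limit embedding $\pi_\infty$ and the ultrapower map, and the standard computation then yields $\alpha\in P\Leftrightarrow D\models\phi[\pi_\infty(\alpha)]$ for a suitable derived model $D$, contradicting the choice of $\alpha_0$.

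For (2), given (1) it suffices to prove $\M_\infty^+\in L(\N_\infty^+)$. The strategy information built into $\N_\infty^+$ by clause (\textasteriskcentered) encodes the whole sequence $\la\Sigma_\xi:\xi<\Theta\ra$; from this, one reconstructs $\mu$ inside $L(\N_\infty^+)$, either by identifying the appropriate extender on the $\N_\infty^+$-sequence as coding $\mu$, or by characterizing $\mu$ as the unique normal measure of order zero on $\textrm{cof}^{\textrm{HOD}}(\Theta)$ that coheres with the $\Sigma_\xi$'s — using that $L(\N_\infty^+)$ already sees $\textrm{HOD}|\Theta=\N_\infty$. Once $\mu$ is recovered, the ultrapower $Ult_0(\textrm{HOD},\mu)|(\Theta^+)^{Ult_0}=\M_\infty^+$ can be formed inside $L(\N_\infty^+)$.

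The principal obstacle is the step in (1) of producing, inside the reflected model $N$, a hod pair $(\P,\Sigma)$ whose cofinality at $\lambda^\P$ is measurable in $\P$ itself and is coordinated with the genericity iteration so that the $\P$-internal ultrapower realizes $\M_\infty^{+,N}$; this is where the present case genuinely diverges from \rthm{easylimit}. A secondary but essential difficulty in (2) is verifying that the strategy predicate on $\N_\infty^+$ uniformly determines $\mu$, i.e., that the mouse-theoretic data in $\N_\infty^+$ really does reconstruct the measure-theoretic structure of $\textrm{HOD}$ above $\Theta$ and not merely its ordinal structure up to $\Theta$.
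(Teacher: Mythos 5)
Your proposal diverges from the paper's proof in both parts, and each divergence hides a genuine gap.

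For (1), you propose a reflection argument modeled on Theorem~\ref{easylimit}, but the paper's proof of (1) is a short direct computation with no reflection at all. Given $A\in\textrm{HOD}\cap\powerset(\Theta)$, one observes that $j_\mu(A)\cap\Theta$ has rank below $(\Theta^+)^{Ult_0(\textrm{HOD},\mu)}$ and hence belongs to $\M_\infty^+$ by definition, and that $j_\mu\rest\Theta$ coincides on ordinals below $\Theta$ with the ultrapower map $k:\N_\infty\to Ult_0(\N_\infty,\mu)$, which $L(\N_\infty,\M_\infty^+)$ can compute; the equivalence $\gamma\in A\Leftrightarrow j_\mu(\gamma)\in j_\mu(A)\cap\Theta$ then recovers $A$. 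Your route, by contrast, runs into the obstacle you flag but do not resolve: the step in Theorem~\ref{easylimit} where one genericity-iterates $\P$ and concludes that $N$ is the derived model of the iterate uses clause (5) there, namely $\P\vDash\textrm{cof}(\lambda^\P)$ is \emph{not} measurable. When $\textrm{cof}(\lambda^\P)$ is measurable in $\P$, the analogous realization needs a mouse $\Q$ extending $\cup_\gamma\P_\gamma$ whose induced strategy $\Lambda$ has branch condensation and is $\Omega$-fullness preserving, and establishing exactly this is the technical content of Lemma~\ref{direct limit exists} inside the paper's proof of part (2). Asserting that ``the HOD-ultrapower map supplies the ingredient that was missing'' does not substitute for that argument.

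For (2), you propose to reduce to (1) by showing $\M_\infty^+\in L(\N_\infty^+)$, arguing that $L(\N_\infty^+)$ can reconstruct $\mu$. But recovering $\mu$ is not the issue: $\mu$ is a collection of subsets of $\alpha=\textrm{cof}^{\textrm{HOD}}(\Theta)<\Theta$, hence has rank below $\Theta$ and already belongs to $|\N_\infty|=V^{\textrm{HOD}}_\Theta$. The problem is the next step. Forming $Ult_0(\textrm{HOD},\mu)$ requires the $\textrm{HOD}$-functions $f:\alpha\to\textrm{HOD}$, i.e.\ access to $\textrm{HOD}$ itself, which is exactly the model you are trying to pin down; nothing in the definition of $\N_\infty^+$ (its strategy predicate records pullback strategies of hod initial segments of $\N_\infty$, all objects of rank below $\Theta$) gives you those functions, so the reduction is circular as stated. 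The paper instead proves (2) by a self-contained contradiction argument, not by way of (1): reflect the failure to a minimal Suslin-co-Suslin $N$, run the $\Omega$-hod pair construction in a suitable $\N^*_x$ to obtain $(\Q,\Lambda)$ with $\Q\vDash\delta_{\lambda^\Q}$ of measurable cofinality, prove $\Lambda$ is $\Omega$-fullness preserving with branch condensation (the hard step, Lemma~\ref{direct limit exists}), show $\M_\infty(\Q,\Lambda)\notin N$, and then compare the $\mu$-limit $\W$ of the definability witnesses $\langle\M_\beta\rangle$ against the theory $\S$ of $Ult_k(\M_\infty(\Q,\Lambda),\mu)$, reaching a contradiction in either comparison outcome. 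To salvage your reduction you would need a lemma that the functions representing elements of $\M_\infty^+$ are definable from the data present in $\N_\infty^+$; that is a substantial claim requiring its own proof and is not established by the paper.
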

\begin{proof}
To prove (1), first let $j_\mu: \textrm{HOD} \rightarrow Ult_0(\textrm{HOD},\mu)$ be the canonical ultrapower map. Let $A \in \textrm{HOD}$, $A \subseteq \Theta$. By the computation of \textrm{HOD} below $\Theta$, we know that for each limit $\beta < \alpha$,
\begin{equation*}
A \cap \theta_{f(\beta)} \in |\N_\infty(f(\beta))|.
\end{equation*}
%From this, it is easy to see that there is a sequence $\langle\mathcal{M}_\beta \ | \ \beta< \alpha, \beta \textrm{ is limit}\rangle \in \textrm{HOD}$ such that for each limit $\beta < \alpha$
%\begin{enumerate}
%\item $\mathcal{M}_\beta$ is a $\varoplus_{\gamma<\beta}\Sigma_\beta$-premouse over %$\textrm{HOD}|\theta_\beta$;
%\item $\mathcal{M}_\beta$ is countably iterable and $\rho(\mathcal{M}_\beta) = \theta_\beta$;
%\item $A\cap\theta_\beta \in \mathcal{M}_\beta$.
%\end{enumerate}
%Let $\M_\alpha = j_\mu(\langle\M_\beta \ | \ \beta<\alpha, \beta \textrm{ is limit}\rangle)(\alpha)$. Then $\M_\alpha \in \M_\infty^+$ and for any ordinal $\gamma < \Theta$

This means
\begin{center}
$j_\mu(A)\cap \Theta \in \M_\infty^+$.
\end{center}
We then have
\begin{eqnarray*}
\gamma \in A \Leftrightarrow j_\mu(\gamma) \in j_\mu(A)\cap\Theta.
\end{eqnarray*}
Since $j_\mu|\Theta$ agrees with the canonical ultrapower map $k: \N_\infty \rightarrow Ult_0(\N_\infty,\mu)$ on all ordinals less than $\Theta$, the above equivalence shows that $A \in L(\N_\infty,\M_\infty^+)$. This proves (1).
\\
\indent Suppose the statement of (2) is false. There is an $A \subseteq \Theta$ such that $A \in \textrm{HOD} \backslash \mathcal{N}_\infty^+$. By $\Sigma_1$-reflection (i.e. Theorem \ref{fundamental result of ad+}), there is a transitive $N$ coded by a Suslin co-Suslin set such that
\begin{eqnarray*}
N &\vDash& \textsf{ZF}^- + \textsf{DC} + V = L(\powerset(\mathbb{R}))+ \textsf{SMC} + ``\Theta \textrm{ exists and is limit in the Solovay sequence }" \\ && + ``\textrm{HOD} \vDash cof(\Theta) = \alpha \textrm{ is measurable as witnessed by }f" \\ &&+``\exists A \subseteq \Theta(A \in \textrm{HOD} \backslash \mathcal{N}_\infty^+)".
\end{eqnarray*}
Take $N$ to be the minimal such and let $A$ witness the failure of (2) in $N$. Let $\mu$, $j_\mu$, $\M_\infty$, $\M_\infty^+$, $\N_\infty$, $\N_\infty^+$ be as above but relativized to $N$. Working in $N$, there is a sequence $\langle\mathcal{M}_\beta \ | \ \beta< \alpha, \beta \textrm{ is limit}\rangle \in \textrm{HOD}$ such that for each limit $\beta < \alpha$, 
$\mathcal{M}_\beta$ is the least hod initial segment of $\N_\infty|\theta_{f(\beta)}$ such that $A\cap \theta_{f(\beta)}$ is definable over $\M_\beta$.

Let $\Omega = \powerset(\mathbb{R})^N$. Fix an $N^*_x$ capturing a good pointclass beyond $\Omega$. Now, we again do the $\Omega$-hod pair construction in $N^*_x$ to obtain a pair $(\Q,\Lambda)$ such that
\begin{enumerate}
\item there is a limit ordinal $\lambda^\Q$ such that for all $\gamma < \lambda^\Q$, $\Q_\beta$ is a hod mouse with $\lambda^{\Q_\beta} = \beta$ and whose strategy $\Psi_\gamma \in \Omega$ is $\Omega$-fullness preserving, has branch condensation;
\item if $\gamma < \eta < \lambda^\Q$, $\Q_\gamma \unlhd_{hod} \Q_\eta$; 
%\item $\cup_{\gamma<\lambda^\Q}\Q_\gamma \vDash \textrm{cof}(\lambda^\Q$) is measurable; 
%\item the induced strategy $\Lambda$ of $Lp^{\Omega,\varoplus_{\gamma<\lambda^\Q}\Psi_\gamma}(\cup_{\gamma<\lambda^\Q}\Q_\gamma)$\footnote{If $\M \lhd Lp^{\Omega,\varoplus_{\gamma<\lambda^\Q}\Psi_\gamma}(\cup_{\gamma<\lambda^\Q}\Q_\gamma)$ and $\M$ extends $\cup_{\gamma<\lambda^\Q}\Q_\gamma$ then $\M$ is a mouse in $N$ in the sense that $N$ knows how to iterate $\M$ for stacks above $o(\cup_{\gamma<\lambda^\Q}\Q_\gamma)$.} is not in $\Omega$;
\item $\Q$ is the first sound mouse from the $L[E, \varoplus_{\gamma<\lambda^\Q}\Psi_\gamma][\cup_{\gamma < \lambda^\Q} \Q_\gamma]$-construction done in $N^*_x$ that has projectum $\leq o(\cup_{\gamma<\lambda^\Q}\Q_\gamma)$ and extends $Lp^{\Omega,\varoplus_{\gamma<\lambda^\Q}\Psi_\gamma}(\cup_{\gamma<\lambda^\Q}\Q_\gamma)$
\footnote{If $\M \lhd Lp^{\Omega,\varoplus_{\gamma<\lambda^\Q}\Psi_\gamma}(\cup_{\gamma<\lambda^\Q}\Q_\gamma)$ and $\M$ extends $\cup_{\gamma<\lambda^\Q}\Q_\gamma$ then $\M$ is a mouse in $N$ in the sense that $N$ knows how to iterate $\M$ for stacks above $o(\cup_{\gamma<\lambda^\Q}\Q_\gamma)$.} and $\Lambda$ be the induced strategy of $\Q$.
\end{enumerate}
From the construction of $\Q$ and the properties of $N$, it's easy to verify the following:
\begin{enumerate}
\item Let $\delta_{\lambda^\Q} = o(\cup_{\gamma<\lambda^\Q}\Q_\gamma)$ and $\eta = o(Lp^{\Omega,\varoplus_{\gamma<\lambda^\Q}\Psi_\gamma}(\cup_{\gamma<\lambda^\Q}\Q_\gamma))$. Then $\eta=(\delta_{\lambda^\Q}^+)^\Q$.
\item $\Lambda\notin \Omega$.
\item $\Q\vDash \delta_{\lambda^\Q}$ has measurable cofinality. 
\end{enumerate}
Let $\mathcal{M}_\infty(\Q,\Lambda)$ be the direct limit (under natural embeddings) of $\Lambda$-iterates of $\Q$. 
\begin{lemma}
\label{direct limit exists}
$\mathcal{M}_\infty(\Q,\Lambda)$ exists. 
\end{lemma}
\begin{proof}
\indent First note that $\Lambda$ is $\Omega$-fullness preserving. To see this, suppose not. Let $k: \Q \rightarrow \R$ be according to $\Lambda$ witnessing this. It's easy to see that the tail $\Lambda_\R$ of $\Lambda$ acting on $\R|k(\eta)$ is not in $\Omega$ (otherwise, $\Lambda_\R^k = \Lambda$ by hull condensation and hence $\Lambda \in \Omega$. Contradiction.)  However, $\varoplus_{\gamma<\lambda^\R}\Psi_{\R(\gamma)} \in \Omega$ since the iterate of $N^*_x$ by the lift-up of $k$ thinks that the fragment of its strategy inducing $\varoplus_{\gamma<\lambda^\R}\Psi_{\R(\gamma)}$ is in $\Omega$. Now suppose $\M$ is a $\varoplus_{\gamma<\lambda^\R}\Psi_{\R(\gamma)}$-mouse projecting to $\delta_{\lambda^\R}$ with strategy $\Xi$ in $\Omega$ and $\M \ntrianglelefteq \R$ (again, $\Xi$ acts on trees above $\delta_{\lambda^\R}$ and moves the predicates for $\varoplus_{\gamma<\lambda^\R}\Psi_{\R(\gamma)}$ correctly). We can compare $\M$ and $\R$ (the comparison is above $\delta_{\lambda^\R}$). Let $\overline{\M}$ be the last model on the $\M$ side and $\overline{\R}$ on the $\R$ side. Then $\overline{\R} \lhd \overline{\M}$. Let $\pi: \R \rightarrow \overline{\R}$ be the iteration map from the comparison process and $\Sigma$ be the $\pi\circ k$-pullback of the strategy of $\overline{\R}$. Hence $\Sigma \in \Omega$ since $\Xi \in \Omega$. $\Sigma$ acts on trees above $\delta_{\lambda^\Q}$ and moves the predicate for $\varoplus_{\gamma<\lambda^\Q}\Psi_{\gamma}$ correctly by by our assumption on $\Xi$ and branch condensation of $\varoplus_{\gamma<\lambda^\Q}\Psi_{\gamma}$. These properties of $\Sigma$ imply that $\Q\lhd Lp^{\Omega,\varoplus_{\gamma<\lambda^\Q}\Psi_\gamma}(\cup_{\gamma<\lambda^\Q}\Q_\gamma)$. Contradiction. For the case that there are $\alpha<\lambda^\R$, $\delta_\alpha^\R \leq \eta < \delta^\R_{\eta+1}$, and $\eta$ is a strong cutpoint of $\R$, and $\M$ is a sound $\Psi_{\R(\alpha)}$-mouse projecting to $\eta$ with iteration strategy in $\Omega$, the proof is the same as that of Theorem 3.7.6 in \cite{ATHM}.
\\
\indent Now we show $\Lambda$ has branch condensation (see Figure \ref{proof of branch condensation}). The proof of this comes from private conversations between the author and John Steel. We'd like to thank him for this. For notational simplicity, we write $\Lambda^-$ for $\varoplus_{\gamma<\lambda^\Q}\Psi_{\gamma}$. Hence, $\Lambda \notin \Omega$ and $\Lambda^-\in\Omega$. Suppose $\Lambda$ does not have branch condensation. We have a minimal counterexample as follows: there are an iteration $i: \Q \rightarrow \R$ by $\Lambda$, a normal tree $\mathcal{U}$ on $\R$ in the window $[\xi, \gamma)$ where $\xi < \gamma$ are two consecutive Woodins in $\R$ such that sup$i''\delta_{\lambda^\Q} \le \xi$, two distinct cofinal branches of $\mathcal{U}$: $b$ and $c = \Lambda_\R(\mathcal{U})$, an iteration map $j: \Q \rightarrow \S$ by $\Lambda$, and a map $\sigma: \M^{\mathcal{U}}_b \rightarrow \S$ such that $j = \sigma \circ i^{\mathcal{U}}_b \circ i$. We may also assume that if $\overline{\R}$ is the first model along the main branch of the stack from $\Q$ to $\R$ giving rise to $i$ and $i_{\overline{\R},\R}: \overline{\R}\rightarrow \R$ be the natural map such that $i_{\overline{\R},\R}(\overline{\xi}) = \xi$ and $i_{\overline{\R},\R}(\overline{\gamma}) = \gamma$, then the extenders used to get from $\Q$ to $\overline{\R}$ have generators below $\overline{\xi}$. This gives us sup$(Hull^\R(\xi\cup\{p\})\cap \gamma) = \gamma$ where $p$ is the standard parameter of $\R$. Let $\Phi = \Lambda_\S^{\sigma}$ and $\Phi^-=\oplus_{\xi<\lambda^{\M^\U_b}}\Phi_{\M^\U_b(\xi)}$. It's easy to see that $\Phi^-\in\Omega$. By the same proof as in the previous paragraph, $\Phi$ is $\Omega$-fullness preserving. This of course implies that $\M^\U_b$ is $\Omega$-full and $\Phi\notin \Omega$.

Now we compare $\M^\U_b$ and $\M^\U_c$. First we line up the strategies of $\M^\U_b|\delta(\U)$ and $\M^\U_c|\delta(\U)$ by iterating them into the ($\Omega$-full) hod pair construction of some $N^*_y$ (where $y$ codes $(x, \M^\U_c,$
\\$\M^\U_b)$). This can be done because the strategies of $\M^\U_b|\delta(\U)$ and of $\M^\U_c|\delta(\U)$ have branch condensation by Theorems 2.7.6 and 2.7.7 of \cite{ATHM}\footnote{We note here that suppose $(\P,\Sigma)$ is a hod pair and $\P\vDash \delta^\P$ has measurable cofinality. Then knowing that all ``lower level" strategies of all iterates of $(\P,\Sigma)$ has branch condensation does not tell us that $\Sigma$ itself has branch condensation.}. This process produces a single normal tree $\W$. Let $a = \Phi(\W)$ and $d = \Lambda_{\M^\U_c}(\W)$. Let $X = Hull^\R(\xi\cup\{p\})\cap \gamma$. Note that $(i^\W_a\circ i^\U_b)$''X $\subseteq \delta(\W)$ and $i^\W_d\circ i^\U_c$''X $\subseteq \delta(\W)$. Now continue lining up $\M^\W_a$ and $\M^\W_d$ above $\delta(\W)$ (using the same process as above). We get $\pi: \M^\W_a \rightarrow \K$ and $\tau: \M^\W_d \rightarrow \K$ (we indeed end up with the same model $\K$ by our assumption on the pair $(\Lambda,\Lambda^-)$). But then 
\begin{equation*}
(\pi\circ i^\W_a \circ i^\U_b)\textrm{''X} = (\tau\circ i^\W_d \circ i^\U_c)\textrm{''X}.
\end{equation*}
But by the fact that $(i^\W_a\circ i^\U_b)$''X $\subseteq \delta(\W)$ and $i^\W_d\circ i^\U_c$''X $\subseteq \delta(\W)$ and $\pi$ agrees with $\tau$ above $\delta(\W)$, we get
\begin{equation*}
(i^\W_a \circ i^\U_b)\textrm{''X} = (i^\W_d \circ i^\U_c)\textrm{''X}.
\end{equation*}
This gives $ran(i^\W_a) \cap ran(i^\W_d)$ is cofinal in $\delta(\W)$, which implies $a = d$. This in turns easily implies $b = c$. Contradiction.
\begin{figure}
\centering
\begin{tikzpicture}[node distance=3cm, auto]
  \node (A) {$\Q$};
  \node (B) [right of=A] {$\R$};
  \node (C) [right of=B] {$\M^{\U}_c$};
  \node (D) [right of=C] {$\M^{\W}_d$};
  \node (E) [right of=D] {$\K$};
  \node (F) [node distance=2cm, below of=C] {$\M^{\mathcal{U}}_b$};
  \node (G) [right of=F] {$\M^\W_a$};
  \node (H) [node distance=2cm, below of=F] {$\S$};
  \draw[->] (A) to node {$i$} (B);
  \draw[->] (B) to node  {$\U,c$} (C);
  \draw[->] (C) to node  {$\W,d$} (D);
  \draw[->] (D) to node {$\tau$} (E);
  \draw[->] (B) to node [swap] {$\U,b$} (F);
  \draw[->] (F) to node {$\W,a$} (G);
  \draw[->] (G) to node {$\pi$} (E);
  \draw[->] (A) to node {$j$} (H);
  \draw[->] (F) to node {$\sigma$} (H);
\end{tikzpicture}
\caption{The proof of branch condensation of $\Lambda$ in Lemma \ref{direct limit exists}}
\label{proof of branch condensation}
\end{figure}
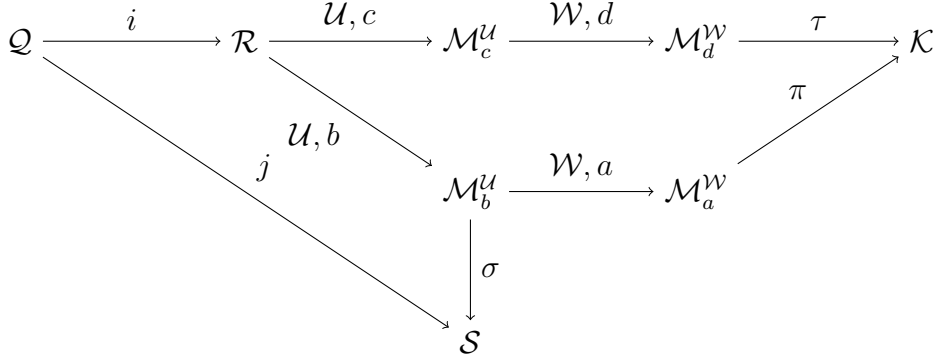
\indent Finally, let $\R$ and $\S$ be $\Lambda$-iterates of $\Q$ and let $\Lambda_\R$ and $\Lambda_\S$ be the tails of $\Lambda$ on $\R$ and $\S$ respectively. We want to show that $\R$ and $\S$ can be further iterated (using $\Lambda_\R$ and $\Lambda_\S$ respectively) to the same model. To see this, we compare $\R$ and $\S$ against the $\Omega$-full hod pair construction of some $N^{*}_y$ (for some $y$ coding $(x, \R, \S)$). Then during the comparison, only $\R$ and $\S$ move (to say $\R^*$ and $\S^*$). It's easy to see that $\R^* = \S^*$ and their strategies are the same (as the induced strategy of $N^*_y$ on its appropriate background construction).
\end{proof}
\indent By the properties of $(\Q,\Psi)$ and $\Lambda$, we get that $\rho(\mathcal{M}_\infty(\Q,\Lambda)) \leq \Theta$ and $(\textrm{HOD}|\Theta)^N = \M_\infty(\Q,\Lambda)|\Theta$. Let $k$ be the least such that $\rho_{k+1}(\Q) \leq \delta_{\lambda^\Q}$.
\\
\\
\textit{Claim. $\mathcal{M}_\infty(\Q,\Lambda) \notin N$}
\begin{proof}
Suppose not. Let $i: \Q \rightarrow \M_\infty(\Q,\Lambda)$ be the direct limit map according to $\Lambda$. By an absoluteness argument (i.e. using the absoluteness of the illfoundedness of the tree built in $N[g]$ for $g\subseteq Col(\omega, |\M_\infty(\Q,\Lambda)|)$ generic over $N$ of approximations of a embedding from $\Q$ into $\mathcal{M}_\infty(\Q,\Xi)$ extending the iteration embedding according to $\varoplus_{\beta<\lambda^\Q}\Psi_\beta$ on $\Q|\delta_{\lambda^\Q}$), we get a map $\pi$ such that
\begin{enumerate}
\item $\pi\in N$
\item $\pi: \Q \rightarrow \mathcal{M}_\infty(\Q,\Lambda)$;
\item for each $\beta < \lambda^\Q$, $\pi|Q(\beta)$ is according to $\Psi_\beta$.
\item $\pi(p) = i(p)$ where $p = p_k(\Q)$.
\end{enumerate}
This implies that $\pi = i \in N$ since $\Q$ is $\delta_{\lambda^\Q}$-sound and $\rho(\Q) \leq \delta^{\lambda^\Q}$. But this map determines $\Lambda$ in $N$ as follows: let $\mathcal{T}\in N$ be countable and be according to $\Lambda$, $N$ can build a tree searching for a cofinal branch $b$ of $\mathcal{T}$ along with an embedding $\sigma:\M^{\mathcal{T}}_b\rightarrow \M_\infty(\Q,\Lambda)$ such that $\pi = \sigma\circ i^{\mathcal{T}}_b$. Using the fact that $\Lambda$ has branch condensation, we easily get that $\Lambda \in N$. But this is a contradiction.
\end{proof}
Returning to the proof of (2), let $j =_{def} j_\mu: \textrm{HOD} \rightarrow Ult_0(\textrm{HOD},\mu)$ and $\W = j(\langle \M_\beta \ | \ \beta < \alpha, \beta \rm{ \ is\ limit}\rangle)(\alpha)$. Let $i: \M_\infty(\Q,\Lambda) \rightarrow Ult_k(\M_\infty(\Q,\Lambda), \mu)$ be the canonical map. Note that $A \notin \M_\infty(\Q,\Lambda)$. To see this, assume not, let $\R \lhd \M_\infty(\Q,\Lambda)$ be the first level $\S$ of $\M_\infty(\Q,\Lambda)$ such that $A$ is definable over $\S$. 

We claim that $\R \in N$. Recall that $\W$ is the first level of $\M_\infty^+$ such that $j(A)\cap \Theta$ is definable over $\W$. Now let 
\begin{center}
$k: \R \rightarrow Ult_0(\R,\mu) =_{def} \R^*$
\end{center}
be the $\Sigma_0$-ultrapower map. By the definition of $\W$ and $\R^*$ and the fact that they are both countably iterable, we get that $\W = \R^* \in N$. Let $p$ be the standard parameters for $\R$. In $N$, we can compute $Th_0^\R(\Theta\cup p)$ as follows: for a formula $\psi$ in the language of hod premice and $s\in \Theta^{<\omega}$,
\begin{center}
$(\psi,s)\in Th_0^\R(\Theta\cup p)\Leftrightarrow (\psi, j(s)) \in Th_0^{\R^*}(\Theta\cup k(s))$.
\end{center}
Since $Th_0^{\R^*}(\Theta\cup k(s)) = Th_0^{\W}(\Theta\cup k(s))\in N$, $j|\Theta\in N$, and $k(s)\in \W\in N$, we get $Th_0^\R(\Theta\cup p)\in N$. This shows $\R\in N$.

To get a contradiction, we show $\R \lhd \N_\infty^+$ by showing $\R$ is satisfies property (\textasteriskcentered) in $N$. Let $\K$ be a countable mouse embeddable into $\R$ by a map $k \in N$. Then we can compare $\K$ and $\Q$ against the $\Omega$-full hod pair construction of some $N^*_y$ just like in the argument on the previous page; hence we may assume $\K \lhd \Q$ ($\Q \unlhd \K$ can't happen because then $\Lambda\in N$). The minimality assumption on $\Q$ easily implies $\K \lhd Lp^{\Omega, \varoplus_{\gamma<\lambda^{\Q}}\Psi_\gamma}(\Q|\delta_{\lambda^{\Q}})$. But then $N$ can iterate $\K$ for stacks on $\K$ above $\delta_{\lambda^\Q} = \delta_{\lambda^{\K}}$, which is what we want to show. The fact that $\R \lhd \N^+_\infty$ contradicts $A \notin \N_\infty^+$.
\\
\indent Next, we note that $Ult_0(\textrm{HOD},\mu)|\Theta = Ult_k(\M_\infty(\Q,\Lambda),\mu)|\Theta$ and $i|\Theta = j|\Theta$. Let $\R = Th^{\M_\infty(\Q,\Lambda)}(\Theta \cup \{p\})$ where $p = p_k(\M_\infty(\Q,\Lambda))$ and $\S = Th^{Ult_k(\M_\infty(\Q,\Lambda), \mu)}(\Theta \cup \{i(p)\})$. We have that $\M_\alpha$ and $\S$ are sound hybrid mice in the same hierarchy, hence by countable iterability, we can conclude either $\M_\alpha \vartriangleleft \S$ or $\S \unlhd \M_\alpha$.
\\
\indent If $\M_\alpha \vartriangleleft \S$, then $\M_\alpha \in Ult_k(\M_\infty(\Q,\Lambda), \mu)$. This implies $A \in \M_\infty(\Q,\Lambda)$ by a computation similar to that in the proof of (1), i.e.
\begin{equation*}
\beta \in A \Leftrightarrow \M_\infty(\Q,\Lambda) \vDash (i|\Theta)(\beta) \in \M_\alpha.
\end{equation*}
This is a contradiction to the fact that $A \notin \M_\infty(\Q,\Lambda)$. Now suppose $\S \unlhd \M_\alpha$. This then implies $\S \in Ult_0(\textrm{HOD},\mu)$, which in turns implies $\M_\infty(\Q,\Lambda) \in \textrm{HOD}$ by the following computation: for any formula $\phi$ and $s \in \Theta^{<\omega}$,
\begin{equation*}
(\phi, s) \in \R \Leftrightarrow \textrm{HOD} \vDash (\phi, (j|\Theta)(s)) \in \S.
\end{equation*}
This is a contradiction to the claim. This completes the proof of (2).
\end{proof}
Theorem \ref{SingularMeasurable} completes our analysis of \textrm{HOD} for determinacy models of the form ``$V=L(\powerset(\mathbb{R}))$ below $``\textsf{AD}_{\mathbb{R}}+\Theta$ is regular."

\section{Questions and open problems}
\noindent \textbf{Question 1.} Assume (\textasteriskcentered) and $\Theta = \theta_{\alpha+1}$. Can one carry out the HOD analysis similar to that of Section 3?
\\
\\
The following question is also natural.
\\
\\
\noindent \textbf{Question 2.} Assume $\textsf{AD}^+ + V=L(\powerset(\mathbb{R}))$. Does \textrm{HOD} satisfy G\textsf{CH}?
\\
\\
\noindent More generally (and vaguely), we can ask whether \textrm{HOD} is a fine-structural model. As shown in \cite{ATHM} and in this paper, under (\textasteriskcentered), \textrm{HOD} is indeed a fine-structural (hybrid) model. The next natural determinacy theory to aim to understand \textrm{HOD} for seems to be the theory ``$\textsf{AD}^+ + \Theta=\theta_{\alpha+1} + \theta_\alpha$ is the largest Suslin cardinal." It's not known whether this theory is consistent (relative to large cardinals). Recent work suggests that this theory is consistent relative to a Woodin limit of Woodin cardinals. 

\bibliographystyle{plain}
\bibliography{Rmicebib}
\end{document}